\documentclass[a4paper,oneside,english]{amsart}
\usepackage[T1]{fontenc}
\usepackage[latin9]{inputenc}
\usepackage{refstyle}
\usepackage{amstext}
\usepackage{amsthm}
\usepackage{amssymb}
\usepackage{setspace}

\makeatletter


\AtBeginDocument{\providecommand\thmref[1]{\ref{thm:#1}}}
\AtBeginDocument{\providecommand\partref[1]{\ref{part:#1}}}
\AtBeginDocument{\providecommand\lemref[1]{\ref{lem:#1}}}
\AtBeginDocument{\providecommand\enuref[1]{\ref{enu:#1}}}
\AtBeginDocument{\providecommand\defref[1]{\ref{def:#1}}}

\RS@ifundefined{subsecref}
  {\newref{subsec}{name = \RSsectxt}}
  {}
\RS@ifundefined{thmref}
  {\def\RSthmtxt{theorem~}\newref{thm}{name = \RSthmtxt}}
  {}
\RS@ifundefined{lemref}
  {\def\RSlemtxt{lemma~}\newref{lem}{name = \RSlemtxt}}
  {}

\numberwithin{equation}{section}
\numberwithin{figure}{section}
\theoremstyle{plain}
\newtheorem{thm}{\protect\theoremname}[section]
  \theoremstyle{remark}
  \newtheorem{rem}[thm]{\protect\remarkname}
  \theoremstyle{plain}
  \newtheorem{lem}[thm]{\protect\lemmaname}
  \theoremstyle{definition}
  \newtheorem{defn}[thm]{\protect\definitionname}
  \theoremstyle{definition}
  \newtheorem{example}[thm]{\protect\examplename}

\usepackage{etoolbox}

\patchcmd{\subsection}{-.5em}{.5em}{}{}

\makeatother

\usepackage{babel}
  \providecommand{\definitionname}{Definition}
  \providecommand{\examplename}{Example}
  \providecommand{\lemmaname}{Lemma}
  \providecommand{\remarkname}{Remark}
\providecommand{\theoremname}{Theorem}

\begin{document}
\begin{doublespace}

\title[]{On the Woodin Construction of Failure of GCH at a Measurable Cardinal}
\end{doublespace}

\author{Yoav Ben Shalom }
\begin{abstract}
{\normalsize{}Let GCH hold and let $j:V\longrightarrow M$ be a definable
elementary embedding such that $crit(j)=\kappa$, $^{\kappa}M\subseteq M$
and $\kappa^{++}=\kappa_{M}^{++}$. H. Woodin (see \cite{key-1})
proved that there is a cofinality preserving generic extension in
which $\kappa$ is measurable and GCH fails at it. This is done by
using an Easton support iteration of Cohen forcings for blowing the
power of every inaccessible $\alpha\leq\kappa$ to $\alpha^{++}$,
and then adding another forcing on top of that. We show that it is
enough to use the iterated forcing, and that the latter forcing is
not needed. We will show this not only for the case where $\kappa^{++}=\kappa_{M}^{++}$,
but for every successor ordinal $\gamma$, where $0<\gamma<\kappa$,
we will show it when the assumption is $\kappa^{+\gamma}=\kappa_{M}^{+\gamma}$. }{\normalsize \par}
\end{abstract}

\maketitle

\section{Introduction}

H. Woodin (see \cite{key-1}) proved the following:
\begin{thm}
\label{thm:Woodin-theorem}Let GCH hold and let $j:V\longrightarrow M$
be a definable elementary embedding such that $crit(j)=\kappa$, $^{\kappa}M\subseteq M$
and $\kappa^{++}=\kappa_{M}^{++}$. Then there is a cofinality preserving
generic extension in which $\kappa$ is measurable and GCH fails at
it.
\end{thm}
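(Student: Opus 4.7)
The plan is to force with the Easton-support iteration $P = P_{\kappa+1}$ that at each inaccessible $\alpha \le \kappa$ applies $\operatorname{Add}(\alpha, \alpha^{++})$, and to write $P = P_\kappa * \dot{Q}_\kappa$ with $Q_\kappa = \operatorname{Add}(\kappa, \kappa^{++})$ as evaluated in $V[G]$. Standard Easton-style bookkeeping (chain condition for the head, closure for the tail) shows that $P$ preserves all cardinals and cofinalities and forces $2^\kappa = \kappa^{++}$, so the content of the theorem reduces to verifying that $\kappa$ remains measurable in $V[G * H]$.

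Measurability I would establish by lifting $j : V \longrightarrow M$ to an embedding $j^{*}: V[G*H] \longrightarrow M^{*}$ in a further outer model, and then showing that the derived normal ultrafilter $U = \{X \subseteq \kappa : \kappa \in j^{*}(X)\}$ already lies in $V[G*H]$. Because $P_\kappa \subseteq V_\kappa$ is fixed pointwise by $j$, we have $j(P_\kappa) = P_\kappa * \dot{P}_{\kappa, j(\kappa)}$ in $M$, and $G$ itself is already suitable for the lower factor. The first step is to construct, inside $V[G*H]$, an $M[G]$-generic $G_{1}$ for the tail $P_{\kappa, j(\kappa)}$; the hypotheses ${}^{\kappa}M \subseteq M$ and $\kappa^{++} = \kappa_M^{++}$ let me enumerate the relevant dense sets of the tail in order type $\kappa^{++}$, and the high closure of the tail permits a diagonal construction meeting all of them. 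This yields a partial lift $j^{+}: V[G] \longrightarrow M[G*G_{1}]$.

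The main obstacle, and the whole novelty of the proof over Woodin's original approach, is to lift $j^{+}$ through the top forcing $Q_\kappa$ without invoking any auxiliary forcing. The natural candidate for a master condition in $j^{+}(Q_\kappa) = \operatorname{Add}(j(\kappa), j(\kappa^{++}))^{M[G*G_{1}]}$ is $q := \bigcup\{j(p) : p \in H\}$: its domain sits in $\kappa \times j[\kappa^{++}]$, so it has cardinality $\kappa^{++} < j(\kappa)$ and is small enough to be a legal condition. The genuinely hard part is verifying that $q \in M[G*G_{1}]$. This is precisely where the assumption $\kappa^{++} = \kappa_M^{++}$ must be used in full: one has to argue that the fragment of $j$ needed to assemble $q$ from $H$ is coded by an object already available in $M$, so that $q$ can be recovered inside $M[G*G_{1}]$ from $G*G_{1}$ together with that fragment. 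Once $q$ is known to lie in $M[G*G_{1}]$, an $M[G*G_{1}]$-generic filter $H^{*}$ for $j^{+}(Q_\kappa)$ below $q$ is built by another diagonal construction of length $\kappa^{++}$, using the high closure of $j^{+}(Q_\kappa)$.

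Finally, with the full lift $j^{*}: V[G*H] \longrightarrow M[G*G_{1}*H^{*}]$ in hand, one checks that the further forcing over $V[G*H]$ which produced $G_{1}*H^{*}$ is sufficiently distributive that no new subsets of $\kappa$ are added; the derived normal measure $U$ therefore lies in $V[G*H]$, giving the measurability of $\kappa$ there and completing the proof.
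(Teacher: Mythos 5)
Your overall strategy (reduce to lifting $j$, handle the iteration below $j(\kappa)$, then handle the top Cohen forcing) is the right frame, but the step you yourself flag as ``the genuinely hard part'' is not merely hard --- it is provably impossible, and this is exactly the obstruction that shapes both Woodin's proof and this paper. The putative master condition $q=\bigcup\{j(p):p\in H\}$ is \emph{not} an element of the target model $M[G\ast G_{1}]$, hence not a condition of $Add(j(\kappa),j(\kappa^{++}))^{M[G\ast G_{1}]}$ at all, and no use of $\kappa^{++}=\kappa_{M}^{++}$ can fix this. Indeed, since each $p\in H$ has size $<\kappa=crit(j)$, one computes $dom(q)=j''\kappa^{++}\times\kappa$, so from $q$ the model $M[G\ast G_{1}]$ could recover the set $j''\kappa^{++}$. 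But $G\ast G_{1}$ is added to $M$ by a forcing that is $j(\kappa)$-c.c.\ in $M$, so any set of ordinals of size $\kappa^{++}<j(\kappa)$ in $M[G\ast G_{1}]$ is covered by some $X\in M$ with $|X|^{M}<j(\kappa)$; writing $X=j(F)(a)$ with $|F(x)|<\kappa$ for all $x\in[\kappa]^{|a|}$, one checks $X\cap j''Ord\subseteq j''\bigl(\bigcup_{x}F(x)\bigr)$, a set of $V$-size $\le\kappa$. This contradicts $|j''\kappa^{++}|=\kappa^{++}$. (The same computation appears in the paper as the remark following \lemref{build-f}.) A smaller issue: your ``diagonal construction of length $\kappa^{++}$'' for the tail generic does not go through directly either, since $M[G]$ is only closed under $\kappa$-sequences in $V[G]$ and the partial decreasing sequence need not lie in $M[G]$ at limit stages of cofinality $\kappa^{+}$; the standard repair, used here, is to factor $j=k\circ i$ through the normal ultrapower $N$, build the generic over $N$ (where only $\kappa^{+}$ many dense sets must be met), and transfer it along $k$.

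The correct order of operations is the reverse of yours: one first constructs, inside $V[G\ast g]$, a filter $H^{\ast}$ for $Add(j(\kappa),j(\kappa^{++}))^{M[G\ast g\ast H]}$ that is generic over $M[G\ast g\ast H]$ but makes no reference to $j''g$, and only afterwards forces agreement with $j''g$ by Woodin's ``surgery'': alter $\bigcup H^{\ast}$ on the coordinates in $j''\kappa^{++}\times j(\kappa)$ so as to match $j(\bigcup g)$ there. Genericity survives the alteration precisely because of the covering computation above: each condition in the target model meets $j''\kappa^{++}\times j(\kappa)$ in a set of $V$-size $\le\kappa$, which does belong to the ($\kappa$-closed) target model, so each altered condition is still a condition and dense sets are still met. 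The content of the present paper is then the remaining difficulty that your sketch does not address: producing the generic $H^{\ast}$ in the first place without the auxiliary forcing $Add(\kappa^{+},\kappa^{++})$ that Woodin uses for this purpose. That is done via \lemref{build-generic}, i.e.\ by transferring a generic for $Add(j(\kappa),j(\kappa))$ from the normal ultrapower and composing it with a bijection $f:j(\kappa)\to j(\kappa^{++})$ built so that every $\le j(\kappa)$-sized subset of $j(\kappa^{++})$ in the target model is covered by a restriction of $f$ lying in the target model --- the ``good models'' construction --- not via a master condition.
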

\begin{rem}
The hypotheses of \thmref{Woodin-theorem} can easily be had from
a cardinal $\kappa$ which is $(\kappa+2)$-strong. Work of Gitik
\cite{key-2} shows that they can be forced starting with a model
of $o(\kappa)=\kappa^{++}$, and by work of Mitchell \cite{key-3}
this is optimal.
\end{rem}
The proof starts by defining an iteration forcing $\mathbb{P}^{2}=\mathbb{P}_{\kappa+1}^{2}$
with Easton support, where for $\alpha\leq\kappa$ we let $\mathbb{Q}_{\alpha}=Add(\alpha,\alpha^{++})_{V[G_{\alpha}]}$
for inaccessible $\alpha$, and the trivial forcing otherwise. Then
we lift the embedding $j:V\longrightarrow M$ to $V[G_{\kappa+1}]$
(and an appropriate extension of $M$). In order to do so an additional
forcing is used on top of $\mathbb{P}$, which is equivalent to $Add(\kappa^{+},\kappa^{++})$.
In \partref{2} we will show it is possible to construct the embedding
without any further forcing, i.e. we will prove the following:
\begin{thm}
\label{thm:2}Let GCH hold and let $j:V\longrightarrow M$ be a definable
elementary embedding such that $crit(j)=\kappa$, $^{\kappa}M\subseteq M$
and $\kappa^{++}=\kappa_{M}^{++}$. Then it is possible to force $V$
by $\mathbb{P}^{2}$ and lift $j$ to the extension while preserving
definability already in $V^{\mathbb{P}^{2}}$.
\end{thm}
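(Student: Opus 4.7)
The plan is to force over $V$ with $\mathbb{P}^{2}_{\kappa+1}=\mathbb{P}^{2}_{\kappa}*\dot{\mathbb{Q}}_{\kappa}$, obtaining a generic $G*g$, and then to build, definably in $V[G*g]$, an $M$-generic filter $G^{*}$ for $j(\mathbb{P}^{2}_{\kappa+1})$ such that $j''(G*g)\subseteq G^{*}$. The lifted embedding is then given by $j^{+}(\dot{\tau}[G*g])=j(\dot{\tau})[G^{*}]$, and since $G^{*}$ will be built by an explicit procedure from $G$, $g$ and the definition of $j$ in $V$, the definability of $j^{+}$ inside $V^{\mathbb{P}^{2}}$ is immediate.

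The first task is to lift $j$ across $\mathbb{P}^{2}_{\kappa}$. Since $\mathrm{crit}(j)=\kappa$, the image $j(\mathbb{P}^{2}_{\kappa})$ factors in $M$ as $\mathbb{P}^{2}_{\kappa}*\dot{\mathbb{Q}}_{\kappa}*\dot{\mathbb{P}}_{\mathrm{tail}}$, where the tail iteration lives in $M[G*g]$ and begins above $\kappa^{++}$, because the next $M$-inaccessible past $\kappa$ is past $\kappa^{++}$ (using $\kappa^{++}=\kappa_{M}^{++}$). Consequently the tail is $\kappa^{++}$-closed in $M[G*g]$, and since $^{\kappa}M\subseteq M$ is preserved by the small forcing $\mathbb{P}^{2}_{\kappa+1}$ and the tail has at most $|j(\kappa^{++})|\leq\kappa^{++}$ dense sets in $M$ to be met, a standard diagonalization inside $V[G*g]$ yields an $M[G*g]$-generic filter $H_{\mathrm{tail}}$.

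The main new step is the lift across $\dot{\mathbb{Q}}_{\kappa}=\mathrm{Add}(\kappa,\kappa^{++})$. Woodin originally added a further copy of $\mathrm{Add}(\kappa^{+},\kappa^{++})$ precisely to guide the construction of a generic for $\mathrm{Add}(j(\kappa),j(\kappa)^{++})^{M[\cdots]}$ extending $\bigcup j''g$. I propose to replace this auxiliary forcing by a surgery argument: first build any $M[G*g*H_{\mathrm{tail}}]$-generic filter $H^{0}_{\kappa}$ for $\mathrm{Add}(j(\kappa),j(\kappa)^{++})^{M[\cdots]}$ inside $V[G*g]$ by a closure diagonalization, then surgically alter its values on the $\kappa^{++}$-sized set of coordinates $j''\kappa^{++}$ so that the modified filter $H_{\kappa}$ contains $j''g$ coordinatewise. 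Only $\kappa^{++}$ out of $j(\kappa)^{++}$ coordinates are touched, and on each touched coordinate only a $\kappa$-sized piece is changed.

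The main obstacle is to verify that the surgered $H_{\kappa}$ remains $M[G*g*H_{\mathrm{tail}}]$-generic. This is the heart of the argument: for every maximal antichain $A\subseteq\mathrm{Add}(j(\kappa),j(\kappa)^{++})$ in $M[\cdots]$, one needs to show that some condition already met by $H^{0}_{\kappa}$ survives the modification. The argument will exploit the $j(\kappa)$-closure of $\mathrm{Add}(j(\kappa),j(\kappa)^{++})$ together with the hypothesis $\kappa^{++}=\kappa_{M}^{++}$, which bounds $j''\kappa^{++}$ in a way that is visible to $M$ and permits a counting of the ``dangerous'' conditions in $A$. Once this genericity is established, $G^{*}=G*g*H_{\mathrm{tail}}*H_{\kappa}$ serves as the desired $M$-generic filter, and the final definability of $j^{+}$ inside $V^{\mathbb{P}^{2}}$ follows by tracing the construction through the above steps.
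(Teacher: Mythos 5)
Your overall architecture is the right one -- lift across $\mathbb{P}^{2}_{\kappa}$, then handle $\mathbb{Q}_{\kappa}$ by producing a generic for $Add(j(\kappa),j(\kappa^{++}))^{M[G*g*H]}$ and surgically modifying it to absorb $j''g$ -- but the step you dispose of in one clause, ``first build any $M[G*g*H_{\mathrm{tail}}]$-generic filter $H^{0}_{\kappa}$ \ldots by a closure diagonalization,'' is precisely the step that cannot be done this way, and it is the entire content of the theorem. The poset $Add(j(\kappa),j(\kappa^{++}))^{M[G*g*H]}$ has $|j(\kappa^{++})|=\kappa^{++}$ many maximal antichains lying in $M[G*g*H]$ when counted in $V[G*g]$, so any diagonalization must run for $\kappa^{++}$ steps; but $M[G*g*H]$ is closed only under $\kappa$-sequences from $V[G*g]$, so at a limit stage of cofinality $\kappa^{+}$ the descending sequence constructed so far need not lie in $M[G*g*H]$, its union need not be a condition, and the $j(\kappa)$-closure of the poset (which holds only inside $M[G*g*H]$) gives no lower bound. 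The same obstruction appears already for your $H_{\mathrm{tail}}$: there too one has $\kappa^{++}$ many antichains against only $\kappa$-closure of $M[G*g]$ in $V[G*g]$. For the tail this is repaired by the standard detour the paper takes: factor $j=k\circ i$ through the normal ultrapower $N$, build the tail generic over $N[G*g_{0}]$ (where the antichain count drops to $\kappa^{+}$, so the diagonalization closes off at every limit), and transfer it along $k$ using that $k$ has width $\leq\lambda$. But for the top forcing no such repair is available -- the $N$-side count is again $\kappa^{++}$ -- and this is exactly why Woodin added the auxiliary $Add(\kappa^{+},\kappa^{++})$, the forcing your proof is supposed to avoid.

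What the paper does instead, and what your proposal has no substitute for, is the following. One first obtains (by transfer from $N$) a generic $g^{*}$ for the \emph{small} poset $Add(j(\kappa),j(\kappa))^{M[G*g*H]}$, and then constructs in $V[G*g]$ a bijection $f:j(\kappa)\to j(\kappa^{++})$ -- necessarily external to $M[G*g*H]$ -- with the covering property that every $Y\in(P_{j(\kappa)^{+}}(j(\kappa^{++})))^{M[G*g*H]}$ is contained in $f''Z$ for some $Z$ with $f|_{Z}\in M[G*g*H]$. Reindexing $g^{*}$ along $f$ then yields the desired generic for the large poset, since every maximal antichain is ``seen'' through a piece of $f$ that $M[G*g*H]$ possesses. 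Building such an $f$ is where all the work lies: it requires the $\kappa^{++}$-chain of internally elementary ``good models'' of $H_{\theta}^{M[G*g*H]}$ together with the Fodor and club arguments establishing the chain's covering property. By contrast, the surgery verification that you single out as the heart of the matter is the standard part of Woodin's argument and is simply cited from the literature once the generic filter exists. As written, your proposal assumes the theorem's essential content.
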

The arguments of \thmref{2} can be generalized, so let $\mathbb{P}^{\gamma}=\mathbb{P}_{\kappa+1}^{\gamma}$
be an iteration forcing with Easton support, where for $\alpha\leq\kappa$
we let $\mathbb{Q}_{\alpha}=Add(\alpha,\alpha^{+\gamma})_{V[G_{\alpha}]}$
for inaccessible $\alpha$, and the trivial forcing otherwise. In
\partref{3} we prove that:
\begin{thm}
Let GCH hold and let $j:V\longrightarrow M$ be a definable elementary
embedding such that $crit(j)=\kappa$, $^{\kappa}M\subseteq M$ and
$\kappa^{+3}=\kappa_{M}^{+3}$. Then it is possible to force $V$
by $\mathbb{P}^{3}$ and lift $j$ to the extension while preserving
definability already in $V^{\mathbb{P}^{3}}$.
\end{thm}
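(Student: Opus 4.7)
The plan is to mimic the strategy used to prove \thmref{2}, substituting the top-stage forcing $Add(\kappa,\kappa^{+3})$ for $Add(\kappa,\kappa^{++})$ and appealing to the stronger hypothesis $\kappa^{+3}=\kappa_{M}^{+3}$ at the crucial cardinal-arithmetic step. Fix a $V$-generic $G_{\kappa}*g_{\kappa}$ for $\mathbb{P}_{\kappa}^{3}*\mathbb{Q}_{\kappa}$, where $\mathbb{Q}_{\kappa}=Add(\kappa,\kappa^{+3})_{V[G_{\kappa}]}$. Since $j$ is the identity below $\kappa$ and the hypothesis forces the stage-$\kappa$ forcing to be identical in $V[G_{\kappa}]$ and $M[G_{\kappa}]$, we have in $M$ the factorisation
\[
j(\mathbb{P}^{3})\;\cong\;\mathbb{P}_{\kappa}^{3}\,*\,\mathbb{Q}_{\kappa}\,*\,\mathbb{P}_{\mathrm{tail}}\,*\,j(\mathbb{Q}_{\kappa}),
\]
where $\mathbb{P}_{\mathrm{tail}}$ is the portion of $j(\mathbb{P}^{3})$ strictly between $\kappa$ and $j(\kappa)$. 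Lifting $j$ reduces to constructing, inside $V[G_{\kappa}*g_{\kappa}]$, a generic $H_{\mathrm{tail}}*g^{*}$ for $\mathbb{P}_{\mathrm{tail}}*j(\mathbb{Q}_{\kappa})$ over $M[G_{\kappa}*g_{\kappa}]$ such that $g^{*}$ contains the master condition $p^{*}:=\bigcup j''g_{\kappa}$. The support of $p^{*}$ has $V$-cardinality at most $\kappa^{+3}<j(\kappa)$, so $p^{*}$ is a legitimate condition in $j(\mathbb{Q}_{\kappa})$.

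I would build $H_{\mathrm{tail}}$ first. The tail is highly closed in $M[G_{\kappa}]$, since its first non-trivial stage sits at an $M$-inaccessible well above $\kappa^{+3}$; combined with $^{\kappa}M\subseteq M$, GCH in $M$, and the $\kappa^{+}$-c.c.\ of $\mathbb{P}_{\kappa}^{3}*\mathbb{Q}_{\kappa}$, this bounds the number of dense open subsets of $\mathbb{P}_{\mathrm{tail}}$ in $M[G_{\kappa}*g_{\kappa}]$ by a cardinal that can be enumerated inside $V[G_{\kappa}*g_{\kappa}]$. A standard diagonal construction then yields $H_{\mathrm{tail}}$, mirroring the analogous step in \thmref{2} with the obvious changes of cardinal arithmetic.

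The main obstacle, as at $\gamma=2$, is the construction of $g^{*}$ containing $p^{*}$ that is generic over $M[G_{\kappa}*g_{\kappa}*H_{\mathrm{tail}}]$ and definable from parameters in $V[G_{\kappa}*g_{\kappa}]$. This is precisely the step at which Woodin's original proof invoked the auxiliary forcing equivalent to $Add(\kappa^{+},\kappa^{++})$; avoiding this extra forcing is the technical content of \partref{2} at $\gamma=2$. For $\gamma=3$ one follows the same enumerate-and-descend plan: enumerate the relevant maximal antichains of $j(\mathbb{Q}_{\kappa})$ inside $V[G_{\kappa}*g_{\kappa}]$ and produce a decreasing sequence of conditions below $p^{*}$ meeting them successively. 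The hypothesis $\kappa^{+3}=\kappa_{M}^{+3}$ is used to secure such an enumeration, and the closure of $j(\mathbb{Q}_{\kappa})$ below small-support conditions---together with the observation that the support of $p^{*}$ is contained in a set of $V$-cardinality $\kappa^{+3}$---keeps the descent coherent. Verifying that this goes through without the auxiliary forcing is the delicate new ingredient over \partref{2}; in particular, the enumeration now passes through limit stages of cofinality $\kappa^{++}$ that did not arise at $\gamma=2$, and is where I expect the main difficulty to lie.

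Once $H_{\mathrm{tail}}*g^{*}$ has been constructed, $j$ extends to $\hat{j}:V[G_{\kappa+1}]\to M[G_{\kappa+1}*H_{\mathrm{tail}}*g^{*}]$, and because every step of the construction has been carried out uniformly in $V[G_{\kappa+1}]$ from the definable parameters $j$, $G_{\kappa}$, and $g_{\kappa}$, the lift $\hat{j}$ is definable in $V^{\mathbb{P}^{3}}$, as required.
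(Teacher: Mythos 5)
There is a genuine gap, and it sits exactly where you defer to ``the delicate new ingredient.'' Two concrete problems. First, your master condition $p^{*}=\bigcup j''g_{\kappa}$ is not a condition in $j(\mathbb{Q}_{\kappa})$: a condition must be an \emph{element of} $M[G*g*H]$ of size $<j(\kappa)$ \emph{there}, and $\bigcup j''g_{\kappa}$ is a set of size $\kappa^{+3}$ built from $j\restriction\kappa^{+3}$, which is not in $M$ (only $^{\kappa}M\subseteq M$ is assumed). The $V$-cardinality bound you cite is irrelevant. In the Woodin/Cummings argument the constraint $j''g\subseteq g^{*}$ is arranged \emph{after} a generic is found, by modifying it coordinatewise, not by descending below a single master condition. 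Second, the enumerate-and-descend plan for $g^{*}$ cannot succeed as stated: in $M[G*g*H]$ the forcing $Add(j(\kappa),j(\kappa^{+3}))$ has $j(\kappa^{+4})$-many maximal antichains, a set of $V$-cardinality $\kappa^{+4}$, while the closure available from outside $M$ is only $\kappa^{+}$ (descending chains of length $\le\kappa$ lie in $M[G*g*H]$ by $\kappa$-closure, and no more). This mismatch is precisely why Woodin introduced the auxiliary forcing equivalent to $Add(\kappa^{+},\kappa^{++})$; declaring that the same descent ``goes through without the auxiliary forcing'' is asserting the theorem, not proving it. The same counting problem also blocks your proposed direct diagonalization for $\mathbb{P}_{\mathrm{tail}}$ over $M[G_{\kappa}*g_{\kappa}]$.

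The paper's route is different in kind. One factors $j=k\circ i$ through the normal ultrapower $N=Ult(V,U_{\{\kappa\}})$, builds the tail generic and a generic for $Add(i(\kappa),i(\kappa))$ over $N$ (where dense sets \emph{can} be enumerated in order type $\kappa^{+}$), and transfers along $k$ to get a generic $g^{*}$ for $Add(j(\kappa),j(\kappa))^{M[G*g*H]}$. The whole problem then reduces (Lemma \ref{lem:build-generic}) to constructing in $V[G*g]$ a bijection $f:j(\kappa)\to j(\kappa^{+3})$ such that every set in $(P_{j(\kappa)^{+}}(j(\kappa^{+3})))^{M[G*g*H]}$ is covered by $f''Z$ with $f\restriction Z\in M[G*g*H]$; then $g^{*}\diamond f$ is the required generic for $Add(j(\kappa),j(\kappa^{+3}))$. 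Building such an $f$ is the actual content of Section \ref{part:3}: it requires the chain of ``very good models'' indexed by the well-ordered set $I$, the collapsing maps $t_{\alpha}$, and the Main Lemma giving the $\kappa$-covering property of the sets $S'_{i}$ — none of which appears in your sketch. The jump from $\gamma=2$ to $\gamma=3$ is not about limit stages of cofinality $\kappa^{++}$ in a descent; it is about arranging the covering property for a bijection onto $j(\kappa^{+3})$ rather than $j(\kappa^{++})$, which forces the two-coordinate index set and the composed collapses.
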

Finally, in \partref{4} we generalize for every successor ordinal
$\gamma$, where $0<\gamma<\kappa$:
\begin{thm}
Let GCH hold and let $j:V\longrightarrow M$ be a definable elementary
embedding such that $crit(j)=\kappa$, $^{\kappa}M\subseteq M$ and
$\kappa^{+\gamma}=\kappa_{M}^{+\gamma}$. Then it is possible to force
$V$ by $\mathbb{P}^{\gamma}$ and lift $j$ to the extension while
preserving definability already in $V^{\mathbb{P}^{\gamma}}$.
\end{thm}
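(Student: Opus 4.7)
The plan is to lift $j$ to $V[G_{\kappa+1}]$ by directly generalizing the arguments given in \partref{2} and \partref{3}, treating $\gamma$ as a parameter throughout the construction. The hypothesis $\kappa^{+\gamma}=\kappa_{M}^{+\gamma}$ together with $^{\kappa}M\subseteq M$ guarantees that the Cohen forcings $Add(\kappa,\kappa^{+\gamma})^{V[G_\kappa]}$ and $Add(\kappa,\kappa^{+\gamma})^{M[G_\kappa]}$ coincide, since their conditions are the same partial functions of size $<\kappa$. Writing $G_{\kappa+1}=G_\kappa * g$, the filter $g$ is therefore simultaneously $V[G_\kappa]$-generic and $M[G_\kappa]$-generic for this Cohen forcing, and we can carry out the lift in two stages: first past $\mathbb{P}_\kappa^\gamma$, then past the top factor $\mathbb{Q}_\kappa$.

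For the first stage I must construct, inside $V[G_{\kappa+1}]$, a generic $H$ for the tail $\mathbb{P}^{\gamma,M}_{(\kappa,j(\kappa))}$ over $M[G_\kappa * g]$. This tail is highly closed in $M[G_\kappa * g]$: it is $\alpha^{*}$-closed, where $\alpha^{*}>\kappa^{+\gamma}$ is the next $M$-inaccessible above $\kappa$. By elementarity and GCH, $M$ sees at most $j(\kappa)^{+\gamma}$ maximal antichains in this tail. Combining these facts with $^{\kappa}M\subseteq M$, the generic $H$ is built by a diagonal enumeration of dense sets in $V[G_{\kappa+1}]$. This reproduces the analogous step of \partref{2} and \partref{3} with $\gamma$ in place of $2$ or $3$; the closure and counting go through unchanged.

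The main obstacle is the second stage: producing, inside $V[G_{\kappa+1}]$, a generic $g^{*}$ for $j(\mathbb{Q}_\kappa)=Add(j(\kappa),j(\kappa)^{+\gamma})^{M[G_\kappa * g * H]}$ over $M[G_\kappa * g * H]$ with $j''g\subseteq g^{*}$. This is exactly what Woodin's additional forcing provides in \cite{key-1}, and what we intend to avoid. The raw image $j''g$ is a filter supported on $j''\kappa^{+\gamma}\times\kappa$, of size $\kappa^{+\gamma}$ in $V[G_{\kappa+1}]$, and must be extended to meet the $j(\kappa)^{+\gamma}$ many dense sets in $M[G_\kappa * g * H]$. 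I will adapt the surgery-style construction used for $\gamma=2$ and $\gamma=3$: enumerate these dense sets in a sequence of length $\kappa^{+\gamma+1}$ inside $V[G_{\kappa+1}]$ (using elementarity and GCH in $M$ beyond $\kappa$), and build $g^{*}$ by a coordinate-by-coordinate transfinite recursion, placing $g$ on the coordinates $j''\kappa^{+\gamma}$ and exploiting the $<j(\kappa)$-closure of $j(\mathbb{Q}_\kappa)$ in $M[G_\kappa * g * H]$ to extend at the remaining coordinates. Since $\gamma=\delta+1$ is a successor, the enumeration naturally stratifies by levels indexed by $\kappa^{+\delta}$, which lets me reuse the inductive shape of the argument in \partref{3} with $\delta$ playing the role of $2$.

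The hardest step will be verifying that this coordinate-by-coordinate construction meets every dense set without violating the anchor condition $j''g\subseteq g^{*}$. This requires a careful chain-condition analysis of $\mathbb{P}^{\gamma,M}_{(\kappa,j(\kappa))}$ in $M$ and a precise accounting of how $H$ and $g$ interact with the coordinates of $j(\mathbb{Q}_\kappa)$, including a check that the master conditions assembled from $g$ remain compatible with the conditions used to meet successive dense sets. Once $g^{*}$ is in hand, the Silver lifting criterion yields $\hat{\jmath}:V[G_{\kappa+1}]\to M[G_\kappa * g * H * g^{*}]$, and definability of $\hat{\jmath}$ in $V[G_{\kappa+1}]$ is immediate because every generic object used is defined inside $V[G_{\kappa+1}]$ from the parameters $V$, $j$ (definable in $V$), and $G_{\kappa+1}$.
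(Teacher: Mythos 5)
There is a genuine gap, and it sits exactly at the crux of the theorem. Your second stage proposes to build $g^{*}$ for $Add(j(\kappa),j(\kappa)^{+\gamma})^{M[G\ast g\ast H]}$ by a transfinite recursion of length $\kappa^{+\gamma+1}$ through an enumeration of the dense sets, ``exploiting the $<j(\kappa)$-closure of $j(\mathbb{Q}_{\kappa})$ in $M[G\ast g\ast H]$ to extend at the remaining coordinates.'' That closure is closure \emph{inside} $M[G\ast g\ast H]$: to take a lower bound of a descending chain you must know the chain is an element of $M[G\ast g\ast H]$. Since only $^{\kappa}M\subseteq M$ is available, chains built externally in $V[G\ast g]$ of length $>\kappa$ need not lie in $M[G\ast g\ast H]$, so the recursion cannot pass limit stages of cofinality $\geq\kappa^{+}$, while there are $\kappa^{+\gamma}$-many antichains to meet. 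This obstruction is precisely why Woodin adds the auxiliary forcing $Add(\kappa^{+},\kappa^{++})$, and removing it is the whole content of the paper. The actual argument does something quite different: it first obtains (by constructing over the normal ultrapower $N$ and transferring along $k$) a genuine generic $g^{*}$ for the \emph{smaller} forcing $Add(j(\kappa),j(\kappa))^{M[G\ast g\ast H]}$, and then transports its genericity to $Add(j(\kappa),j(\gamma))$ via a bijection $f:j(\kappa)\rightarrow j(\gamma)$ with the covering property of \lemref{build-generic}: every $Y$ of size $\leq j(\kappa)$ in $M[G\ast g\ast H]$ is contained in $f^{''}Z$ for some $Z$ with $f|_{Z}\in M[G\ast g\ast H]$. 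Constructing such an $f$ is the hard part, done through a well-ordered system of elementary submodels of $H_{\theta}^{M[G\ast g\ast H]}$ (the ``great models'' indexed by the finite-support product $I$) and the covering lemma \lemref{great-main}. Your proposal contains no substitute for this mechanism; note also that you put the weight on the anchor condition $j^{''}g\subseteq g^{*}$, which is the comparatively routine surgery step the paper simply delegates to \cite{key-1}.

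A secondary but real problem is your first stage: you build the tail generic $H$ over $M[G_{\kappa}\ast g]$ directly by ``diagonal enumeration of dense sets in $V[G_{\kappa+1}]$.'' The same obstruction applies: there are roughly $\kappa^{+\gamma}$-many maximal antichains of the tail as seen from $V$, but $^{\kappa}M\subseteq M$ only lets you keep chains of length $\kappa$ inside $M[G_{\kappa}\ast g]$, so the diagonalization cannot be closed off. The correct route (which the paper takes in its Preliminaries) is to build $H_{0}$ over $N[G\ast g_{0}]$, where the antichain count drops to $\kappa^{+}$, and then transfer $H_{0}$ to $H$ along $k$ using the fact that $k$ has width $\leq\lambda$. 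Finally, be aware that the general case is not a verbatim rerun of \partref{3} with ``$\delta$ playing the role of $2$'': the index set becomes a finite-support product over all $M$-cardinals in $[j(\kappa^{+}),j(\gamma)]$ with composed collapsing maps $T_{l}^{i}$, and the hypothesis $\gamma<\kappa^{+\kappa}$ is used essentially (e.g.\ in \lemref{cardinal-form}) to control the cardinals of $M$ in that interval.
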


\section*{Acknowledgments}

I would like to express my sincere gratitude to Prof. Moti Gitik for
his guidance, insights and encouragement throughout this research.
Without him this would not have been possible.

\section{Preliminaries}

\subsection{Ultrapowers and Extenders}

We will start with the same constructions as in Woodin's arguments.
For $a\in[\kappa^{+\gamma}]^{<\omega}$ Define $U_{a}=\{X\subseteq[\kappa]^{|a|}:a\in j(X)\}$
and let $i_{a}$ be the ultrapower map from $V$ to $N_{a}\simeq Ult(V,U_{a})$.
Notice that if we let $k_{a}:N\longrightarrow M$ be $k_{a}([F]_{U_{a}})=j(F)(a)$
then $j=k_{a}\circ i_{a}$.
\begin{lem}
$k$ is an elementary embedding.
\end{lem}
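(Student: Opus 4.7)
The plan is a direct verification combining {\L}o\'s's theorem for the ultrapower $N_a$ with the elementarity of $j:V\longrightarrow M$. I would proceed in three short steps.

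First, I would note that $U_a$ really is a $\kappa$-complete $V$-ultrafilter on $[\kappa]^{|a|}$: the filter and ultra properties are immediate from the fact that, for any $X\subseteq[\kappa]^{|a|}$, either $a\in j(X)$ or $a\in j([\kappa]^{|a|}\setminus X)$, and $\kappa$-completeness follows because $crit(j)=\kappa$ commutes with intersections of fewer than $\kappa$ many sets through $j$. This legitimizes {\L}o\'s's theorem for $N_a\simeq\mathrm{Ult}(V,U_a)$.

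Next I would verify that $k_a$ is well-defined on equivalence classes. If $[F]_{U_a}=[G]_{U_a}$, then the set $A:=\{x\in[\kappa]^{|a|}:F(x)=G(x)\}$ lies in $U_a$, so $a\in j(A)$; by elementarity $j(A)=\{y:j(F)(y)=j(G)(y)\}$, and hence $j(F)(a)=j(G)(a)$.

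For elementarity itself, given a formula $\varphi$ and representatives $F_1,\dots,F_n$, I would chain the equivalences
\begin{align*}
N_a\models\varphi([F_1]_{U_a},\dots,[F_n]_{U_a})
 &\iff \{x:\varphi(F_1(x),\dots,F_n(x))\}\in U_a\\
 &\iff a\in j\bigl(\{x:\varphi(F_1(x),\dots,F_n(x))\}\bigr)\\
 &\iff a\in\{y:\varphi(j(F_1)(y),\dots,j(F_n)(y))\}\\
 &\iff M\models\varphi(k_a([F_1]_{U_a}),\dots,k_a([F_n]_{U_a})),
\end{align*}
invoking {\L}o\'s, the definition of $U_a$, the elementarity of $j$, and the definition of $k_a$, in that order.

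I do not anticipate any real obstacle here: this is the standard computation showing that the factor map out of a seed ultrapower into the target of $j$ is elementary. The only point of care is to apply {\L}o\'s inside $V$ (for $N_a$) and the elementarity of $j$ in the correct direction; once both are in hand, the chain of equivalences is entirely mechanical. The identity $j=k_a\circ i_a$ mentioned just above the lemma falls out of the same computation applied to constant functions, and is not needed as an input for elementarity.
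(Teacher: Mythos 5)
Your proposal is correct and follows essentially the same route as the paper: apply {\L}o\'s's theorem in $N_a$, translate membership in $U_a$ into $a\in j(\cdot)$, and use the elementarity of $j$ to land in $M$. The only difference is cosmetic — you spell out the well-definedness of $k_a$ and the ultrafilter properties of $U_a$, and you state the chain as a biconditional where the paper argues one direction (which suffices since $U_a$ decides every set) — so no further comment is needed.
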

\begin{proof}
Let $\phi(x_{1},\ldots,x_{n})$ be a formula and $[f_{1}]_{U_{a}},\ldots,[f_{n}]_{U_{a}}\in N_{a}$
parameters such that $N_{a}\models\phi([f_{1}]_{U_{a}},\ldots,[f_{n}]_{U_{a}})$.
Then from Los's theorem $\{x\in[\kappa]^{|a|}|V\models\phi(f_{1}(x),\ldots,f_{n}(x))\}\in U_{a}$.
From the definition of $U_{a}$ this means that:
\begin{align*}
a & \in j(\{x\in[\kappa]^{|a|}|V\models\phi(f_{1}(x),\ldots,f_{n}(x))\})=\\
 & =\{x\in j([\kappa]^{|a|})|M\models\phi(j(f_{1})(x),\ldots,j(f_{n})(x))\}
\end{align*}
 So if $a$ is in this set, we can substitute it as $x$ and get $M\models\phi(j(f_{1})(a),\ldots,j(f_{n})(a))=\phi(k([f_{1}]_{U_{a}}),\ldots,k([f_{n}]_{U_{a}}))$.
\end{proof}
\begin{lem}
\label{lem:unify-indexes}For every $a_{1},\ldots,a_{n}\in[\kappa^{+\gamma}]^{<\omega}$
there is some $b\in[\kappa^{+\gamma}]^{<\omega}$ such that $\forall i\in[n]:k_{a_{i}}^{''}N_{a_{i}}\subseteq k_{b}^{''}N_{b}$.
\end{lem}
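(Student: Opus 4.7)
The natural candidate for $b$ is the union $b=a_{1}\cup\cdots\cup a_{n}\in[\kappa^{+\gamma}]^{<\omega}$, and my plan is to show that $k_{a_{i}}''N_{a_{i}}\subseteq k_{b}''N_{b}$ via the standard projection trick from extender theory. An arbitrary element of $k_{a_{i}}''N_{a_{i}}$ has the form $k_{a_{i}}([f]_{U_{a_{i}}})=j(f)(a_{i})$ for some $f:[\kappa]^{|a_{i}|}\longrightarrow V$ in $V$, so the task is to manufacture, from $f$, a function $g:[\kappa]^{|b|}\longrightarrow V$ with $j(g)(b)=j(f)(a_{i})$.

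First, since $a_{i}\subseteq b$ and both are finite subsets of ordinals, the inclusion is witnessed by a finite increasing sequence of indices $s_{i}=\langle t_{1}<\cdots<t_{|a_{i}|}\rangle$ below $|b|$ specifying the positions occupied by the members of $a_{i}$ inside the increasing enumeration of $b$. Define $\pi_{i}:[\mathrm{Ord}]^{|b|}\longrightarrow[\mathrm{Ord}]^{|a_{i}|}$ by sending an increasing tuple $\langle x_{1},\ldots,x_{|b|}\rangle$ to $\langle x_{t_{1}},\ldots,x_{t_{|a_{i}|}}\rangle$, and set $g:=f\circ\pi_{i}$.

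The second step is a direct computation using elementarity: $k_{b}([g]_{U_{b}})=j(g)(b)=j(f)(j(\pi_{i})(b))$. Because $\pi_{i}$ is coded by the finite sequence $s_{i}$ of natural numbers, which is fixed pointwise by $j$, the function $j(\pi_{i})$ is defined in $M$ by the same index sequence, so $j(\pi_{i})(b)$ is simply the sub-tuple of $b$ at positions $s_{i}$, which is $a_{i}$ by the choice of $s_{i}$. Hence $k_{b}([g]_{U_{b}})=j(f)(a_{i})=k_{a_{i}}([f]_{U_{a_{i}}})$, giving the required inclusion for each $i$.

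I do not foresee a genuine obstacle here; this is the usual directedness of the extender system indexed by finite subsets of $\kappa^{+\gamma}$. The only point that requires a moment of care is the verification that $j(\pi_{i})(b)=a_{i}$, which hinges on $\pi_{i}$ being definable from parameters in $V_{\omega}$ so that $j$ acts trivially on its code.
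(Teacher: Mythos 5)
Your proposal is correct and is essentially the paper's own argument: both take for $b$ a finite set containing all the $a_i$ and realize each $k_{a_i}([f]_{U_{a_i}})$ as $k_b([f\circ\pi]_{U_b})$ for a suitable coordinate projection $\pi$, using $j(f\circ\pi)(b)=j(f)(a_i)$. The only (cosmetic) difference is that you use the union of the $a_i$ and handle all $n$ at once, whereas the paper reduces to $n=2$ by induction and uses concatenation with first/last-coordinate projections.
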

\begin{proof}
If we can prove this for $n=2$, it is trivial to extend it by induction
for any $n$. So let us assume $n=2$. Take $b=a_{1}\smallfrown a_{2}$,
where $\smallfrown$ is concatenation. For $a_{1}$, assume $[f_{1}]_{U_{a_{1}}}\in N_{a_{1}}$.
Denote $h_{a_{1}}:[\kappa]^{|a_{1}|+|a_{2}|}\longrightarrow[\kappa]^{|a_{1}|}$
to be the projection map which for $x\in[\kappa]^{|a_{1}|+|a_{2}|}$
takes the first $|a_{1}|$ coordinates. Then notice that:
\[
k_{a_{1}}([f_{1}]_{U_{a_{1}}})=j(f_{1})(a_{1})
\]
\[
k_{b}([f_{1}\circ h_{a_{1}}]_{U_{b}})=j(f_{1}\circ h_{a_{1}})(b)=j(f_{1}\circ h_{a_{1}})(a_{1}\smallfrown a_{2})=j(f_{1})(a_{1})
\]
 Which means that $k_{a_{1}}^{''}N_{a_{1}}\subseteq k_{b}^{''}N_{b}$.
The case for $a_{2}$ is similar (taking $h_{a_{2}}$ which takes
the last $|a_{2}|$ coordinates) which concludes the proof.
\end{proof}
\begin{defn}
The definable elementary embedding $j$ is said to be $j_{E}^{V}$
for $E$ a $(\kappa,\kappa^{+\gamma})$-extender if $crit(j)=\kappa$,
$j(\kappa)>\kappa^{+\gamma}$ and every element of $M$ is of the
form $k(F)(a)$ for some $a\in M$ and function $F\in V$, where $a\in[\kappa^{+\gamma}]^{<\omega}$
and $dom(F)=[\kappa]^{|a|}$.
\end{defn}
\begin{lem}
We may assume that $j=j_{E}^{V}$ for some $(\kappa,\kappa^{+\gamma})$-extender
E.
\end{lem}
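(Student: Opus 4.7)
The plan is to derive from $j$ a $(\kappa,\kappa^{+\gamma})$-extender $E$ and replace $j$ with the corresponding extender ultrapower embedding. Concretely, set $E=\langle U_{a}:a\in[\kappa^{+\gamma}]^{<\omega}\rangle$, where the $U_{a}$ are the measures already introduced in the excerpt. The compatibility exploited in the proof of \lemref{unify-indexes} (via the projection maps $h_{a}$) naturally yields commuting factor maps $\pi_{a,b}:N_{a}\longrightarrow N_{b}$ for $a\subseteq b$, so $M_{E}$ can be formed as the direct limit of the system $\{N_{a},\pi_{a,b}\}$ with canonical map $j_{E}:V\longrightarrow M_{E}$. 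The universal property of the direct limit then produces a factor $k:M_{E}\longrightarrow M$ satisfying $k\circ j_{E}=j$ and agreeing with each $k_{a}$ on the image of $N_{a}$ in $M_{E}$.

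I would then verify, for $j_{E}$ and $M_{E}$, both the defining clauses of $j_{E}^{V}$ and the running hypotheses of the lemma, namely $^{\kappa}M_{E}\subseteq M_{E}$ and $\kappa^{+\gamma}=(\kappa^{+\gamma})^{M_{E}}$. Definability of $j_{E}$, $\mathrm{crit}(j_{E})=\kappa$, and the representation of every element of $M_{E}$ as $j_{E}(F)(a)$ are immediate from the construction. The crucial intermediate fact is that $k\upharpoonright\kappa^{+\gamma}$ is the identity: for each $\xi<\kappa^{+\gamma}$ let $\xi^{*}$ be the element of $M_{E}$ represented by the identity function modulo $U_{\{\xi\}}$, so that $k(\xi^{*})=j(\mathrm{id})(\xi)=\xi$; assuming a hypothetical $\mu=\mathrm{crit}(k)<\kappa^{+\gamma}$ and case-splitting on whether $\mu^{*}<\mu$, $\mu^{*}=\mu$, or $\mu^{*}>\mu$ yields in each case a contradiction with $k$ being order preserving and $k(\mu^{*})=\mu<k(\mu)$. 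Once $k$ fixes $\kappa^{+\gamma}$ pointwise, elementarity transfers $(\kappa^{+\gamma})^{M}=\kappa^{+\gamma}$ to $M_{E}$, and the identity $k(j_{E}(\kappa))=j(\kappa)>\kappa^{+\gamma}$ combined with $k\upharpoonright\kappa^{+\gamma}=\mathrm{id}$ forces $j_{E}(\kappa)>\kappa^{+\gamma}$.

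The principal obstacle is the closure $^{\kappa}M_{E}\subseteq M_{E}$. Given $\langle x_{\alpha}:\alpha<\kappa\rangle\in V$ with each $x_{\alpha}\in M_{E}$, represent $x_{\alpha}=j_{E}(F_{\alpha})(a_{\alpha})$ for some $F_{\alpha}\in V$ and $a_{\alpha}\in[\kappa^{+\gamma}]^{<\omega}$. Using GCH in $V$ to enumerate all admissible pairs $(F,a)$ by a sequence indexed below $\kappa^{+\gamma}$, one bundles the whole sequence into a single function $F^{*}\in V$ and a single finite support $a^{*}\in[\kappa^{+\gamma}]^{<\omega}$ so that $j_{E}(F^{*})(a^{*})=\langle x_{\alpha}:\alpha<\kappa\rangle$ inside $M_{E}$; the hypothesis $^{\kappa}M\subseteq M$ enters, through the factor $k$, to guarantee that the decoding is internal to $M_{E}$. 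I expect this bundling step, rather than the ordinal computations of the previous step, to be the main technical point, although it is standard in extender theory and does not require new ideas.
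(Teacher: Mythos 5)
Your construction is, at bottom, the same as the paper's: the direct limit of the $N_{a}$ along the projection maps implicit in \lemref{unify-indexes} is exactly the transitive collapse of the paper's $M'=\{j(F)(a)\mid F\in V,\ a\in[\kappa^{+\gamma}]^{<\omega}\}$, and where you invoke the universal property of the direct limit the paper instead runs the Tarski--Vaught test on $M'$ inside $M$, using \lemref{unify-indexes} to put all parameters over a common index $b$. You go further than the paper in explicitly checking that the replacement preserves $\kappa^{+\gamma}=\kappa_{M}^{+\gamma}$ and $^{\kappa}M\subseteq M$; the paper leaves both tacit even though they are needed later, and your argument that $k$ fixes $\kappa^{+\gamma}$ pointwise is correct. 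One point in your closure sketch is wrong as literally stated: a $\kappa$-sequence $\langle x_{\alpha}\mid\alpha<\kappa\rangle$ with $x_{\alpha}=j_{E}(F_{\alpha})(a_{\alpha})$ cannot in general be written as $j_{E}(F^{*})(a^{*})$ for a single \emph{finite} $a^{*}$, since $\bigcup_{\alpha}a_{\alpha}$ has size $\kappa$ and no finite subset of $\kappa^{+\gamma}$ generates it. The standard repair is to show that $\langle a_{\alpha}\mid\alpha<\kappa\rangle$ itself lies in $M_{E}$ (for instance by showing $j_{E}\upharpoonright d\in M_{E}$ for $d=\bigcup_{\alpha}a_{\alpha}$, using $^{\kappa}M\subseteq M$ together with the fact that $crit(k)\geq\kappa^{+\gamma}$), and then to compute $\langle x_{\alpha}\mid\alpha<\kappa\rangle$ inside $M_{E}$ from this together with $j_{E}(\langle F_{\alpha}\mid\alpha<\kappa\rangle)\upharpoonright\kappa$; this is the standard argument you allude to, but it is not a compression into one finite support.
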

\begin{proof}
Take $M'=\{j(F)(a)|F\in V,F:\kappa\longrightarrow V,a<\kappa^{+\gamma}\}$.
Notice that $Rng(j)\subseteq M'$. Let us show that $M'$ is an elementary
submodel of $M$. Notice that if we show that, then for every formula
$\phi(x_{1},\ldots,x_{n})$ and parameters $a_{1},\ldots,a_{n}\in V$
we will have $V\models\phi(a_{1},\ldots,a_{n})$ iff $M\models\phi(j(a_{1}),\ldots,j(a_{n}))$
iff $M'\models\phi(j(a_{1}),\ldots,j(a_{n}))$, so it will end the
proof. We will use Tarski-Vaught test, so let $\phi(x,y_{1},\ldots,y_{n})$
be some first-order formula, and $j(f_{1})(b_{1}),\ldots,j(f_{n})(b_{n})\in M'$
parameters such that $M\models\exists x:\phi(x,j(f_{1})(b_{1}),\ldots,j(f_{n})(b_{n}))$.
From \lemref{unify-indexes} we can assume that $b_{1}=\ldots=b_{n}=b$.
So we can re-write the formula as $M\models\exists x:\phi(x,k_{b}([f_{1}]_{U_{b}}),\ldots,k_{b}([f_{n}]_{U_{b}}))$,
and the using elementarity get $N_{b}\models\exists x:\phi(x,[f_{1}]_{U_{b}},\ldots,[f_{n}]_{U_{b}})$.
Denote by $[f]_{U_{b}}$ such $x$. Then $N_{b}\models\exists x:\phi([f]_{U_{b}},[f_{1}]_{U_{b}},\ldots,[f_{n}]_{U_{b}})$,
so from elementarity $M\models\exists x:\phi(k_{b}([f]_{U_{b}}),k_{b}([f_{1}]_{U_{b}}),\ldots,k_{b}([f_{n}]_{U_{b}}))=\phi(j(f)(b),j(f_{1})(b),\ldots,j(f_{n})(b))$.
From the definition $j(f)(b)\in M'$, as we wanted.
\end{proof}

\subsection{Initial Forcing}

Let $\lambda=\kappa_{N}^{+\gamma}$. Let $G$ be a generic filter
for $\mathbb{P}_{\kappa}$ over $V$ and let $g$ be a generic filter
for $\mathbb{Q}_{\kappa}$ over $V[G]$. Since $^{\kappa}M\subseteq M$
and also $^{\kappa}N\subseteq N$ (since it is the ultrapower by a
measure over $\kappa$) the iterations $\mathbb{P}$, $i(\mathbb{P})$,
$j(\mathbb{P})$ agree up to stage $\kappa$ (so we can use $G$ for
the first $\kappa$ iterations in all of them).
\begin{lem}
$j(\mathbb{P})_{\kappa+1}=\mathbb{P}_{\kappa+1}$
\end{lem}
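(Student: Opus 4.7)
My plan is to split the claim into two parts: the agreement of the first $\kappa$ stages of the iterations $\mathbb{P}$ and $j(\mathbb{P})$, and the agreement of the stage-$\kappa$ factors. Once both are in place, the equality $\mathbb{P}_{\kappa+1}=j(\mathbb{P})_{\kappa+1}$ follows immediately from the definition of an iteration.

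First I would argue by induction on $\alpha\le\kappa$ that $\mathbb{P}_{\alpha}=j(\mathbb{P})_{\alpha}$. The key ingredient is that $^{\kappa}M\subseteq M$ gives $V_{\kappa}=V_{\kappa}^{M}$, so for any $\alpha<\kappa$ the models $V$ and $M$ agree on whether $\alpha$ is inaccessible and, when it is, on the value of $\alpha^{+\gamma}$ (which lies below $\kappa$ because $\gamma<\kappa$). Hence the recursive definitions of $\mathbb{Q}_{\alpha}$ in $V[G_{\alpha}]$ and $M[G_{\alpha}]$ produce the same factor; at limit stages the Easton support is a set of ordinals below $\alpha$ and hence lies in the common $V_{\kappa}=V_{\kappa}^{M}$, so the direct/inverse limit formation is absolute between the two iterations. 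This closes the induction through stage $\kappa$.

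Next I would compare the factor $\mathbb{Q}_{\kappa}$. In $V$ this is $Add(\kappa,\kappa^{+\gamma})^{V[G_{\kappa}]}$, while by elementarity the stage-$\kappa$ factor of $j(\mathbb{P})$ is $Add(\kappa,\kappa_{M}^{+\gamma})^{M[G_{\kappa}]}$; note that $M$ sees $\kappa$ as inaccessible because $V_{\kappa}=V_{\kappa}^{M}$. The two indices coincide by the hypothesis $\kappa^{+\gamma}=\kappa_{M}^{+\gamma}$. A condition of $Add(\kappa,\kappa^{+\gamma})$ is simply a partial function $\kappa\times\kappa^{+\gamma}\to 2$ of size $<\kappa$, so to identify the two posets it suffices to show that $V[G_{\kappa}]$ and $M[G_{\kappa}]$ have the same such partial functions.

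The main obstacle will be preserving the closure: I need $^{\kappa}M[G_{\kappa}]\subseteq M[G_{\kappa}]$ inside $V[G_{\kappa}]$. This should follow by a standard argument using that $\mathbb{P}_{\kappa}$ has cardinality $\kappa$ and is $\kappa^{+}$-cc under GCH, so any $\mathbb{P}_{\kappa}$-name for a $\kappa$-sequence of ordinals can be refined to a name of size $\kappa$, and such a name is coded by a $\kappa$-sequence of elements of $\mathbb{P}_{\kappa}\in M$, which lies in $M$ by $^{\kappa}M\subseteq M$. Once this closure is established, every $<\kappa$-sized partial function from $\kappa\times\kappa^{+\gamma}$ to $2$ in $V[G_{\kappa}]$ already belongs to $M[G_{\kappa}]$, and conversely $M[G_{\kappa}]\subseteq V[G_{\kappa}]$, so the two posets are literally equal and therefore $\mathbb{P}_{\kappa+1}=j(\mathbb{P})_{\kappa+1}$.
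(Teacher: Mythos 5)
Your proposal is correct and follows essentially the same route as the paper: agreement of the first $\kappa$ stages from the closure of $M$, elementarity to compute the stage-$\kappa$ factor as $Add(\kappa,\kappa_{M}^{+\gamma})^{M[G]}$, the hypothesis $\kappa^{+\gamma}=\kappa_{M}^{+\gamma}$ to match the indices, and $^{\kappa}M[G]\subseteq M[G]$ (which the paper just cites from Cummings, Handbook 8.4, where you sketch the chain-condition argument) to see that the two models have the same $<\kappa$-sized conditions.
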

\begin{proof}
We have seen that $\mathbb{P}$, $j(\mathbb{P})$ agree up to stage
$\kappa$, so it is left to see they agree for stage $\kappa$. From
elementarity $\mathbb{Q}_{\kappa}^{M[G]}=Add(\kappa,\kappa^{+\gamma})_{M[G]}$.
But $\kappa_{M}^{+\gamma}=\kappa^{+\gamma}$, and since all the elements
are of cardinality $<\kappa$ (and we have $V\models{}^{\kappa}M\subseteq M$
so from \cite{key-1} (8.4) we have $V[G]\models{}^{\kappa}M[G]\subseteq M[G]$)
this is simply $Add(\kappa,\kappa^{+\gamma})_{V[G]}=\mathbb{Q}_{\kappa}$.
\end{proof}
\begin{lem}
$i(\mathbb{P})_{\kappa+1}=\mathbb{P}_{\kappa}\ast\dot{\mathbb{Q}}_{\kappa}^{*}$,
where $\mathbb{Q}_{\kappa}^{*}=Add(\kappa,\lambda)_{V[G]}$.
\end{lem}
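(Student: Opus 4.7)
The plan is to analyze the iteration $i(\mathbb{P})$ stage-by-stage on its first $\kappa+1$ coordinates. For the first $\kappa$ stages, the agreement $i(\mathbb{P})_{\kappa}=\mathbb{P}_{\kappa}$ is already noted in the preceding paragraph: since $crit(i)=\kappa$ and $V$, $N$ agree on which $\alpha<\kappa$ are inaccessible (and agree on the relevant $V_\alpha$-initial segments), the two Easton iterations are defined by the same recipe applied to the same data, and the generic $G$ serves for both.

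The real content is at stage $\kappa$. Since $crit(i)=\kappa$, $\kappa$ is inaccessible in $N$, so by the elementarity of $i:V\to N$ the $\kappa$-th forcing of $i(\mathbb{P})$ is $Add(\kappa,(\kappa^{+\gamma})^{N})^{N[G]}$. But $(\kappa^{+\gamma})^{N}=\kappa_N^{+\gamma}=\lambda$ by the very definition of $\lambda$, so this forcing is $Add(\kappa,\lambda)^{N[G]}$. It then remains to prove that $Add(\kappa,\lambda)^{N[G]}=Add(\kappa,\lambda)^{V[G]}$ as partial orders.

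The underlying set in either model consists of the partial functions $p:\kappa\times\lambda\to 2$ of size less than $\kappa$, and the ordering is reverse inclusion, so the claim reduces to showing that $V[G]$ and $N[G]$ have the same such partial functions. This is a standard closure argument: the hypothesis $V\models{}^{\kappa}N\subseteq N$ is preserved by the preparatory forcing $\mathbb{P}_\kappa$ via \cite{key-1} (8.4), exactly as already used for $M$, giving $V[G]\models{}^{\kappa}N[G]\subseteq N[G]$. Any $p\in Add(\kappa,\lambda)^{V[G]}$ is coded by a single function from some $\alpha<\kappa$ into $\kappa\times\lambda\times 2$, so $p\in N[G]$; the reverse inclusion $N[G]\subseteq V[G]$ is trivial. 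Combining this absoluteness with the analysis of the first $\kappa$ stages yields $i(\mathbb{P})_{\kappa+1}=\mathbb{P}_{\kappa}\ast\dot{\mathbb{Q}}_{\kappa}^{*}$.

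I do not anticipate a serious obstacle here; the only step requiring any care is the preservation of $\kappa$-closure of $N$ through $\mathbb{P}_\kappa$, and this is handled by the same lemma from \cite{key-1} already invoked in the proof of the preceding lemma.
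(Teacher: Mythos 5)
Your proposal is correct and follows essentially the same route as the paper: agreement below $\kappa$ from the preceding discussion, elementarity of $i$ to identify the stage-$\kappa$ forcing as $Add(\kappa,\kappa_N^{+\gamma})^{N[G]}=Add(\kappa,\lambda)^{N[G]}$, and then the closure $V[G]\models{}^{\kappa}N[G]\subseteq N[G]$ (via Cummings (8.4)) together with the fact that conditions have size $<\kappa$ to conclude $Add(\kappa,\lambda)^{N[G]}=Add(\kappa,\lambda)^{V[G]}$. No gaps.
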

\begin{proof}
Again, We have seen that $\mathbb{P}$, $i(\mathbb{P})$ agree up
to stage $\kappa$, so it is left to check it for stage $\kappa$.
From elementarity $\mathbb{Q}_{\kappa}^{N[G]}=Add(\kappa,\kappa^{+\gamma})_{N[G]}$.
But $\kappa_{N}^{+\gamma}=\lambda$, and since all the elements are
of cardinality $<\kappa$ (and we have $V\models{}^{\kappa}N\subseteq N$
so from \cite{key-1} (8.4) we have $V[G]\models{}^{\kappa}N[G]\subseteq N[G]$)
this is simply $Add(\kappa,\lambda)_{V[G]}$.
\end{proof}
\begin{lem}
$crit(k)\geq\kappa^{+}$.
\end{lem}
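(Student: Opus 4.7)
The plan is to prove $\mathrm{crit}(k)\ge\kappa^{+}$ by transfinite induction, showing $k(\alpha)=\alpha$ for every $\alpha<\kappa^{+}$.

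First I would handle the base cases $\alpha\le\kappa$. For $\alpha<\kappa$, both $i$ and $j$ fix $\alpha$ (each has critical point $\kappa$), so $k(\alpha)=k(i(\alpha))=j(\alpha)=\alpha$. For $\alpha=\kappa$, I would use that $N$ is the ultrapower by the normal ultrafilter $U=U_{\{\kappa\}}=\{X\subseteq\kappa:\kappa\in j(X)\}$; by the standard normality property, the identity function on $\kappa$ represents $\kappa$, i.e.\ $\kappa=[\mathrm{id}]_{U}$ in $N$. Hence $k(\kappa)=k([\mathrm{id}]_{U})=j(\mathrm{id})(\kappa)=\kappa$, which already forces $\mathrm{crit}(k)>\kappa$.

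For the inductive step with $\kappa<\alpha<\kappa^{+}$ the successor case is immediate from elementarity, since $k(\beta+1)=k(\beta)+1$. For a limit $\alpha$ I would appeal to continuity. Since GCH holds in $V$ and $\alpha<\kappa^{+}$, there is a bijection $f\colon\kappa\to\alpha$ in $V$; because ${}^{\kappa}N\subseteq N$ (already noted in the excerpt), $f$ lies in $N$, so $|\alpha|^{N}\le\kappa$ and consequently $\mathrm{cof}^{N}(\alpha)\le\kappa<\mathrm{crit}(k)$ by the base case. This makes $k$ continuous at $\alpha$, so $k(\alpha)=\sup k^{''}\alpha$. The induction hypothesis yields $k^{''}\alpha\subseteq\alpha$, giving $k(\alpha)\le\alpha$, while the reverse inequality is automatic for any elementary embedding acting on ordinals; so $k(\alpha)=\alpha$.

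The only step requiring real work is the base case $k(\kappa)=\kappa$, which rests on the normality of $U$. Once this is in hand, the limit step is a routine cofinality/continuity argument exploiting the closure ${}^{\kappa}N\subseteq N$ already established in the preceding lemmas.
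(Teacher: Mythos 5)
Your proposal is correct and its key step is exactly the paper's: $k(\kappa)=k([\mathrm{id}_{\kappa}]_{U})=j(\mathrm{id}_{\kappa})(\kappa)=\kappa$ via normality of the derived measure, together with $k\upharpoonright\kappa=\mathrm{id}$. The paper's displayed proof actually stops there (the remainder of that proof block is devoted to lifting $k$ and building $H_{0}$, not to the lemma), so your continuity/cofinality argument for $\kappa<\alpha<\kappa^{+}$ --- using ${}^{\kappa}N\subseteq N$ to see $\mathrm{cof}^{N}(\alpha)\leq\kappa<\mathrm{crit}(k)$ --- correctly supplies the step the paper leaves implicit (note GCH is not needed there, only $|\alpha|=\kappa$).
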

\begin{proof}
For $\alpha<\kappa$ it is clear that $k(\alpha)=\alpha$. For $\kappa$
we have $\kappa\leq k(\kappa)\leq k([Id_{\kappa}]_{U})=j(Id_{\kappa})(\kappa)=\kappa$

It is easy to see that $k^{''}G=G$, so we can lift $k$ to get $k:N[G]\longrightarrow M[G]$.
Use \cite{key-1} (15.6) to get $g_{0}$, a generic filter for $\mathbb{Q}_{\kappa}^{*}$
over $N[G]$ such that $k^{''}g_{0}\subseteq g$. This means we can
again lift $k$ to get $k:N[G\ast g_{0}]\longrightarrow M[G\ast g]$
Let $\mathbb{R}_{0}=G_{\kappa+1,i(\kappa)}^{N}$ be the factor forcing
to prolong $G\ast g_{0}$ to a generic filter for $i(\mathbb{P}_{\kappa})$.
Then from \cite{key-1} (8.1) we can build $H_{0}\in V[G\ast g_{0}]$
which is a generic filter for $\mathbb{R}_{0}$ over $N[G\ast g_{0}]$.
\end{proof}
\begin{defn}
The ``width'' of the elementary embedding $k$ is said to be $\leq\mu$
iff every element of $M$ is of the form $k(F)(a)$ for some function
$f\in N$ and $a\in M$, where $N\models|dom(F)|\leq\mu$.
\end{defn}
\begin{lem}
$k$ is of width $\leq\lambda$.
\end{lem}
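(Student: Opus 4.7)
The plan is to use the factorization $j = k \circ i$, where $i: V \to N$ is the ultrapower map by the normal measure derived from $j$. Given an arbitrary $x \in M$, since we have established $j = j_E^V$ for the $(\kappa,\kappa^{+\gamma})$-extender, we may write $x = j(F)(a)$ for some $F \in V$ with $F : [\kappa]^{|a|} \to V$ and some $a \in [\kappa^{+\gamma}]^{<\omega}$. The task is then to convert this representation, which uses the ``big'' embedding $j$, into a representation of the form $k(G)(b)$ with $G \in N$ and $|\mathrm{dom}(G)|^N \leq \lambda$.

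The natural candidate is $G := i(F) \upharpoonright [\lambda]^{|a|}$. This is an element of $N$: indeed $i(F) \in N$ has $N$-domain $[i(\kappa)]^{|a|}$, and since $\lambda = \kappa_N^{+\gamma} < i(\kappa)$, the set $[\lambda]^{|a|}$ lies in $N$, so the restriction does too. Its $N$-domain $[\lambda]^{|a|}$ has $N$-cardinality exactly $\lambda$, giving the desired bound on width.

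It remains to check that $k(G)(a) = x$. By elementarity of $k$ applied to the restriction operation,
\[
k(G) \;=\; k\bigl(i(F) \upharpoonright [\lambda]^{|a|}\bigr) \;=\; k(i(F)) \upharpoonright k\bigl([\lambda]^{|a|}\bigr) \;=\; j(F) \upharpoonright [k(\lambda)]^{|a|}.
\]
Now $k(\kappa) = \kappa$ (since $\mathrm{crit}(k) \geq \kappa^+$), so by elementarity $k(\lambda) = k(\kappa_N^{+\gamma}) = \kappa_M^{+\gamma}$, and the standing hypothesis $\kappa^{+\gamma} = \kappa_M^{+\gamma}$ gives $k(\lambda) = \kappa^{+\gamma}$. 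Hence $k(G) = j(F) \upharpoonright [\kappa^{+\gamma}]^{|a|}$, and since $a \in [\kappa^{+\gamma}]^{|a|} \subseteq M$ we get $k(G)(a) = j(F)(a) = x$, as required.

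There is no serious obstacle in this argument; the only subtlety worth being careful about is the computation $k(\lambda) = \kappa^{+\gamma}$, which depends critically on the hypothesis $\kappa^{+\gamma} = \kappa_M^{+\gamma}$ together with $\mathrm{crit}(k) \geq \kappa^+$ (established in the preceding lemma). Everything else is a direct application of elementarity and the extender representation of elements of $M$.
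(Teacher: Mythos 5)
Your proof is correct and follows essentially the same route as the paper's: restrict $i(F)$ to a domain of $N$-cardinality $\lambda$ and use $k(\kappa_{N}^{+\gamma})=\kappa_{M}^{+\gamma}=\kappa^{+\gamma}$ to verify that the extender parameter still lies in the domain after applying $k$. The only cosmetic differences are that the paper works with a single ordinal $\beta<\kappa^{+\gamma}$ and $f:\kappa\to V$ rather than finite tuples, and it adds the remark (not needed for the statement itself) that the argument persists for lifts of $k$ by passing to names.
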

\begin{proof}
Let $a\in M$. Then there is some function $f$ and $\beta<\kappa^{+\gamma}$
such that $j(f)(\beta)=a$. Notice that $j(f)(\beta)=k(i(f))(\beta)$.
Since $\beta<\kappa^{+\gamma}$ and $k(\kappa_{N}^{+\gamma})=\kappa_{M}^{+\gamma}=\kappa^{+\gamma}$,
$\beta$ will be in the domain of $k(i(f)\upharpoonright\kappa_{N}^{+\gamma})$
so we get $k(i(f)\upharpoonright\lambda)(\beta)=k(i(f)\upharpoonright\kappa_{N}^{+\gamma})(\beta)=a$.
This means that $k$ is of width $\leq\lambda$. Notice that this
stays true for lifts of $k$, since instead of $f$ such that $j(f)(\beta)=a$
we can take it such that $j(f)(\beta)$ is the $M-$name of $a$.
\end{proof}
Using the last lemma we can use \cite{key-1} (15.1) and transfer
$H_{0}$ along $k$ to get $H$.

All together, we get the following commutative triangle:
\[
\begin{array}{ccc}
V[G] & \overset{j}{\longrightarrow} & M[G\ast g\ast H]\\
 & \overset{i}{\searrow} & \uparrow k\\
 &  & N[G\ast g_{0}\ast H_{0}]
\end{array}
\]

\subsection{Building a Generic Filter for $Add(j(\kappa),j(\gamma))^{M[G\ast g\ast H]}$}

In the following parts we will build the the remaining piece for $M[G\ast g\ast H]$
which will correspond to $g$. Notice that given a generic filter
for $Add(j(\kappa),j(\gamma))^{M[G\ast g\ast H]}$ over $M[G\ast g\ast H]$
which is defined in $V[G\ast g]$, we can simply continue with the
proof in \cite{key-1} to get a mapping $j:V[G\ast g]\rightarrow M[G\ast g\ast H\ast f^{*}]$
which is defined in $V[G\ast g]$. So our goal will be to find a generic
filter for $Add(j(\kappa),j(\gamma))^{M[G\ast g\ast H]}$ over $M[G\ast g\ast H]$.
We will build it first for $\gamma=\kappa^{+2}$, then for $\gamma=\kappa^{+3}$,
and finally for every cardinal $\gamma$, $\kappa<\gamma<\kappa^{+\kappa}$.
All the work from here on, unless otherwise specified, will be in
$V[G\ast g]$.

Start by some basic lemmas and definitions:
\begin{lem}
\label{lem:1}Let $\lambda$ be a cardinal such that $cof(\lambda)>\kappa$.
Then $j^{''}\lambda$ is unbounded in $j(\lambda)$.
\end{lem}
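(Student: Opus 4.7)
The plan is to use the extender representation of $j$ established in the preceding subsection, namely that every element of $M$ (in particular any ordinal below $j(\lambda)$) is of the form $j(F)(a)$ for some $a \in [\kappa^{+\gamma}]^{<\omega}$ and $F : [\kappa]^{|a|} \to V$ in $V$. Given $\alpha < j(\lambda)$, I would first fix such a representation $\alpha = j(F)(a)$, and without loss of generality assume $F : [\kappa]^{|a|} \to \lambda$ (otherwise replace $F(x)$ by $\min(F(x),0)$ off the preimage of $\lambda$, which does not affect the value at $a$ since $\alpha < j(\lambda)$, or simply intersect the range with $\lambda$).

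Next I would pass to the supremum. Set
\[
\beta = \sup\{F(x) : x \in [\kappa]^{|a|}\}.
\]
Here is where the cofinality hypothesis enters decisively: the domain of $F$ has cardinality $\kappa$, while $\operatorname{cof}(\lambda) > \kappa$, so the supremum of a $\kappa$-sized subset of $\lambda$ stays strictly below $\lambda$, giving $\beta < \lambda$. The statement $\forall x \in [\kappa]^{|a|}\, (F(x) \leq \beta)$ holds in $V$, so by elementarity applied to $j$,
\[
\forall x \in j([\kappa]^{|a|})\, (j(F)(x) \leq j(\beta))
\]
holds in $M$. Plugging in $x = a$ gives $\alpha = j(F)(a) \leq j(\beta)$. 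Since $\operatorname{cof}(\lambda) > \kappa \geq \omega$, $\lambda$ is a limit ordinal, so $\beta + 1 < \lambda$, and $\alpha \leq j(\beta) < j(\beta+1) \in j''\lambda$, witnessing unboundedness.

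I do not anticipate a real obstacle here; the argument is a standard averaging/supremum trick. The only point requiring any care is making sure the representation $\alpha = j(F)(a)$ with $F$ taking values in $\lambda$ is available, which is immediate from the extender form of $j$ together with the fact that $\alpha < j(\lambda)$. The essential content is simply that $\operatorname{cof}(\lambda) > \kappa$ exactly matches the size of the domains of the generating functions, which is why the supremum stays below $\lambda$.
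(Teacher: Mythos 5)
Your argument is correct and is essentially the paper's own proof: both represent $\alpha<j(\lambda)$ as $j(F)(a)$ for an $F$ with domain of size $\kappa$ and range in $\lambda$, bound the range by a single $\beta<\lambda$ using $cof(\lambda)>\kappa$, and push the bound through $j$ by elementarity to get $\alpha\leq j(\beta)\in j''\lambda$. The paper phrases the last step via a constant function $f'$ with value $\sup_{\gamma<\kappa}f(\gamma)$ rather than applying $j$ to the supremum directly, but this is only a cosmetic difference.
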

\begin{proof}
Take any $\alpha<\lambda$. there is some function $f:\kappa\mapsto\lambda$
and an ordinal $\beta$ such that $j(f)(\beta)=\alpha$. So take $f^{'}$
to be also $f^{'}:\kappa\mapsto\lambda$, and define it to be $f^{'}(\beta)=\cup_{\gamma<\kappa}f(\gamma)$.
Notice that since $\forall\beta<\kappa:\,f^{'}(\beta)\geq f(\beta)$
we get $j(f^{'})(\beta)\geq j(f)(\beta)$, but also since $cof(\lambda)>\kappa$
we have $\cup_{\gamma<\kappa}f(\gamma)<\lambda$ which means that
$j(\cup_{\gamma<\kappa}f(\gamma))=j(f^{'})(\beta)<j(\lambda)$ as
needed.
\end{proof}
\begin{lem}
\label{lem:union-j-assoc}Let $X=\langle x_{\delta}\mid\delta<\rho\rangle$
be an increasing sequence of ordinals with $cof(\rho)\geq\kappa^{+}$.
Then $\cup_{a\in X}j(a)=j(\cup_{a\in X}a)$.
\end{lem}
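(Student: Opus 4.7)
The plan is to prove the two inclusions separately, closely mirroring \lemref{1}. Write $\mu:=\cup_{a\in X}a=\sup X$. The inclusion $\cup_{a\in X}j(a)\leq j(\mu)$ is immediate from monotonicity of $j$, since each $x_{\delta}\leq\mu$.

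For the reverse, I fix an arbitrary $\beta<j(\mu)$ and produce $\delta<\rho$ with $\beta<j(x_{\delta})$. Using the extender representation of $j$ established earlier in this section, write $\beta=j(f)(s)$ for some $s\in[\kappa^{+\gamma}]^{<\omega}$ and some ordinal-valued $f\in V$ with $dom(f)=[\kappa]^{|s|}$. Since $\beta<j(\mu)$, Los's theorem gives $\{x:f(x)<\mu\}\in U_{s}$, so we may modify $f$ on a $U_{s}$-null set and assume $f(x)<\mu$ for every $x\in[\kappa]^{|s|}$.

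Because $X$ is cofinal in $\mu$, for each $x\in[\kappa]^{|s|}$ I choose $\delta_{x}<\rho$ with $f(x)\leq x_{\delta_{x}}$. Since $|[\kappa]^{|s|}|=\kappa<cof(\rho)$, the ordinal $\delta^{*}:=\sup_{x}\delta_{x}$ lies strictly below $\rho$. Let $f'$ be the constant function on $[\kappa]^{|s|}$ with value $x_{\delta^{*}}$. Then $f'\geq f$ pointwise, so $j(f')(s)\geq j(f)(s)=\beta$, while $j(f')(s)=j(x_{\delta^{*}})$ by constancy of $f'$. Hence $\beta\leq j(x_{\delta^{*}})$, and since $\rho$ is a limit ordinal, taking $\delta:=\delta^{*}+1<\rho$ yields $\beta<j(x_{\delta})\leq\cup_{a\in X}j(a)$, establishing the reverse inclusion.

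The single delicate step is keeping $\delta^{*}$ strictly below $\rho$, which is precisely where the hypothesis $cof(\rho)\geq\kappa^{+}$ enters, in direct analogy with how $cof(\lambda)>\kappa$ was exploited in \lemref{1}. Nothing else in the argument is sensitive to how large $cof(\rho)$ is beyond this bound.
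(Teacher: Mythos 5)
Your proof is correct. It takes the same underlying mechanism as the paper but packages it differently: the paper's proof is a two-step reduction --- pass to a cofinal subsequence so that $\rho$ may be taken regular, observe that $j(X)$ is again an increasing sequence with $j(\cup_{a\in X}a)=\cup_{a\in j(X)}a$, and then quote \lemref{1} for the index set $\rho$ to conclude that $j''X$ is cofinal in $\sup j(X)$ --- whereas you bypass the indices entirely and rerun the argument of \lemref{1} directly on the values of $X$: represent $\beta<j(\mu)$ as $j(f)(s)$, majorize $f$ by a constant function with value some $x_{\delta^{*}}\in X$, and use $\left|[\kappa]^{|s|}\right|=\kappa<cof(\rho)$ to keep $\delta^{*}$ below $\rho$. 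Your version is self-contained and avoids the (slightly glossed-over) step in the paper of transferring unboundedness from the indices to the values via monotonicity of $j(X)$; the paper's version is shorter because it reuses \lemref{1} as a black box. One cosmetic remark: the final step $\beta<j(x_{\delta^{*}+1})$ implicitly uses that the sequence is \emph{strictly} increasing, which is indeed what the paper intends and uses in all applications; alternatively $\beta\leq j(x_{\delta^{*}})$ already suffices for the sup computation when $\sup X$ is not attained, which is automatic here since $cof(\rho)\geq\kappa^{+}$.
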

\begin{proof}
Take a subsequence of $X$ with order type $cof(\rho)$. Proving the
lemma for it is enough, so we can assume that $cof(\rho)=|\rho|$.
First, notice that $\cup_{a\in X}j(a)\leq j(\cup_{a\in X}a)$ is trivial.
For the other direction, Notice that $j(\cup_{a\in X}a)=\cup_{a\in j(X)}a$
and since $X$ is an increasing sequence so is $j(X)$. This means
that it is enough to show that $j^{''}X$ is unbounded in $j(X)$.
For that we need only to look at the indexes, and then it follows
immediately from \lemref{1}.
\end{proof}
\begin{lem}
\label{lem:M-k-sequence}$V[G\ast g]\models^{\kappa}M[G\ast g\ast H]\subset M[G\ast g\ast H]$
\end{lem}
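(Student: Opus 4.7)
The plan is to represent each $x_\alpha\in M[G\ast g\ast H]$ by an $\mathbb{R}$-name in $M[G\ast g]$, where $\mathbb{R}=G^{M[G\ast g]}_{\kappa+1,j(\kappa)}$ is the tail of $j(\mathbb{P})$ above stage $\kappa+1$ for which $H$ is generic over $M[G\ast g]$, and then reduce to the analogous closure statement one step earlier, namely $V[G\ast g]\models{}^{\kappa}M[G\ast g]\subseteq M[G\ast g]$. Concretely, given $\vec{x}=\langle x_\alpha:\alpha<\kappa\rangle\in V[G\ast g]$ with each $x_\alpha\in M[G\ast g\ast H]$, use AC in $V[G\ast g]$ to pick for every $\alpha$ an $\mathbb{R}$-name $\dot{x}_\alpha\in M[G\ast g]$ whose $H$-evaluation equals $x_\alpha$. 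The sequence $\langle\dot{x}_\alpha:\alpha<\kappa\rangle$ then lies in $V[G\ast g]$ with entries in $M[G\ast g]$; granting the intermediate closure, this name-sequence belongs to $M[G\ast g]$, and then $\vec{x}=\langle(\dot{x}_\alpha)_{H}:\alpha<\kappa\rangle$ lies in $M[G\ast g\ast H]$, which is what we want.

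The remaining work is to prove $V[G\ast g]\models{}^{\kappa}M[G\ast g]\subseteq M[G\ast g]$. This is precisely the passage from $V\models{}^{\kappa}M\subseteq M$ to $V[G]\models{}^{\kappa}M[G]\subseteq M[G]$ carried out in the excerpt, now applied one level higher, and the needed hypotheses are in place: by the preceding lemma $\mathbb{Q}_\kappa^{V[G]}=\mathbb{Q}_\kappa^{M[G]}$, so $g$ is simultaneously generic over $V[G]$ and over $M[G]$; every condition of $\mathbb{Q}_\kappa$ has cardinality $<\kappa$; $\mathbb{Q}_\kappa$ is $\kappa^{+}$-cc in both models; and $V[G]\models{}^{\kappa}M[G]\subseteq M[G]$ already holds. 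Under these conditions the argument of \cite{key-1}, Lemma~8.4 can be rerun: for a candidate $\vec{y}\in V[G\ast g]$ with values in $M[G\ast g]$, each $y_\alpha$ admits a $\mathbb{Q}_\kappa$-name in $M[G]$, and the chain condition together with the closure of $M[G]$ in $V[G]$ allows one to patch these into a single $\mathbb{Q}_\kappa$-name in $M[G]$ for the whole sequence, whence $\vec{y}\in M[G\ast g]$.

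The delicate point, and the main obstacle, is precisely this patching step. A naive choice of $M[G]$-names produces only a name-sequence in $V[G\ast g]$, not in $M[G]$, and the antichains of $\mathbb{Q}_\kappa$ deciding the individual entries are computed in $V[G]$. What saves the argument is that every named value already lies in $M[G\ast g]$, so by the $\kappa^{+}$-cc of $\mathbb{Q}_\kappa$ one may replace each name by a nice name of cardinality $\leq\kappa$ in $M[G]$; then $V[G]\models{}^{\kappa}M[G]\subseteq M[G]$ forces the coded sequence of such nice names to lie in $M[G]$. Unwinding yields a single $\mathbb{Q}_\kappa$-name in $M[G]$ for $\vec{y}$, establishing the intermediate closure and thereby the lemma.
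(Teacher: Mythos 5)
Your argument is correct and follows the same route as the paper: both proofs reduce the $H$-step to the closure of $M[G\ast g]$ by passing to a $\kappa$-sequence of $\mathbb{R}$-names in $M[G\ast g]$ and evaluating. The only divergence is in how the intermediate closure $V[G\ast g]\models{}^{\kappa}M[G\ast g]\subseteq M[G\ast g]$ is obtained: the paper simply invokes Cummings (8.4) for the $\kappa^{+}$-c.c.\ iteration $\mathbb{P}_{\kappa}\ast\dot{\mathbb{Q}}_{\kappa}$ in one application, whereas you split it into two steps and re-derive the second by hand. Your hand-rolled step is repairable but, as written, slightly misplaces the patching: choosing a (nice) name for each \emph{actual value} $y_{\alpha}\in M[G\ast g]$ is a choice made in $V[G\ast g]$, so the resulting sequence of names need only lie in $V[G\ast g]$, and $Add(\kappa,\cdot)$ does add new $\kappa$-sequences over $V[G]$ --- so the closure of $M[G]$ \emph{in $V[G]$} cannot be applied to it. The correct bookkeeping is to fix a single $\mathbb{Q}_{\kappa}$-name $\dot{\vec{y}}\in V[G]$ for the whole sequence and, for each $\alpha$, a maximal antichain $A_{\alpha}$ (of size $\leq\kappa$ by the chain condition) whose elements decide $\dot{\vec{y}}(\alpha)$ to be a specific name in $M[G]$; the coded object $\langle(\alpha,q,\dot{z}_{\alpha,q})\rangle$ then lives in $V[G]$ and is a $\kappa$-sized subset of $M[G]$, so the closure in $V[G]$ does apply and $M[G]$ can assemble the desired name. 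With that adjustment your proof is complete; citing (8.4) for the two-step iteration, as the paper does, avoids the issue entirely.
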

\begin{proof}
We know $V\models^{\kappa}M\subset M$. Use \cite{key-1} (8.4) to
get $V[G\ast g]\models^{\kappa}M[G\ast g]\subset M[G\ast g]$. Now
if $x=\langle x_{\alpha}|\alpha<\kappa\rangle\in^{\kappa}M[G\ast g\ast H]$,
denote the name of $x_{\alpha}$ by $\tilde{x_{\alpha}}$. Then $\langle\tilde{x_{\alpha}}|\alpha<\kappa\rangle\in M[G\ast g]$
and therefore $\langle x_{\alpha}|\alpha<\kappa\rangle\in M[G\ast g\ast H]$.
\end{proof}
\begin{lem}
There is a generic filter for $Add(j(\kappa),j(\kappa))^{M[G\ast g\ast H]}$
over $M[G\ast g\ast H]$, and we will denote the corresponding function
by $g^{*}$.
\end{lem}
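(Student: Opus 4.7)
The plan is to construct $g^{\ast}$ inside $V[G\ast g]$ by a transfinite recursion that meets every dense open subset of $Q := Add(j(\kappa),j(\kappa))^{M[G\ast g\ast H]}$ lying in $M[G\ast g\ast H]$. Two ingredients drive the recursion: sufficient closure of $Q$ and a manageable count of the dense open sets.

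For closure, in $M[G\ast g\ast H]$ the forcing $Q$ is $(j(\kappa),\infty)$-closed, and \lemref{M-k-sequence} gives $V[G\ast g]\models{}^{\kappa}M[G\ast g\ast H]\subseteq M[G\ast g\ast H]$, so any $\leq\kappa$-sequence of conditions in $Q$ built in $V[G\ast g]$ already lies in $M[G\ast g\ast H]$ and admits a lower bound in $Q$. For the count, $|Q|^{M[G\ast g\ast H]}=j(\kappa)$, and $j(\mathbb{P}_{\kappa})$ is a $j(\kappa)^{+}$-cc iteration of size $j(\kappa)$ over $M$, so $|P(Q)|^{M[G\ast g\ast H]}\leq (j(\kappa)^{+})^{M}$. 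The extender representation of $j$ writes every ordinal below $(j(\kappa)^{+})^{M}$ as $j(F)(a)$ for some $F\in V$ with $dom(F)=[\kappa]^{|a|}$ and $a\in[\kappa^{+\gamma}]^{<\omega}$, so $|(j(\kappa)^{+})^{M}|^{V}\leq\kappa^{+\gamma}$, a bound preserved in $V[G\ast g]$ by the chain condition of $\mathbb{P}_{\kappa+1}$. Hence at most $\kappa^{+\gamma}$ dense open subsets of $Q$ in $M[G\ast g\ast H]$ need to be met.

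Fix an enumeration $\langle D_{\xi}:\xi<\kappa^{+\gamma}\rangle$ in $V[G\ast g]$ of these dense open sets, and construct a descending chain $\langle q_{\xi}:\xi<\kappa^{+\gamma}\rangle$ in $Q$ with $q_{\xi}\in D_{\xi}$: at successors extend by density, and at limits of cofinality $\leq\kappa$ take a lower bound using the first ingredient. The upward closure of this chain will be the desired $M[G\ast g\ast H]$-generic filter $g^{\ast}$ for $Q$.

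The main obstacle is precisely the handling of limit stages of cofinality greater than $\kappa$, since the only closure of $M[G\ast g\ast H]$ in $V[G\ast g]$ we have secured is $\kappa$-closure, whereas the chain has length up to $\kappa^{+\gamma}$. I expect to overcome this by organizing the enumeration so that, at each long limit $\eta$, the sequence $\langle q_{\xi}:\xi<\eta\rangle$ is coded in $M[G\ast g\ast H]$ by a much shorter $M[G\ast g\ast H]$-sequence, exploiting the factorization $j=k\circ i$ and the fact that $crit(k)\geq\kappa^{+}$, so that cofinal subsequences can be reconstructed internally in $M[G\ast g\ast H]$ and the lower bound taken there via the $j(\kappa)$-closure of $Q$.
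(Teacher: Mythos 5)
There is a genuine gap, and it sits exactly where you place ``the main obstacle.'' The number of dense open subsets of $Q=Add(j(\kappa),j(\kappa))^{M[G\ast g\ast H]}$ lying in $M[G\ast g\ast H]$ is $(j(\kappa)^{+})^{M}$, whose cardinality in $V[G\ast g]$ is $\kappa^{+\gamma}$ (already $\kappa^{++}$ in the basic case). A descending chain of that length has limit stages $\eta$ of cofinality $\kappa^{+}$, and at such a stage the initial segment $\langle q_{\xi}:\xi<\eta\rangle$ is a sequence built externally in $V[G\ast g]$; since \lemref{M-k-sequence} only gives closure of $M[G\ast g\ast H]$ under $\kappa$-sequences, neither this segment nor any cofinal subsequence of it need belong to $M[G\ast g\ast H]$, so the $j(\kappa)$-closure of $Q$ \emph{inside} $M[G\ast g\ast H]$ produces no lower bound. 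Your closing paragraph does not repair this: ``organizing the enumeration so that the chain is coded by a much shorter $M[G\ast g\ast H]$-sequence'' is precisely the assertion that needs proof, and no mechanism is supplied; moreover the recursion must survive \emph{every} limit of uncountable cofinality below $\kappa^{+\gamma}$, since all the dense sets must eventually be met, so one cannot dodge the problem by rearranging the enumeration.

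The correct way to exploit the factorization $j=k\circ i$ --- and this is the paper's (very short) proof --- is in two separate steps rather than one diagonalization over $M$. First diagonalize over $N[G\ast g_{0}\ast H_{0}]$: since $N$ is the ultrapower by a measure on $\kappa$, GCH gives only $\kappa^{+}$-many dense subsets of $Add(i(\kappa),i(\kappa))^{N[G\ast g_{0}\ast H_{0}]}$ to meet, every limit ordinal below $\kappa^{+}$ has cofinality $\leq\kappa$, and the $\kappa$-closure of $N[G\ast g_{0}\ast H_{0}]$ in $V[G\ast g]$ together with the closure of the forcing pushes the recursion through; this is \cite{key-1} (8.1). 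Then transfer the resulting generic along $k$ to $M[G\ast g\ast H]$ using that $k$ has width $\leq\lambda$ with $\lambda<i(\kappa)$ (\cite{key-1} (15.1)): a maximal antichain of $Q$ in $M[G\ast g\ast H]$ has the form $k(F)(a)$ for some $F\in N[G\ast g_{0}\ast H_{0}]$ whose domain has size $\leq\lambda$ there, and the $N$-generic can be arranged to meet all $\leq\lambda$-many antichains $F(b)$ simultaneously by the closure of the forcing on the $N$ side, so its pointwise $k$-image generates an $M[G\ast g\ast H]$-generic filter. Your proposal contains neither step in usable form, so as written it does not yield a proof.
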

\begin{proof}
From \cite{key-1} (8.1) there is a generic filter for $Add(i(\kappa),i(\kappa))^{N[G\ast g_{0}\ast H_{0}]}$
over $N[G\ast g_{0}\ast H_{0}]$. Use \cite{key-1} to transfer it
to $M[G\ast g\ast H]$ and get a generic filter for $Add(j(\kappa),j(\kappa))^{M[G\ast g\ast H]}$.
\end{proof}
\begin{defn}
If $f$ is some partial function from $A$ to $B$, and $g^{'}$ is
some partial function from $A\times C$ to $D$, denote $g^{'}\diamond f=\{((f(\alpha),\gamma),\delta)|((\alpha,\gamma),\delta)\in g^{'},\alpha\in dom(f)\}$
(i.e., diamond maps the first argument).
\end{defn}
\begin{defn}
For a set $A$ of pairs, denote $A|_{0}=\{\alpha|\exists(\alpha,\beta)\in A\}$
and $A|_{1}=\{\beta|\exists(\alpha,\beta)\in A\}$.
\end{defn}
\begin{lem}
\label{lem:build-f}If $f\in M[G\ast g\ast H]$ is a one to one partial
function from $j(\kappa)$ to $j(\gamma)$ and $D\in M[G\ast g\ast H]$
is a maximal antichain in $Add(j(\kappa),j(\gamma))^{M[G\ast g\ast H]}$
where $\cup_{d\in D}dom(d)|_{0}\subseteq Rng(f)$, then there is some
$d\in D$ such that $d\subseteq g^{*}\diamond f$.
\end{lem}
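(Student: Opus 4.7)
The plan is to pull back the antichain $D$ through $f^{-1}$ to obtain a maximal antichain $D'$ in the smaller forcing $Add(j(\kappa),j(\kappa))^{M[G\ast g\ast H]}$, apply the genericity of $g^{*}$ to find a member of $D'$ below $g^{*}$, and translate back through $\diamond f$ to obtain the desired $d$.

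Working inside $M[G\ast g\ast H]$, I would define $D' := \{d\diamond f^{-1}: d\in D\}$. This makes sense because $\cup_{d\in D} dom(d)|_{0}\subseteq Rng(f)=dom(f^{-1})$, so $f^{-1}$ is defined on every first coordinate appearing in each $d$; moreover, since $f^{-1}$ is a one-to-one function and $|d\diamond f^{-1}|=|d|<j(\kappa)$, each $d\diamond f^{-1}$ is a legitimate condition in $Add(j(\kappa),j(\kappa))^{M[G\ast g\ast H]}$. The antichain property is immediate: if $d_{1},d_{2}\in D$ are incompatible, they disagree at some $(\alpha,\beta)\in dom(d_{1})\cap dom(d_{2})$, and the $\diamond f^{-1}$ counterparts then disagree at $(f^{-1}(\alpha),\beta)$.

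The main step is verifying maximality of $D'$. Given any condition $p\in Add(j(\kappa),j(\kappa))^{M[G\ast g\ast H]}$, I would first truncate to $p'' := p\upharpoonright(dom(f)\times j(\kappa))$ and push it forward: $p''\diamond f$ is a condition in $Add(j(\kappa),j(\gamma))^{M[G\ast g\ast H]}$ since $f$ is injective on $dom(p'')|_{0}$. By maximality of $D$ there exists $d\in D$ compatible with $p''\diamond f$. A routine check then shows $d\diamond f^{-1}$ is compatible with $p$: on the intersection of their domains we necessarily have $\alpha\in dom(f)$, and there $(d\diamond f^{-1})(\alpha,\beta)=d(f(\alpha),\beta)$ must agree with $(p''\diamond f)(f(\alpha),\beta)=p(\alpha,\beta)$ whenever both sides are defined. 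This is where the hypothesis $\cup_{d\in D} dom(d)|_{0}\subseteq Rng(f)$ is essential: without it, $d$ could contain constraints outside $Rng(f)$ that cannot be represented in the $j(\kappa)\times j(\kappa)$ forcing.

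With $D'$ established as a maximal antichain in $M[G\ast g\ast H]$, the genericity of $g^{*}$ produces some $d'=d\diamond f^{-1}\in D'$ with $d'\subseteq g^{*}$. To finish, I unpack: each element $((f(\alpha'),\beta),\epsilon)\in d$ satisfies $((\alpha',\beta),\epsilon)\in d'\subseteq g^{*}$, hence by definition of $\diamond$, $((f(\alpha'),\beta),\epsilon)\in g^{*}\diamond f$, giving $d\subseteq g^{*}\diamond f$. The only real obstacle is the maximality argument for $D'$ — specifically the insight to first restrict $p$ to its $dom(f)$-portion before pushing through $f$; the rest is bookkeeping with the $\diamond$ operation.
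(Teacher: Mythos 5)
Your proposal is correct and follows essentially the same route as the paper: pull $D$ back through $f^{-1}$ to a maximal antichain $D'$ of $Add(j(\kappa),j(\kappa))^{M[G\ast g\ast H]}$, meet it with $g^{*}$ by genericity, and push the result forward through $\diamond f$. The only cosmetic difference is that you verify maximality of $D'$ in the full forcing by truncating an arbitrary condition to $dom(f)\times j(\kappa)$ before pushing it through $f$, whereas the paper checks maximality only among conditions supported on $dom(f)\times j(\kappa)$ and then invokes genericity of the restriction $g^{*}|_{dom(f)\times j(\kappa)}$; both versions work.
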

\begin{proof}
Look at $D^{'}=\{d\diamond f^{-1}|d\in D\}\in M[G\ast g\ast H]$.
Notice that $D^{'}$ is also an antichain, since $d_{1}\diamond f^{-1},d_{2}\diamond f^{-1}$
are incompatible on $(\alpha,\beta)$ iff $d_{1},d_{2}$ are incompatible
on $(f(\alpha),\beta)$. Also notice that $D^{'}$ is maximal (for
functions with domain limited to $dom(f)\times j(\kappa)$) since
if there is another function which is incompatible with any of the
functions in $D^{'}$, applying $\diamond f$ will give a function
which is incompatible with any of the functions in $D$. So since
$dom(f)\in M[G\ast g\ast H]$ the restriction $g^{*}|_{dom(f)\times j(\kappa)}$
is generic and therefore there is some $d^{'}\diamond f^{-1}\in D^{'}$
such that $d^{'}\diamond f^{-1}\subseteq g^{*}|_{dom(f)\times j(\kappa)}\subseteq g^{*}$,
and then $d^{'}\subseteq g^{*}\diamond f$.
\end{proof}
\begin{rem}
The $j(\kappa)$-c.c. of the forcing adding $G\ast g\ast H$ to $M$
implies that every set of cardinality $j(\kappa)$ in $M[G\ast g\ast H]$
can be covered by a set in $M$ of the same cardinality. In particular
the set $\cup_{d\in D}dom(d)|_{0}$ can be covered by a set in $M$
of cardinality $j(\kappa)$ there.
\end{rem}
\begin{lem}
\label{lem:build-generic}Suppose there is some $f$ a one to one
function from $j(\kappa)$ onto $j(\gamma)$ such that for every $Y\in(P_{j(\kappa)^{+}}(j(\gamma)))^{M[G\ast g\ast H]}$
there is some $Z\subseteq j(\kappa)$ such that:
\begin{enumerate}
\item $Y\subseteq f^{''}Z$.
\item $f|_{Z}\in M[G\ast g\ast H]$.
\end{enumerate}
Then there is a generic filter for $Add(j(\kappa),j(\gamma))^{M[G\ast g\ast H]}$
over $M[G\ast g\ast H]$.
\end{lem}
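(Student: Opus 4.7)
The plan is to define the generic filter directly from the already-constructed $g^{*}$ and the hypothesized function $f$, using Lemma \ref{lem:build-f} as the engine for genericity. The natural candidate is
\[
G^{*} = \{p \in Add(j(\kappa),j(\gamma))^{M[G*g*H]} : p \subseteq g^{*}\diamond f\}.
\]
First I would check that $G^{*}$ is a filter in $M[G*g*H]$: since $f$ is one-to-one, $g^{*}\diamond f$ is a well-defined partial function on $j(\gamma)\times j(\kappa)$, and any finite union of restrictions of $g^{*}\diamond f$ of size $<j(\kappa)$ remains a condition, so compatibility and downward closure hold immediately.

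The main content is genericity. Let $D \in M[G*g*H]$ be a maximal antichain of $Add(j(\kappa),j(\gamma))^{M[G*g*H]}$. The forcing satisfies the $j(\kappa)^{+}$-c.c., so $|D|\leq j(\kappa)$, and each $d\in D$ has domain of size $<j(\kappa)$. Consequently
\[
Y = \bigcup_{d\in D} dom(d)|_{0} \;\in\; \bigl(P_{j(\kappa)^{+}}(j(\gamma))\bigr)^{M[G*g*H]}.
\]
Now I would invoke the hypothesis on $f$ to produce $Z\subseteq j(\kappa)$ with $Y\subseteq f''Z$ and $f|_{Z}\in M[G*g*H]$. The restriction $f|_{Z}$ is then a one-to-one partial function from $j(\kappa)$ to $j(\gamma)$ lying in $M[G*g*H]$, whose range covers $\bigcup_{d\in D} dom(d)|_{0}$.

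At this point Lemma \ref{lem:build-f} applies verbatim to the pair $(f|_{Z},D)$ and yields some $d\in D$ with $d\subseteq g^{*}\diamond (f|_{Z})$. Since $f|_{Z}\subseteq f$, we have $g^{*}\diamond (f|_{Z})\subseteq g^{*}\diamond f$, so $d\in G^{*}$, establishing that $G^{*}$ meets $D$. As $D$ was arbitrary, $G^{*}$ is $M[G*g*H]$-generic, proving the lemma. I do not expect a genuine obstacle here: all the difficulty has been displaced to producing the function $f$ with the covering property, which is precisely what the three subsequent sections of the paper (for $\gamma=\kappa^{+2},\kappa^{+3}$, and general successor $\gamma$) are devoted to.
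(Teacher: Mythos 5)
Your argument is correct and follows the paper's own proof essentially verbatim: the paper likewise sets $g^{**}=g^{*}\diamond f$, uses the $j(\kappa)^{+}$-c.c. to see that $\bigcup_{d\in D}dom(d)|_{0}$ has size at most $j(\kappa)$ in $M[G\ast g\ast H]$, applies the covering hypothesis to obtain $Z$, and then invokes Lemma~\ref{lem:build-f} for the pair $(f|_{Z},D)$ to conclude $d\subseteq g^{*}\diamond f|_{Z}\subseteq g^{**}$. No substantive differences.
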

\begin{proof}
Define $g^{**}=g^{*}\diamond f$. We will show that $g^{**}$ is the
function generated from a generic filter. From the definition of $g^{**}$
it is clear that it is a function $j(\gamma)\times j(\kappa)\rightarrow2$.
Let $D\in M[G\ast g\ast H]$ be a maximal antichain for $Add(j(\kappa),j(\gamma))^{M[G\ast g\ast H]}$.
Then from $j(\kappa^{+})-c.c.$ we have $|D|\leq j(\kappa)$, which
means also $|\cup_{d\in D}dom(d)|_{0}|\leq j(\kappa)$. So from the
property of $f$ there is some $Z\subseteq j(\kappa)$ such that $\cup_{d\in D}dom(d)|_{0}\subseteq f^{''}Z$,
and also $f|_{Z}\in M[G\ast g\ast H]$. Notice that this means that
$f|_{Z},D$ complete all the requirements of \lemref{build-f}, and
so there is some $d\in D$ such that $d\subseteq g^{*}\diamond f|_{Z}\subseteq g^{*}\diamond f=g^{**}$,
which proves what we wanted.
\end{proof}
\newpage{}

\section{\label{part:2}$Add(\kappa,\kappa^{+2})$}

\subsection{Building Good Models}

We will first prove the case where $\gamma=\kappa^{+2}$.
\begin{defn}
Let $\vartriangleleft$ be some well ordering on $H_{\theta}^{M[G\ast g\ast H]}$
for some big enough $\theta$, such that if $A,B\in H_{\theta}^{M[G\ast g\ast H]}$
and $|A|<|B|$ then $A\vartriangleleft B$.
\end{defn}
\begin{defn}
$X\in(P_{j(\kappa)^{+}}(H_{\theta}))^{M[G\ast g\ast H]}$ will be
called a good model if:
\begin{enumerate}
\item $M[G\ast g\ast H]\models X\preceq\langle H_{\theta}^{M[G\ast g\ast H]},\vartriangleleft\rangle$.
\item $M[G\ast g\ast H]\models|X|=j(\kappa)$.
\item \label{enu:base}$j(\kappa),j(\kappa^{+})\in X$.
\item \label{enu:ord}$X\cap j(\kappa^{+})\in j(\kappa^{+})$.
\item \label{enu:unbounded}$X\cap j^{''}\kappa^{++}$ is unbounded in $sup(X\cap j(\kappa^{++}))$.
So for every $\alpha\in X\cap j(\kappa^{++})$ there is an element
of $X\cap j^{''}\kappa^{++}$ which is $\geq\alpha$. Denote this
element by $\eta_{X,\alpha}$, and its source by $\eta_{X,\alpha}^{'}$
(i.e. $j(\eta_{X,\alpha}^{'})=\eta_{X,\alpha}\in(X\cap j^{''}\kappa^{++})\setminus\alpha$).
\item \label{enu:closed}If $D\subseteq X$ is an increasing ordinal sequence
of cofinality $>\omega$ then $\cup D\in X$.
\end{enumerate}
\end{defn}
\begin{lem}
For every $B\in(P_{j(\kappa)^{+}}(j(\kappa^{++})))^{M[G\ast g\ast H]}$
there is some good model $X_{B}$ such that $B\subseteq X_{B}$.
\end{lem}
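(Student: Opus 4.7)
The plan is to construct $X_B$ as the union of an internally approachable continuous chain of elementary submodels $\langle Y_\xi:\xi<\omega_1\rangle$ of $\langle H_\theta^{M[G\ast g\ast H]},\vartriangleleft\rangle$, each of size $j(\kappa)$, arranged so as to absorb closure witnesses for conditions (4), (5) and (6).

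I would start by taking $Y_0$ to be the Skolem hull (inside $\langle H_\theta^{M[G\ast g\ast H]},\vartriangleleft\rangle$) of $B\cup\{j(\kappa),j(\kappa^+)\}$; since $|B|\le j(\kappa)$ in $M[G\ast g\ast H]$, this is an elementary submodel of size $j(\kappa)$ there, immediately securing (1), (2), (3). Given $Y_\xi$, working in $V[G\ast g]$, set $\delta_\xi=\sup(Y_\xi\cap j(\kappa^+))<j(\kappa^+)$ and $\beta_\xi=\sup(Y_\xi\cap j(\kappa^{++}))<j(\kappa^{++})$ (both strict inequalities use $|Y_\xi|=j(\kappa)$). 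By \lemref{1} applied to $\lambda=\kappa^{++}$, the set $j^{\prime\prime}\kappa^{++}$ is unbounded in $j(\kappa^{++})$, so pick $\eta_\xi^\prime<\kappa^{++}$ with $j(\eta_\xi^\prime)\ge\beta_\xi$. Then let $Y_{\xi+1}$ be the Skolem hull of $Y_\xi\cup\delta_\xi\cup\{Y_\xi,j(\eta_\xi^\prime),\delta_\xi\}$, take unions at limits, and set $X_B=\bigcup_{\xi<\omega_1}Y_\xi$.

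Conditions (1)--(3) persist throughout the iteration. For (4), the whole ordinal $\delta_\xi$ is absorbed at stage $\xi+1$, so $X_B\cap j(\kappa^+)=\sup_{\xi}\delta_\xi$ is an initial segment and therefore an ordinal below $j(\kappa^+)$. For (5), the sequence $\{j(\eta_\xi^\prime):\xi<\omega_1\}\subseteq X_B\cap j^{\prime\prime}\kappa^{++}$ is cofinal in $\sup(X_B\cap j(\kappa^{++}))$ by the choice $j(\eta_\xi^\prime)\ge\beta_\xi$. For (6), the internal approachability $Y_\xi\in Y_{\xi+1}$ together with elementarity should yield that any increasing sequence $D\subseteq X_B$ of cofinality $>\omega$ has $\sup D$ captured inside some $Y_{\xi+1}$, either because a cofinal portion of $D$ already lies inside a single $Y_\xi$, or by definability of $\sup D$ inside $Y_{\xi+1}$ from $Y_\xi$ as a parameter.

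For the membership $X_B\in M[G\ast g\ast H]$, note that the entire recursion consumes only $B\in M[G\ast g\ast H]$ and the ordinal sequence $\langle\eta_\xi^\prime:\xi<\omega_1\rangle\in V[G\ast g]$; since $\omega_1<\kappa$, \lemref{M-k-sequence} places this sequence inside $M[G\ast g\ast H]$, and the recursion can then be reproduced there.

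The main obstacle I expect is property (6): one must confirm that length-$\omega_1$ internal approachability genuinely suffices to capture suprema of increasing sequences of \emph{every} uncountable cofinality, rather than only cofinality $\omega_1$, while simultaneously keeping the size bound $|X_B|=j(\kappa)$ and keeping the chain definable from data inside $M[G\ast g\ast H]$. If $\omega_1$ proves insufficient, the remedy is to lengthen the iteration through a cardinal large enough to dominate every relevant cofinality, again using the $\kappa$-closure from \lemref{M-k-sequence} to keep the construction internal.
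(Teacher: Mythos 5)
There is a genuine gap, and it sits exactly where you predicted: property (6). An internally approachable chain of length $\omega_{1}$ does not just fail to obviously give (6) --- it provably destroys it. Your own construction manufactures a counterexample: the ordinals $\delta_{\xi}=\sup(Y_{\xi}\cap j(\kappa^{+}))$ are placed into $Y_{\xi+1}$ at each successor stage, and since $\delta_{\xi}\in Y_{\xi+1}$ forces $\delta_{\xi+1}>\delta_{\xi}$, the set $D=\{\delta_{\xi}\mid\xi<\omega_{1}\}$ is a strictly increasing sequence contained in $X_{B}$ of cofinality $\omega_{1}>\omega$. Its supremum is $\rho=X_{B}\cap j(\kappa^{+})$, which cannot belong to $X_{B}$: since $\rho$ is downward closed in $j(\kappa^{+})$, $\rho\in X_{B}$ would give $\rho\in X_{B}\cap j(\kappa^{+})=\rho$. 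So (6) fails. The same diagonalization defeats your two proposed escape routes: no cofinal portion of this $D$ lies in a single $Y_{\xi}$, and $\sup D$ is not definable in any $Y_{\xi+1}$ because it is not in $X_{B}$ at all. Your fallback of \emph{lengthening} the chain makes matters strictly worse, since any chain of uncountable length admits such a diagonal sequence.

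The correct move is the opposite one, and it is what the paper does: take the chain to have length exactly $\omega$. Then any increasing $D\subseteq X_{B}$ of uncountable cofinality must (by Fodor, or just pigeonhole into $\omega$ cells against a regular uncountable cofinality) have a cofinal subset inside a single level $X_{B}^{i}$; no diagonalization is possible because a cofinality-$\omega$ sequence of levels cannot be climbed by a sequence of uncountable cofinality. To convert ``cofinal subset in one level'' into ``supremum in the next level,'' the paper enumerates the ordinals of $X_{B}^{i}$ as $\langle x_{\alpha}^{i}\mid\alpha<\psi_{i}\rangle$ and explicitly requires $\cup_{\alpha<\beta}x_{\alpha}^{i}\in X_{B}^{i+1}$ for every $\beta\leq\psi_{i}$; this is the closure datum your stages are missing (elementarity alone does not supply these suprema, since the set $X_{B}^{i}$ is not a definable parameter of the right kind inside itself). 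Your treatment of (1)--(5) and of membership in $M[G\ast g\ast H]$ via \lemref{M-k-sequence} is fine and matches the paper once the chain length is corrected to $\omega$; with an $\omega$-chain the unboundedness in (5) is still secured by putting one $\eta_{i}\in j^{''}\kappa^{++}$ above $\sup(X_{B}^{i}\cap j(\kappa^{++}))$ into $X_{B}^{i+1}$ at each of the $\omega$ steps.
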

\begin{proof}
Using Lowenheim-Skolem-Tarski build $X_{B}^{0}$ such that: 
\begin{enumerate}
\item $M[G\ast g\ast H]\models X_{B}^{0}\preceq\langle H_{\theta}^{M[G\ast g\ast H]},\vartriangleleft\rangle$.
\item $M[G\ast g\ast H]\models|X_{B}^{0}|=j(\kappa)$.
\item $j(\kappa),j(\kappa^{+})\in X_{B}^{0}$.
\item $B\subseteq X_{B}^{0}$.
\end{enumerate}
Now work inductively - assume we have $X_{B}^{i}$ which has $M[G\ast g\ast H]\models|X_{B}^{i}|=j(\kappa)$.
Then $X_{B}^{i}\cap j(\kappa^{++})$ must be bounded in $j(\kappa^{++})$,
so from \lemref{1} there is some $\eta_{i}\in j^{''}\kappa^{++}$
above it. Denote by $\langle x_{\alpha}^{i}|\alpha<\psi_{i}\rangle$
an increasing ordinal sequence which enumerates the ordinals of $X_{B}^{i}$
(notice $|\psi_{i}|=j(\kappa)$). Build $X_{B}^{i+1}$ with Lowenheim-Skolem-Tarski
such that: 
\begin{enumerate}
\item $M[G\ast g\ast H]\models X_{B}^{i+1}\preceq\langle H_{\theta}^{M[G\ast g\ast H]},\vartriangleleft\rangle$.
\item $M[G\ast g\ast H]\models|X_{B}^{i+1}|=j(\kappa)$.
\item $X_{B}^{i}\subseteq X_{B}^{i+1}$.
\item $sup(X_{B}^{i}\cap j(\kappa^{+}))\subseteq X_{B}^{i+1}$.
\item $\eta_{i}\in X_{B}^{i+1}$.
\item $\forall\beta\leq\psi_{i}:\cup_{\alpha<\beta}x_{\alpha}^{i}\in X_{B}^{i+1}$
\end{enumerate}
Finally, define $X_{B}=\cup_{i<\omega}X_{B}^{i}$. We have $B\subseteq X_{B}^{0}\subseteq X_{B}$
so it is left to check that $X_{B}$ satisfies all the properties
of a good model:
\begin{enumerate}
\item Notice that the sequence $\langle X_{B}^{i}|i<\omega\rangle$ is an
$\omega$-sequence of elements of $M[G\ast g\ast H]$, so from \lemref{M-k-sequence}
it is also in $M[G\ast g\ast H]$. Therefore so is $X_{B}$, and it
is a basic property that the union of an increasing sequence of elementary
submodels is also an elementary submodel.
\item From the induction we know that all of the $X_{B}^{i}$ are of size
$j(\kappa)$ in $M[G\ast g\ast H]$, and since there are only $\aleph_{0}$
so is their union.
\item $j(\kappa),j(k^{+})\in X_{B}^{0}\subseteq X_{B}$.
\item Let $\alpha\in X_{B}\cap j(\kappa^{+})$. Then there is some $i$
such that $\alpha\in X_{B}^{i}\cap j(\kappa^{+})$. But then $\alpha\subseteq sup(X_{B}^{i}\cap j(\kappa^{+}))\subseteq X_{B}^{i+1}\subseteq X_{B}$.
So $X_{B}\cap j(\kappa^{+})$ is closed downwards, which means that
it is an ordinal. Since $M[G\ast g\ast H]\models|X_{B}|=j(\kappa)$
that ordinal must be below $j(\kappa^{+})$ and so $X_{B}\cap j(\kappa^{+})\in j(\kappa^{+})$\@.
\item Let $\alpha\in X_{B}\cap j(\kappa^{++})$. Then there is some $i$
such that $\alpha\in X_{B}^{i}\cap j(\kappa^{++})$. But then we have
$\alpha<\eta_{i}\in X_{B}^{i+1}\cap j^{''}\kappa^{++}\subseteq X_{B}\cap j^{''}\kappa^{++}$,
which means that $X_{B}\cap j^{''}\kappa^{++}$ is unbounded in $X_{B}\cap j(\kappa^{++})$. 
\item Let $\langle d_{\alpha}|\alpha<\psi\rangle=D\subseteq X_{B}$ be some
increasing ordinal sequence of cofinality $>\omega$, i.e. $cof(\psi)>\omega$.
It is clear that it is enough to look at $D$ such that $\psi$ is
regular (otherwise just take an unbounded subsequence of $D$ of size
$cof(\psi)$), so we can assume that $\psi$ is uncountable. Define
the mapping which for $\alpha<\psi$ returns the $i$ such that $d_{\alpha}\in X_{B}^{i}$.
Notice that from $\alpha\geq\omega$ the function is regressive, so
from Fodor's lemma there is some stationary set $S^{'}\subseteq\psi$
and $i^{'}<\omega$ such that $\forall\alpha\in S^{'}:d_{\alpha}\in X_{B}^{i^{'}}$.
But then since $S^{'}$ is unbounded in $\psi$ we get that $\cup D=\cup_{\alpha\in S^{'}}d_{\alpha}$,
and we have $\cup_{\alpha\in S^{'}}d_{\alpha}\in X_{B}^{i+1}$ from
the last requirement on $X_{B}^{i+1}$. Together we have that $\cup D\in X_{B}^{i+1}\subseteq X_{B}$.
\end{enumerate}
\end{proof}

\subsection{Main Lemma}
\begin{defn}
For every ordinal $\alpha\in H_{\theta}^{M[G\ast g\ast H]}$ define
$C_{\alpha+1}=\{\alpha\}$, and for a limit ordinal $\alpha\in H_{\theta}^{M[G\ast g\ast H]}$
define $C_{\alpha}$ to be some club in $\alpha$ which is the minimal
such one under $\vartriangleleft$. Notice that from the definition
of $\vartriangleleft$ it must be of size $cof(\alpha)$.
\end{defn}
\begin{lem}
\label{lem:club}For every model $X$ such that $M[G\ast g\ast H]\models X\preceq\langle H_{\theta}^{M[G\ast g\ast H]},\vartriangleleft\rangle$
and ordinal $\alpha\in X$ we have $C_{\alpha}\in X$.
\end{lem}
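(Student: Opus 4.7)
The plan is to exploit elementarity of $X$ in the structure $\langle H_{\theta}^{M[G\ast g\ast H]},\vartriangleleft\rangle$ together with the fact that $C_{\alpha}$ is \emph{uniquely} definable from $\alpha$ in this structure. Because $\vartriangleleft$ is part of the language of the structure, phrases of the form ``the $\vartriangleleft$-least $y$ such that $\varphi(y,\alpha)$'' are legitimate first-order formulas with parameter $\alpha$.

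First I would dispose of the successor case. If $\alpha=\beta+1$ then $C_{\alpha}=\{\beta\}$. The ordinal $\beta$ is definable from $\alpha$ as the unique ordinal whose successor is $\alpha$, so by elementarity the unique witness lies in $X$, giving $\beta\in X$; then $\{\beta\}\in X$ by a second (equally cheap) application of elementarity, since $\{\beta\}$ is definable from $\beta$.

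For the limit case I would write the definition of $C_{\alpha}$ explicitly as the formula $\psi(y,\alpha)$: \emph{``$y\subseteq\alpha$ is closed and unbounded in $\alpha$, and is $\vartriangleleft$-minimal among sets with this property."} Being a club subset of $\alpha$ is first-order expressible from $\alpha$ inside $H_{\theta}^{M[G\ast g\ast H]}$, since every relevant subset of $\alpha$ lies there. Existence of a witness follows from the fact that $\alpha$ itself is a club in $\alpha$; uniqueness follows from $\vartriangleleft$ being a well-ordering. Hence
\[
H_{\theta}^{M[G\ast g\ast H]}\models\exists!\,y\,\psi(y,\alpha),
\]
and by elementarity the unique $y$ realizing $\psi(\cdot,\alpha)$ in $X$ coincides with $C_{\alpha}$, so $C_{\alpha}\in X$. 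I do not foresee any real obstacle: the only technical point worth checking is that ``$y$ is a club in $\alpha$'' is first-order over $H_{\theta}^{M[G\ast g\ast H]}$, but this is standard since $\alpha<\theta$ and the relevant quantifiers range inside $H_{\theta}^{M[G\ast g\ast H]}$.
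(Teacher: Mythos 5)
Your proof is correct and follows essentially the same route as the paper's: split into the successor and limit cases, and in each case observe that $C_{\alpha}$ is the unique $\vartriangleleft$-least witness to a first-order formula with parameter $\alpha$, so elementarity of $X$ in $\langle H_{\theta}^{M[G\ast g\ast H]},\vartriangleleft\rangle$ puts it in $X$. Nothing further is needed.
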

\begin{proof}
First assume $\alpha$ is a successor ordinal. Then $\langle H_{\theta}^{M[G\ast g\ast H]},\vartriangleleft\rangle\models\exists\beta:\beta+1=\alpha$,
so from elementarity $X\models\exists\beta:\beta+1=\alpha$. So denote
the corresponding $\beta$ by $\alpha^{'}\in X$. Then $X\models\alpha^{'}+1=\alpha$
and from elementarity $\langle H_{\theta}^{M[G\ast g\ast H]},\vartriangleleft\rangle\models\alpha^{'}+1=\alpha$.
So $C_{\alpha}=\{\alpha^{'}\}\in X$. Assume now that $\alpha$ is
a limit ordinal. Then $\langle H_{\theta}^{M[G\ast g\ast H]},\vartriangleleft\rangle\models\text{"there is a club in \ensuremath{\alpha}"}$.
Then from elementarity we have $X\models\text{"there is a club in \ensuremath{\alpha}"}$.
Denote the minimal such one under the well ordering by $C$. But then
from elementarity it must be the minimal in $H_{\theta}^{M[G\ast g\ast H]}$,
which means it is $C_{\alpha}$ and we have $C_{\alpha}=C\in X$.
\end{proof}
\begin{lem}
There is some sequence $\langle S_{\alpha}|\alpha<\kappa^{++}\rangle=S\subseteq(P_{j(\kappa)^{+}}(H_{\theta}))^{M[G\ast g\ast H]}$
for some large enough $\theta$, which satisfies the following properties: 
\begin{enumerate}
\item $\forall B\in(P_{j(\kappa)^{+}}(j(\kappa^{++})))^{M[G\ast g\ast H]}:\:\exists A\in S:\:B\subseteq A$
.
\item If $A\in S$, $A^{*}\subseteq A\cap j(\kappa^{++})$, and for every
$a\in A^{*}$ there is some $B_{a}\in S$ indexed before $A$ and
$a\in B_{a}$, then there are $\{A_{\alpha}^{**}|\alpha<\kappa\}\subset S$
indexed before $A$ such that $A^{*}\subseteq\cup_{\alpha<\kappa}A_{\alpha}^{**}$.
\end{enumerate}
\end{lem}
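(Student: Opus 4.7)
The plan is to construct $S$ by transfinite recursion of length $\kappa^{++}$, working in $V[G*g]$, and to arrange both properties by interleaving tasks with a bookkeeping function. Before starting, we need the cardinal-arithmetic input that $(P_{j(\kappa)^{+}}(j(\kappa^{++})))^{M[G*g*H]}$ has cofinality at most $\kappa^{++}$ under $\subseteq$ as computed in $V[G*g]$. This should come from combining $|j(\kappa^{++})|^{V[G*g]}\leq\kappa^{++}$ (a consequence of the extender $E$ having width $\kappa^{++}$ together with GCH in $V$) with the existence of a good model above any given $j(\kappa)^{+}$-sized subset, supplied by the previous lemma. Once this is verified, fix a cofinal enumeration $\langle B_\alpha\mid\alpha<\kappa^{++}\rangle$ of that family.

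Fix also a surjection $\pi\colon\kappa^{++}\to\kappa^{++}\times\kappa^{++}$ hitting each pair cofinally often. At stage $\alpha$, writing $\pi(\alpha)=(\beta,\gamma)$, I would form $S_\alpha$ as a good model containing $B_\alpha$, the set $\bigcup_{\delta<\alpha}(S_\delta\cap j(\kappa^{++}))$ (which has $M[G*g*H]$-size at most $j(\kappa)$ because $|\alpha|<j(\kappa)$), and an additional witness servicing the bookkeeping task $(\beta,\gamma)$ --- namely a good model that absorbs an appropriate previously-indexed union indexed from $S_\beta$. Property (1) is then immediate: given $B$, pick $\alpha$ with $B\subseteq B_\alpha\subseteq S_\alpha$.

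The main obstacle is property (2). Given $A=S_\alpha$ and $A^{*}\subseteq A\cap j(\kappa^{++})$ satisfying the hypothesis, one must produce $\kappa$-many $A^{**}_\xi\in S$ indexed before $\alpha$ with $A^{*}\subseteq\bigcup_{\xi<\kappa}A^{**}_\xi$. The reduction argument is as follows: $|A^{*}|\leq|A|=j(\kappa)$, and each $a\in A^{*}$ lies in an earlier $B_a=S_{\beta(a)}$. List $A^{*}=\{a_\xi:\xi<j(\kappa)\}$ and consider the increasing partial unions $U_\eta=\bigcup_{\xi<\eta}B_{a_\xi}$ for $\eta<j(\kappa)$. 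Each $U_\eta$ is a subset of $j(\kappa^{++})$ of $M[G*g*H]$-size at most $j(\kappa)$. The plan is to use the closure property \enuref{closed} of good models, together with elementarity in $\langle H_\theta^{M[G*g*H]},\vartriangleleft\rangle$, to extract a $\kappa$-cofinal sequence of indices $\eta_0<\eta_1<\cdots<\eta_\xi<\cdots$ with $\bigcup_{\xi<\kappa}U_{\eta_\xi}\supseteq A^{*}$, and then appeal to the bookkeeping arrangement to produce $S_{\gamma_\xi}\supseteq U_{\eta_\xi}$ with $\gamma_\xi<\alpha$.

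The hardest technical point is coordinating the bookkeeping so that each cover request arising at stage $\alpha$ has actually been serviced at some earlier stage. I expect this to go through because $|A|^{V[G*g]}$ and hence $|A^{*}|^{V[G*g]}$ are bounded by $\kappa^{+}$, while the sequence has length $\kappa^{++}$, which leaves ample room for every request to be completed at some $\gamma<\alpha$ whenever $\alpha$ has $V[G*g]$-cofinality $\kappa^{++}$; closing off the construction at such ``good'' stages and using the $\pi$-schedule to handle the remaining $\alpha$ with $\mathrm{cof}(\alpha)<\kappa^{++}$ should give the claim.
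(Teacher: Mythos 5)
Your construction of the sequence already breaks down: you require $S_\alpha$ to be a good model containing $\bigcup_{\delta<\alpha}(S_\delta\cap j(\kappa^{++}))$, but a good model must be an \emph{element} of $M[G\ast g\ast H]$ of size $j(\kappa)$ there, and for $\alpha\geq\kappa^{+}$ this union is a union of $\kappa^{+}$-many sets; $M[G\ast g\ast H]$ is only closed under $\kappa$-sequences from $V[G\ast g]$ (\lemref{M-k-sequence}), so the union need not lie in $M[G\ast g\ast H]$ at all. (If it always did, property (2) would be nearly trivial.) The paper's construction is careful on exactly this point: it adds models in blocks of length $\kappa^{+}$, and each new model is only required to contain the union of the $\leq\kappa$-many models built so far \emph{within its own block}, which is legitimate by \lemref{M-k-sequence}. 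Relatedly, your cardinality estimate $|A^{*}|^{V[G\ast g]}\leq\kappa^{+}$ is wrong: $|A|=j(\kappa)$ in $M[G\ast g\ast H]$ and $|j(\kappa)|^{V[G\ast g]}=\kappa^{++}$.

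The deeper gap is in property (2), which is the entire content of the lemma. Your plan is to enumerate $A^{*}$, form increasing partial unions $U_{\eta}$ of the witnesses $B_{a_{\xi}}$, and ``extract a $\kappa$-cofinal sequence'' of them covering $A^{*}$; but the enumeration has length $j(\kappa)$ (of $V[G\ast g]$-cardinality $\kappa^{++}$ and cofinality $>\kappa$), so no $\kappa$-length subfamily of the $U_{\eta}$ can exhaust it --- the whole difficulty is precisely that the bound in (2) is $\kappa$ and not $\kappa^{+}$ or $\kappa^{++}$. Nor can bookkeeping rescue this: $A^{*}$ is an arbitrary subset of $A\cap j(\kappa^{++})$ handed to you after the construction is finished, and there are more than $\kappa^{++}$-many candidates, so no schedule of length $\kappa^{++}$ can pre-service every request. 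The paper proves (2) by a mechanism absent from your proposal: induct on $\alpha^{*}=\sup X^{*}$; in the case $\mathrm{cof}(\alpha^{*})\geq\kappa^{+}$, use property \enuref{unbounded} of good models and \lemref{union-j-assoc} to show that if no earlier model contains an ordinal $\geq\alpha^{*}$ then $M[G\ast g\ast H]\models\mathrm{cof}(\alpha^{*})\geq j(\kappa)^{+}$, a contradiction; otherwise take the minimal such bound $\alpha^{**}$ in a model $A$, use the canonical clubs $C_{\beta}$ (\lemref{club}) to force $\alpha^{*}=\alpha^{**}$, and then walk down the clubs finitely many steps to show $X^{*}$ is contained in a \emph{single} model $B$ from the same $\kappa^{+}$-block with $X\cap j(\kappa^{+})<B\cap j(\kappa^{+})$. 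Without some version of this argument (or another genuine replacement), your proof of (2) does not go through.
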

\begin{proof}
All of our $S_{\alpha}$ will be good models. Since $|(P_{j(\kappa)^{+}}(j(\kappa^{++}))^{M[G\ast g\ast H]}|=\kappa^{++}$,
denote by $t^{'}\in V[G\ast g]$ a function from $\kappa^{++}$ onto
$(P_{j(\kappa)^{+}}(j(\kappa^{++}))^{M[G\ast g\ast H]}$. We will
define $S$ by induction, where in each step we add $\kappa^{+}$
elements. So assume we have done all the steps up to step $\alpha$,
and denote the sequence built so far by $S^{'}$. We want to choose
some $\langle A_{\mu}|\mu<\kappa^{+}\rangle$ as the next $\kappa^{+}$
elements of the sequence. Choose $A_{0}$ to be $X_{t^{'}(\alpha)}$(This
guarantees that after $\kappa^{++}$ steps we have the first property
of $S$). Now choose the rest of $\langle A_{\mu}|\mu<\kappa^{+}\rangle$
- assuming we have chosen up to $\mu$, choose $A_{\mu}$ to be $X_{\cup_{\mu^{'}<\mu}A_{\mu^{'}}\cup\{j(\mu)\}}$.
Notice that it is possible since for every $\mu<\kappa^{+}$ the sequence
$\langle A_{\mu^{'}}|\mu^{'}<\mu\rangle$ is of length $\leq\kappa$,
so from \lemref{M-k-sequence} it is in $M[G\ast g\ast H]$ and so
is its union. From the definition we have $A_{\mu^{'}}\subseteq A_{\mu}$
for every $\mu^{'}<\mu$, and also $j(\mu)\in A_{\mu}$. Notice that
since $\forall\mu<\kappa^{+}:\,j(\mu)\in A_{\mu}$, from \lemref{1}
$\langle A_{\mu}\cap j(\kappa^{+})|\mu<\kappa^{+}\rangle$ is unbounded
in $j(\kappa)^{+}$. Now simply add $\langle A_{\mu}|\mu<\kappa^{+}\rangle$
to the sequence built so far. 

It is left to show that the last property holds. Assume by induction
that $S^{'}$ satisfies it. Let $X$ be one of the $A_{\mu}$ and
$X^{*}\subseteq X$ as in the last property description, i.e. for
every $a\in X^{*}$ there is some $B_{a}\in S^{'}$ such that $a\in B_{a}$
(notice we ignore the case $B_{a}=A_{\mu^{'}}$ for $\mu^{'}<\mu$,
since we can just add all the $\{A_{\mu^{'}}\}_{\mu^{'}<\mu}$ to
the $A_{\alpha}^{**}$'s). Set $\alpha^{*}=\cup X^{*}$. We will show
this by induction on $\alpha^{*}$. If $\alpha^{*}$ is not a limit
ordinal, use the property for $X^{*}$ without the last element and
then add $B_{\alpha^{*}}$ to the $A_{\alpha}^{**}$'s. So assume
$\alpha^{*}$ is a limit ordinal. If $cof(\alpha^{*})\leq\kappa$
take an unbounded sequence which witnesses that, and use the property
for each of the elements of the sequence, and then just take all the
generated $A_{\alpha}^{**}$. Since each set is of size $\leq\kappa$
and we have $\leq\kappa$ such sets we are still $\leq\kappa$. So
we can assume $cof(\alpha^{*})\geq\kappa^{+}$. Notice that $M[G\ast g\ast H]\models cof(\alpha^{*})\leq j(\kappa)$
since $X\cap\alpha^{*}$ is unbounded in $\alpha^{*}$ and of size
at most $j(\kappa)$ in $M[G\ast g\ast H]$.

First, assume there is no ordinal in some submodel in $S^{'}$ that
bounds the sequence. We get: 
\[
\alpha^{*}=\cup X^{*}=\cup_{a\in X^{*}}\eta_{B_{a},a}=\cup_{a\in X^{*}}j(\eta_{B_{a},a}^{'})=j(\cup_{a\in X^{*}}\eta_{B_{a},a}^{'})
\]
 The second equality is since $\leq$ is trivial, and $\geq$ is because
if not then we have $\eta_{B_{a},a}\geq\alpha^{*}$ in contradiction
to our assumption that $\alpha^{*}$ is above all of the submodels.
The fourth equality is from \lemref{union-j-assoc} (where the $\eta_{B_{a},a}^{'}$'s
are of cofinality $\geq\kappa^{+}$ since if they aren't, so is the
cofinality of the $\eta_{B_{a},a}$'s, of which the limit is $\alpha^{*}$
which is of cofinality $\geq\kappa^{+}$). From this we get that $M[G\ast g\ast H]\models cof(\alpha^{*})\geq j(\kappa)^{+}$
which is impossible since we have seen that $M[G\ast g\ast H]\models cof(\alpha^{*})\leq j(\kappa)$.

So we can assume that the sequence is bounded by some element in a
model in $S^{'}$. Denote the minimal such element by $\alpha^{**}$
and its model by $A$. 

Assume first that $\alpha^{*}<\alpha^{**}$. From \lemref{club} we
have $C_{\alpha^{**}}\in A$. Then there is some $\rho<j(\kappa^{+})$
such that $\alpha^{*}\leq C_{\alpha^{**}}(\rho)$. Pick some $A\subseteq B\in S^{'}$
that was added with $A$ such that $\rho<B\cap j(\kappa^{+})$. But
then since $\rho,C_{\alpha^{**}}\in B$ we have $C_{\alpha^{**}}(\rho)\in B$
which is smaller than $\alpha^{**}$ and $\geq\alpha^{*}$ in contradiction
to the minimality of $\alpha^{**}$.

So $\alpha^{*}=\alpha^{**}$. Pick some $B\in S^{'}$ such that $A\subseteq B$
and $X\cap j(\kappa^{+})<B\cap j(\kappa^{+})$. Now build the following
for each $a\in X^{*}$ - start from $\beta_{0}=\alpha^{*}$. Given
$\beta_{i}>a$, take $\beta_{i+1}=min(C_{\beta_{i}}\setminus a)$.
The process stops when we reach $a$, and it must stop because otherwise
we have a set of decreasing ordinals with no minimum. Inductively
we can see that all the $\beta_{i}$ are in $X$, since if $\beta_{i},C_{\beta_{i}},a\in X$
then from elementarity $\beta_{i+1}$ is definable in $X$ (and $\beta_{0}=\alpha^{*}$
is in $X$ from property \ref{enu:closed} of good models). Notice
that inductively, each $\beta_{i}$ is in $B$ - since $\beta_{i+1}\in C_{\beta_{i}}$
and is in $X$, then $\beta_{i+1}=C_{\beta_{i}}(\alpha)$ for some
$\alpha<X\cap j(\kappa^{+})<B\cap j(\kappa^{+})$, and therefore it
is in $B$ (Notice that all the ordinals which we are looking at are
$<j(\kappa^{++})$, and so are their cofinalities). Hence $X^{*}\subseteq B$
as we wanted. 
\end{proof}
We will now build by induction a sequence $\langle f_{\alpha}|\alpha<\kappa^{++}\rangle$
of one to one partial functions from $j(\kappa)$ to $j(\kappa^{++})$
which are all compatible. The induction will also guarantee the following
properties:
\begin{enumerate}
\item $S_{\alpha}\cap j(\kappa^{++})=Rng(f_{\alpha})$.
\item $f_{\alpha}\in M[G\ast g\ast H]$.
\end{enumerate}
First, for every $\alpha<\kappa^{++}$, denote $S_{\alpha}^{'}=S_{\alpha}\cap(\cup_{\beta<\alpha}(S_{\beta}\cap j(\kappa^{++})))$
and $S_{\alpha}^{''}=(S_{\alpha}\cap j(\kappa^{++}))\setminus S_{\alpha}^{'}$.
From the last property of $S$ there are some $\{\delta_{\gamma}^{\alpha}|\gamma<\kappa\}\subseteq\alpha$
such that $S_{\alpha}^{'}\subseteq\cup_{\gamma}S_{\delta_{\gamma}^{\alpha}}$.
This means that from \lemref{M-k-sequence} we have $S_{\alpha}^{'}\in M[G\ast g\ast H]$,
which means that also $S_{\alpha}^{''}\in M[G\ast g\ast H]$. Since
the size of each $S_{\alpha}^{''}$ in $M[G\ast g\ast H]$ is at most
$j(\kappa)$ and we know $|j(\kappa)|=\kappa^{++}$, we can divide
$j(\kappa)$ into $\kappa^{++}$-many disjoint subsets, each one in
$M[G\ast g\ast H]$, where the $\alpha$-th is of size $|S_{\alpha}^{''}|_{M[G\ast g\ast H]}$,
and let $t$ be the function which describes it - i.e. $t:\kappa^{++}\mapsto P^{M[G\ast g\ast H]}(j(\kappa))$.
So assume we have built up to stage $\alpha$, and now we want to
build $f_{\alpha}$. Notice that previous $f_{\beta}$'s already define
sources for every element of $S_{\alpha}^{'}$. Since from the induction
$\forall\gamma<\kappa:\,f_{\delta_{\gamma}^{\alpha}}\in M[G\ast g\ast H]$
from \lemref{M-k-sequence} we have $f_{\alpha}^{'}=(\cup f_{\delta_{\gamma}^{\alpha}})|_{S_{\alpha}^{'}}\in M[G\ast g\ast H]$.
Define the partial function $f_{\alpha}^{''}$ to map $f_{\alpha}^{''}(t(\alpha)(\xi))$
to the $\xi$-th element of $S_{\alpha}^{''}$. By induction the sources
of $f_{\alpha}^{''}$ are all unique from previous $f_{\beta}$ since
we are always using $t$ to find new unused sources in $j(\kappa)$.
So this covers all of $S_{\alpha}^{''}$, and so if we define $f_{\alpha}=f_{\alpha}^{'}\cup f_{\alpha}^{''}\in M[G\ast g\ast H]$
it covers all of $S_{\alpha}\cap j(\kappa^{++})$. 

Finally, we can define $f=\cup_{\alpha<\kappa^{++}}f_{\alpha}$. Notice
that $f$ is a one to one function from $j(\kappa)$ onto $j(\kappa^{++})$.
From the first property of $S$ it has the property that for every
$Y\in(P_{j(\kappa)^{+}}(j(\kappa^{++})))^{M[G\ast g\ast H]}$ there
is some $Z\subseteq j(\kappa)$ such that:
\begin{enumerate}
\item $Y\subseteq f^{''}Z$.
\item $f|_{Z}\in M[G\ast g\ast H]$.
\end{enumerate}
This is exactly the property required by \lemref{build-generic},
and so we are done.

\newpage{}

\section{\label{part:3}$Add(\kappa,\kappa^{+3})$}

\subsection{Building Very Good Models}

Now we will see how to enhance the proof so it would work for $Add(\kappa,\kappa^{+3})$.
For that we will redefine $S$.
\begin{defn}
Define $I=\prod_{l\in3}(I_{l}^{0}\times I_{l}^{1})$ where $I_{l}^{0}=j^{''}\kappa^{+l+1}\setminus j(\kappa^{+l})$
and $I_{l}^{1}=j(\kappa^{+l+1})$. This means we can look at every
$i\in I$ as $i=(i(0),i(1),i(2))$. Order $I$ with co-lexicographical
order (i.e. where $l=2$ is the most significant), where each $j(\kappa^{+l+1})\setminus j(\kappa^{+l})\times j(\kappa^{+l+1})$
is ordered lexicographically. Denote this order by $\lessdot$.
\end{defn}
\begin{defn}
For $i\in I,l<3,\beta_{0}\in I_{l}^{0},\beta_{1}\in I_{l}^{1}$ define
$c(i,l,(\beta_{0},\beta_{1}))$ to be $i^{'}\in I$ which is equal
to $i$, except at coordinate $l$ where its value is changed to $(\beta_{0},\beta_{1})$.
\end{defn}
\begin{defn}
For every ordinal $\alpha\in H_{\theta}^{M[G\ast g\ast H]}$ let $t_{\alpha}$
be the isomorphism in $H_{\theta}^{M[G\ast g\ast H]}$ from $\alpha+1$
to $|\alpha|^{M[G\ast g\ast H]}$ which is minimal under $\vartriangleleft$.
\end{defn}
\begin{lem}
\label{lem:contain-t-alpha}For every model $X$ such that $M[G\ast g\ast H]\models X\preceq\langle H_{\theta}^{M[G\ast g\ast H]},\vartriangleleft\rangle$
and ordinal $\alpha\in X$ we have $t_{\alpha}\in X$.
\end{lem}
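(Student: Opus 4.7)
The plan is to follow the template of the proof of \lemref{club}, exploiting the fact that $t_{\alpha}$ is first-order definable in $\langle H_{\theta}^{M[G\ast g\ast H]},\vartriangleleft\rangle$ from the single parameter $\alpha$. Indeed, the definition of $t_{\alpha}$ says that $t_{\alpha}$ is the $\vartriangleleft$-least bijection whose domain is $\alpha+1$ and whose range is a cardinal; this is a formula $\psi(y,\alpha)$ in the language with the well-order $\vartriangleleft$, with $\alpha$ as the only parameter.

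First I would check that for $\theta$ large enough, the structure $\langle H_{\theta}^{M[G\ast g\ast H]},\vartriangleleft\rangle$ actually contains a witness, so that $\exists y\,\psi(y,\alpha)$ holds there: since $\alpha\in H_{\theta}^{M[G\ast g\ast H]}$, also $|\alpha|^{M[G\ast g\ast H]}\in H_{\theta}^{M[G\ast g\ast H]}$ together with some bijection from $\alpha+1$ onto it, so the set of witnesses is nonempty. Because $\vartriangleleft$ is a well-order, there is a unique $\vartriangleleft$-least such witness, and by construction this least witness is exactly $t_{\alpha}$.

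Next I would invoke elementarity. Since $\alpha\in X$ and $X\preceq\langle H_{\theta}^{M[G\ast g\ast H]},\vartriangleleft\rangle$, we get $X\models\exists y\,\psi(y,\alpha)$, and moreover $X$ contains the $\vartriangleleft$-least such $y$; call it $t$. Elementarity downward and upward gives that $t$ is in fact the $\vartriangleleft$-least witness as computed in $H_{\theta}^{M[G\ast g\ast H]}$, so $t=t_{\alpha}$ and hence $t_{\alpha}\in X$.

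There is no real obstacle here, only a bookkeeping point: one should define $t_{\alpha}$ purely as "the $\vartriangleleft$-minimal bijection from $\alpha+1$ onto some cardinal" so that the definition genuinely uses only $\alpha$ as a parameter, without having to put $|\alpha|^{M[G\ast g\ast H]}$ into $X$ a priori. Once this is observed, the argument is a verbatim copy of the structure of \lemref{club}.
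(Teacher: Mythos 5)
Your proof is correct and is essentially the paper's argument: both rest on the fact that $t_{\alpha}$ is the $\vartriangleleft$-least witness to a formula with $\alpha$ as the only parameter, so elementarity of $X$ in $\langle H_{\theta}^{M[G\ast g\ast H]},\vartriangleleft\rangle$ puts it in $X$. The only (cosmetic) difference is that the paper first shows $|\alpha|^{M[G\ast g\ast H]}\in X$ as the least ordinal in bijection with $\alpha$ and then takes the $\vartriangleleft$-least isomorphism onto it, whereas you fold both steps into a single formula, as you note in your bookkeeping remark.
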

\begin{proof}
There is some $\beta$ in $X$ such that:
\[
X\models"\text{\ensuremath{\beta} is the minimal ordianl that is isomorphic \ensuremath{\alpha}}"
\]
But then from elementarity this is true in $H_{\theta}^{M[G\ast g\ast H]}$,
so $\beta=|\alpha|^{M[G\ast g\ast H]}$, and we get $|\alpha|^{M[G\ast g\ast H]}\in X$.
Notice that: 
\[
\langle H_{\theta}^{M[G\ast g\ast H]},\vartriangleleft\rangle\models"\text{there is an isomorphism from \ensuremath{\alpha} to \ensuremath{|\alpha|^{M[G\ast g\ast H]}}}"
\]
Then from elementarity we have: 
\[
X\models"\text{there is an isomorphism from \ensuremath{\alpha} to \ensuremath{|\alpha|^{M[G\ast g\ast H]}}}"
\]
Denote the minimal such isomorphism under the well ordering by $t$.
But then from elementarity it must be the minimal in $H_{\theta}^{M[G\ast g\ast H]}$,
which means it is $t_{\alpha}$ and we have $t_{\alpha}=t\in X$.
\end{proof}
\begin{defn}
For a function $t$ and a set $A$ we will define $t^{''}A=\{t(a)|a\in A\cap dom(t)\}$.
\end{defn}
\begin{rem}
We will compose functions of the form $t_{\alpha},t_{\beta}$, where
$t_{\alpha}\circ t_{\beta}=\{(a,b)|a\in dom(t_{\beta}),t_{\beta}(a)\in dom(t_{\alpha}),t_{\alpha}(t_{\beta}(a))=b\}$.
\end{rem}
\begin{defn}
\label{def:q}For $A$ an infinite set of ordinals let $q_{A}:A\rightarrow P(A)$
such that every two sets in the image are disjoint, and each is of
size $|A|$ (the existence of such mapping is immediate from $|A\times A|=|A|$).
We will also ask that $\forall\alpha\in A:\forall\beta\in q_{A}(\alpha):\alpha<\beta$,
which will be attainable for the $A$'s we will use by simply throwing
all elements of $q_{A}(\alpha)$ which are $\leq\alpha$. We will
index each set in the image by $A$ (we can do it since they have
the same cardinality).
\end{defn}
\begin{rem}
Notice that we will not ask that $q_{A}\in H_{\theta}^{M[G\ast g\ast H]}$
like we did with $t_{\alpha}$ since we will have cases where $A\notin M[G\ast g\ast H]$.
\end{rem}
\begin{defn}
\label{def:p}For every $M[G\ast g\ast H]$-cardinal $\alpha$ let
$p_{\alpha}$ be an onto function from $(\alpha^{j(\kappa)})^{M[G\ast g\ast H]}$
to $(P_{j(\kappa^{+})}\alpha)^{M[G\ast g\ast H]}$ (in $M[G\ast g\ast H]$)
such that each element in the image appears unboundedly many times.
\end{defn}
Notice that in this part we will use $p_{\alpha}$ for $\alpha$ of
the form $j(\kappa^{+l+1})$ for $l\in3$, so since we assume GCH
we get that $(\alpha^{j(\kappa)})^{M[G\ast g\ast H]}=\alpha$.

\begin{defn}
$X_{i}\in(P_{j(\kappa^{+})}(H_{\theta}))^{M[G\ast g\ast H]}$ will
be called a very good model for $i\in I$ if:
\begin{enumerate}
\item $M[G\ast g\ast H]\models X_{i}\preceq\langle H_{\theta}^{M[G\ast g\ast H]},\triangleleft\rangle$.
\item $M[G\ast g\ast H]\models|X_{i}|=j(\kappa)$.
\item \label{enu:very-good-j_k}$j(\kappa)\subseteq X_{i}$.
\item $j(\kappa),i\in X_{i}$ (from elementarity this also means that $j(\kappa^{+}),j(\kappa^{+2})\in X_{i}$).
\item \label{enu:very-good-contains-p}For every $l\in3$ if there is $\beta<i(l)_{1}$
such that $i(l)_{1}\in q_{I_{l}^{1}}(\beta)$ in the index $\beta^{'}$
(i.e. $i(l)_{1}=q_{I_{l}^{1}}(\beta)_{\beta^{'}}$), then $p_{j(\kappa^{+l+1})}(\beta^{'})\subseteq X_{i}$.
\item \label{enu:very-good-closed}$X_{i}$ is closed for ordinal sequences
of cofinalities $>\omega$, i.e. if $D\subseteq X_{i}$ is an increasing
ordinal sequence of cofinality $>\omega$ then $\cup D\in X_{i}$.
\end{enumerate}
\end{defn}
\begin{lem}
\label{lem:very-good-inter}For every $i\in I$ there is a good model
$X_{i}$ for $i$, such that for every $l\in3$ if there is $\beta_{0}<i(l)_{0}$
such that $i(l)_{0}\in q_{I_{l}^{0}}(\beta_{0})$ and $\beta_{1}<i(l)_{1}$
such that $i(l)_{1}\in q_{I_{l}^{1}}(\beta_{1})$, then $X_{c(i,l,(\beta_{0},\beta_{1}))}\subseteq X_{i}$.
\end{lem}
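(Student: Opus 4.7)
The plan is to construct $\langle X_i \mid i \in I\rangle$ by transfinite recursion along the well-order $\lessdot$. For fixed $i$ and $l \in 3$, the pair $(\beta_0,\beta_1)$ demanded in the conclusion, if it exists, is uniquely determined, since by \defref{q} the image sets of $q_{I_l^0}$ (respectively $q_{I_l^1}$) are pairwise disjoint. Consequently, at stage $i$ there are at most three earlier models $X_{c(i,l,(\beta_0,\beta_1))}$ whose inclusion as subsets must be arranged; each is already defined by the induction hypothesis, because altering coordinate $l$ while preserving the strictly more significant coordinates strictly decreases $i$ in the colexicographic order.

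At stage $i$, invoke Löwenheim-Skolem-Tarski inside $M[G\ast g\ast H]$ to produce a seed $X_i^0 \preceq \langle H_\theta^{M[G\ast g\ast H]},\triangleleft\rangle$ of $M[G\ast g\ast H]$-cardinality $j(\kappa)$ containing as subsets: the ordinal $j(\kappa)$; the parameter $i$; for each $l \in 3$ for which the index $\beta'$ of condition \enuref{very-good-contains-p} exists, the set $p_{j(\kappa^{+l+1})}(\beta')$ (which, being an element of $(P_{j(\kappa^{+})}(j(\kappa^{+l+1})))^{M[G\ast g\ast H]}$, has $M[G\ast g\ast H]$-cardinality at most $j(\kappa)$); and each of the at most three predecessor models $X_{c(i,l,(\beta_0,\beta_1))}$ recovered from the recursion, each of $M[G\ast g\ast H]$-cardinality $j(\kappa)$. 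Since only finitely many subsets of size at most $j(\kappa)$ need be absorbed, the seed can indeed be chosen of size $j(\kappa)$.

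Then iterate $\omega$ times exactly as in the construction of good models from \partref{2}: at step $n+1$, enumerate the ordinals of $X_i^n$ as $\langle x_\alpha^n \mid \alpha < \psi_n\rangle$ and extend to an elementary submodel $X_i^{n+1}$ of size $j(\kappa)$ that also contains $\bigcup_{\alpha < \beta} x_\alpha^n$ for every $\beta \leq \psi_n$. Set $X_i = \bigcup_{n < \omega} X_i^n$. The sequence $\langle X_i^n \mid n < \omega\rangle$ is an $\omega$-sequence of elements of $M[G\ast g\ast H]$, so by \lemref{M-k-sequence} it lies in $M[G\ast g\ast H]$, and hence $X_i \in M[G\ast g\ast H]$.

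All conditions of a very good model together with the recursive subset requirement are built into the seed, and elementarity and cardinality are preserved through the chain. The closure condition \enuref{very-good-closed} is verified exactly as in \partref{2} by applying Fodor's lemma to the regressive map $d_\alpha \mapsto \min\{n : d_\alpha \in X_i^n\}$, which stabilizes on a stationary subsequence whose union the next stage absorbs. The one genuinely new piece of bookkeeping relative to \partref{2} is the simultaneous inheritance, at each stage, of at most three smaller models and of the auxiliary $p$-sets into the seed; I expect this step to require the most care, but the size control makes it conceptually routine, and well-foundedness of the recursion is immediate from $\lessdot$ being a well-order.
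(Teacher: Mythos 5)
Your proposal is correct and follows essentially the same route as the paper's own proof: transfinite recursion along $\lessdot$, a Löwenheim--Skolem--Tarski seed absorbing $j(\kappa)$, the parameter $i$, the relevant $p$-sets and the (at most finitely many, uniquely determined) predecessor models, followed by an $\omega$-chain closing under initial-segment unions, with \lemref{M-k-sequence} giving membership in $M[G\ast g\ast H]$ and Fodor's lemma giving property \enuref{very-good-closed}. Your explicit observation that disjointness of the $q$-images makes the predecessors unique is exactly the justification the paper gives for the seed being well-defined.
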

\begin{proof}
We will build by induction on $I$. Assume we have built up to $i$,
and now build for $i$. Using Lowenheim-Skolem-Tarski build $X_{i}^{0}$
such that:
\begin{enumerate}
\item $M[G\ast g\ast H]\models X_{i}^{0}\preceq\langle H_{\theta}^{M[G\ast g\ast H]},\triangleleft\rangle$.
\item $M[G\ast g\ast H]\models|X_{i}^{0}|=j(\kappa)$.
\item $j(\kappa)\subseteq X_{i}^{0}$.
\item $j(\kappa),i\in X_{i}^{0}$.
\item For every $l\in3$ if there is $\beta<i(l)_{1}$ such that $i(l)_{1}\in q_{I_{l}^{1}}(\beta)$
in the index $\beta^{'}$ (i.e. $i(l)_{1}=q_{I_{l}^{1}}(\beta)_{\beta^{'}}$),
then $p_{I_{l}^{1}}(\beta^{'})\subseteq X_{i}^{0}$. Notice that this
is a reasonable assumption since $i(l)_{1}$ can appear at most once
in some $q_{I_{l}^{1}}(\beta)$.
\item For every $l\in3$ if there is $\beta_{0}<i(l)_{0}$ such that $i(l)_{0}\in q_{I_{l}^{0}}(\beta_{0})$
and $\beta_{1}<i(l)_{1}$ such that $i(l)_{1}\in q_{I_{l}^{1}}(\beta_{1})$,
then $X_{c(i,l,(\beta_{0},\beta_{1}))}\subseteq X_{i}^{0}$ (It is
already built by the induction because we only made the $l$ coordinate
smaller, so $c(i,l,(\beta_{0},\beta_{1}))\lessdot i$). Notice that
this is a reasonable assumption since $i(l)_{0},i(l)_{1}$ can appear
at most once in some $q_{I_{l}^{0}}(\beta_{0}),q_{I_{l}^{1}}(\beta_{1})$
respectively.
\end{enumerate}
Now work inductively - assume we have $X_{i}^{k}$. Denote by $\langle x_{\alpha}^{k}|\alpha<\psi_{k}\rangle$
an increasing ordinal sequence which enumerates the ordinals of $X_{i}^{k}$
(notice that $|\psi_{k}|\leq j(\kappa)$). Build $X_{i}^{k+1}$ with
Lowenheim-Skolem-Tarski such that:
\begin{enumerate}
\item $M[G\ast g\ast H]\models X_{i}^{k+1}\preceq\langle H_{\theta}^{M[G\ast g\ast H]},\triangleleft\rangle$.
\item $M[G\ast g\ast H]\models|X_{i}^{k+1}|=j(\kappa)$.
\item $X_{i}^{k}\subseteq X_{i}^{k+1}$.
\item $\forall\beta\leq\psi_{k}:\cup_{\alpha<\beta}x_{\alpha}^{k}\in X_{i}^{k+1}$.
\end{enumerate}
Finally, define $X_{i}=\cup_{k<\omega}X_{i}^{k}$. Notice that the
property we asked for in the lemma is directly from $X_{i}^{0}\subseteq X_{i}$,
so it is left to check that $X_{i}$ satisfies all the properties
of a very good model:
\begin{enumerate}
\item Notice that the sequence $\langle X_{i}^{k}|k<\omega\rangle$ is an
$\omega$-sequence of elements of $M[G\ast g\ast H]$, so from \lemref{M-k-sequence}
it is also in $M[G\ast g\ast H]$. Therefore so is $X_{i}$, and it
is a basic property that the union of an increasing sequence of elementary
submodels is also an elementary submodel.
\item From the induction we know that all of the $X_{i}^{k}$ are of size
$j(\kappa)$ in $M[G\ast g\ast H]$, and since there are only $\aleph_{0}$
so is their union.
\item $j(\kappa)\subseteq X_{i}^{0}\subseteq X_{i}$.
\item $j(\kappa),i\in X_{i}^{0}\subseteq X_{i}$
\item For every $l\in3$ if there is $\beta<i(l)_{1}$ such that $i(l)_{1}\in q_{I_{l}^{1}}(\beta)$
in the index $\beta^{'}$ (i.e. $i(l)_{1}=q_{I_{l}^{1}}(\beta)_{\beta^{'}}$),
then $p_{j(\kappa^{+l+1})}(\beta^{'})\subseteq X_{i}^{0}\subseteq X_{i}$.
\item Let $\langle d_{\alpha}|\alpha<\psi\rangle=D\subseteq X_{i}$ be some
increasing ordinal sequence of cofinality $>\omega$, i.e. $cof(\psi)>\omega$.
It is clear that it is enough to look at $D$ such that $\psi$ is
regular (otherwise just take an unbounded subsequence of $D$ of size
$cof(\psi)$), so we can assume that $\psi$ is uncountable. Define
the mapping which for $\alpha<\psi$ returns the $k$ such that $d_{\alpha}\in X_{i}^{k}$.
Notice that from $\alpha\geq\omega$ the function is regressive, so
from Fodor's lemma there is some stationary set $S^{'}\subseteq\psi$
and $k^{'}<\omega$ such that $\forall\alpha\in S^{'}:d_{\alpha}\in X_{i}^{k^{'}}$.
But then since $S^{'}$ is unbounded in $\psi$ we get that $\cup D=\cup_{\alpha\in S^{'}}d_{\alpha}$,
and we have $\cup_{\alpha\in S^{'}}d_{\alpha}\in X_{i}^{k+1}$ from
the last requirement on $X_{i}^{k+1}$. Together we have that $\cup D\in X_{i}^{k+1}\subseteq X_{i}$.
\end{enumerate}
\end{proof}
\begin{lem}
\label{lem:very-good-inter-usable}Let $i\in I$ and $Q\in(P_{j(\kappa^{+})}j(\kappa^{+l+1}))^{M[G\ast g\ast H]}$
(for some $l\in3$). Then there are $\beta_{0},\beta_{1}$ where $i(l)_{0}<\beta_{0}\in I_{l}^{0}$
($\beta_{0}$ arbitrarily large) and $i(l)_{1}<\beta_{1}\in I_{l}^{1}$
such that $X_{i}\cup Q\subseteq X_{c(i,l,(\beta_{0},\beta_{1}))}$.
\end{lem}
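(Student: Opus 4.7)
The plan is to realize $X_{i}\cup Q$ inside a single very good model of the form $X_{c(i,l,(\beta_{0},\beta_{1}))}$ by exploiting two independent features of the set-up: the $p$-coding built into \defref{p} together with property \enuref{very-good-contains-p} of very good models, and the monotonicity supplied by \lemref{very-good-inter}. Write $i':=c(i,l,(\beta_{0},\beta_{1}))$ for the candidate index; the two required inclusions $X_{i}\subseteq X_{i'}$ and $Q\subseteq X_{i'}$ will come from two different parts of the construction and will constrain $\beta_{1}$ from two directions simultaneously.

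First I would use \defref{p}: since $p_{j(\kappa^{+l+1})}$ maps onto $(P_{j(\kappa^{+})}j(\kappa^{+l+1}))^{M[G\ast g\ast H]}$, choose $\beta'$ in its domain with $p_{j(\kappa^{+l+1})}(\beta')=Q$. Recalling that by \defref{q} the set $q_{I_{l}^{1}}(i(l)_{1})$ is indexed by $I_{l}^{1}$, set $\beta_{1}:=q_{I_{l}^{1}}(i(l)_{1})_{\beta'}$; this automatically lies in $I_{l}^{1}$ strictly above $i(l)_{1}$. For $\beta_{0}$: since $q_{I_{l}^{0}}(i(l)_{0})$ has cardinality $|I_{l}^{0}|=\kappa^{+l+1}$ and $I_{l}^{0}$ has the same order type, $q_{I_{l}^{0}}(i(l)_{0})$ is unbounded in $I_{l}^{0}$, so I may pick $\beta_{0}\in q_{I_{l}^{0}}(i(l)_{0})$ arbitrarily large above $i(l)_{0}$.

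The two inclusions then drop out independently. For $X_{i}\subseteq X_{i'}$, apply \lemref{very-good-inter} to $i'$ with witnesses $\beta_{0}^{*}:=i(l)_{0}$ and $\beta_{1}^{*}:=i(l)_{1}$: by construction $i'(l)_{0}=\beta_{0}\in q_{I_{l}^{0}}(\beta_{0}^{*})$, $i'(l)_{1}=\beta_{1}\in q_{I_{l}^{1}}(\beta_{1}^{*})$, and $c(i',l,(\beta_{0}^{*},\beta_{1}^{*}))=i$, so $X_{i}\subseteq X_{i'}$. For $Q\subseteq X_{i'}$, invoke property \enuref{very-good-contains-p} for $X_{i'}$ at coordinate $l$ with $\beta:=i(l)_{1}<\beta_{1}=i'(l)_{1}$ and index $\beta'$: by our choice of $\beta_{1}$ we have $i'(l)_{1}=q_{I_{l}^{1}}(\beta)_{\beta'}$, so $p_{j(\kappa^{+l+1})}(\beta')\subseteq X_{i'}$, i.e.\ $Q\subseteq X_{i'}$.

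The only subtle point is the coordination of $\beta_{1}$, which must play two roles simultaneously: it must lie in $q_{I_{l}^{1}}(i(l)_{1})$ (so that \lemref{very-good-inter} applies and pulls $X_{i}$ into $X_{i'}$), and it must sit at the specific index $\beta'$ inside that set for which $p_{j(\kappa^{+l+1})}(\beta')=Q$ (so that property \enuref{very-good-contains-p} fires and pulls $Q$ into $X_{i'}$). This is precisely why the $q$- and $p$-systems were calibrated to be indexed against the same ordinal $|I_{l}^{1}|=j(\kappa^{+l+1})$. The coordinate $\beta_{0}$ is needed only to make \lemref{very-good-inter} applicable and is otherwise free to be taken arbitrarily large.
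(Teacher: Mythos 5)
Your proof is correct and is essentially the paper's own argument: you pick $\beta'$ coding $Q$ via $p_{j(\kappa^{+l+1})}$, set $\beta_{1}=q_{I_{l}^{1}}(i(l)_{1})_{\beta'}$ and $\beta_{0}\in q_{I_{l}^{0}}(i(l)_{0})$ arbitrarily large, then get $X_{i}\subseteq X_{i'}$ from \lemref{very-good-inter} and $Q\subseteq X_{i'}$ from property \enuref{very-good-contains-p}, exactly as in the paper. The only cosmetic difference is that the paper additionally takes its index $Q'$ above $i(l)_{1}$, a constraint that is not actually used, so your omission of it is harmless.
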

\begin{proof}
Let $i,Q,l$ as above. There is some $i(l)_{1}<Q^{'}<j(\kappa^{+l+1})$
such that $p_{j(\kappa^{+l+1})}(Q^{'})=Q$. So choose $\beta_{0}\in q_{I_{l}^{0}}(i(l)_{0})$
(as big as we want) and $\beta_{1}=q_{I_{l}^{1}}(i(l)_{1})_{Q^{'}}$.
Notice that if we denote $i^{'}=c(i,l,(\beta_{0},\beta_{1}))$, then
$i=c(i^{'},l,i(l))$. Rewriting with $i^{'}$ we get $i^{'}(l)_{0}\in q_{I_{l}^{0}}(i(l)_{0})$
and $i^{'}(l)_{1}=q_{I_{l}^{1}}(i(l)_{1})_{Q^{'}}$, so from property
\enuref{very-good-contains-p} of very good models we get $Q=p_{j(\kappa^{+l+1})}(Q^{'})\subseteq X_{i^{'}}$.
Also from \lemref{very-good-inter} we get $X_{i}=X_{c(i^{'},l,(i(l)_{0},i(l)_{1}))}\subseteq X_{i^{'}}$.
Together we have $X_{i}\cup Q\subseteq X_{i^{'}}=X_{c(i,l,(\beta_{0},\beta_{1}))}$.
\end{proof}

\subsection{Main Lemma}
\begin{defn}
Let $S,S^{'}$ be sequences indexed by $I$, where $S_{i}=X_{i}$
and $S_{i}^{'}=S_{i}\cap(i(2)_{0}+1)\cap(t_{i(2)_{0}}^{-1})^{''}(i(1)_{0}+1)\cap(t_{i(2)_{0}}^{-1}\circ t_{i(1)_{0}}^{-1})^{''}(i(0)_{0}+1)$.
\end{defn}
\begin{lem}
Let $i\in I$ and $X^{*}\subseteq S_{i}^{'}$. Suppose that for every
$a\in X^{*}$ we have some $i_{a}\in I$ such that $i_{a}\lessdot i$
and $a\in S_{i_{a}}$. Then there are some $\langle i_{\alpha}^{'}|\alpha<\kappa\rangle$
such that $X^{*}\subseteq\cup_{\alpha<\kappa}S_{i_{\alpha}^{'}}^{'}$,
and $\forall\alpha<\kappa:i_{\alpha}^{'}\lessdot i$.
\end{lem}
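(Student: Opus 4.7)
The plan is to imitate the proof of the analogous covering clause from the $\gamma = \kappa^{+2}$ argument in Section \ref{part:2}, now running a nested induction over the three coordinates of $i$ ordered co-lexicographically by $\lessdot$. For each $a \in X^{*}$ set $a_{2}=a$, $a_{1}=t_{i(2)_{0}}(a_{2})$ and $a_{0}=t_{i(1)_{0}}(a_{1})$, so that $(a_{0},a_{1},a_{2})$ lies coordinate-wise at or below $(i(0)_{0},i(1)_{0},i(2)_{0})$, and let $\alpha_{l}^{*}=\sup\{a_{l}:a\in X^{*}\}$ for $l\in 3$. The induction is on the triple $(\alpha_{2}^{*},\alpha_{1}^{*},\alpha_{0}^{*})$, treated in decreasing order of significance $l=2,1,0$.

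At each level $l$ I would run the same three-case analysis used in the main lemma of Section \ref{part:2}. For the successor case at level $l$, peel off the element realising the supremum, use the hypothesis to obtain its witness index $i_{a}\lessdot i$, invoke \lemref{very-good-inter-usable} to raise the non-$l$ coordinates of $i_{a}$ enough that the filtered set containing $a$ is realised (so that $a\in S_{i_{a}'}'$, not merely $a\in S_{i_{a}}$), and then apply the inductive hypothesis to the remainder. For $cof(\alpha_{l}^{*})\leq\kappa$, choose a $\kappa$-length cofinal sequence in the $l$-th projection of $X^{*}$ and union the at most $\kappa$ covering families produced by induction. For $cof(\alpha_{l}^{*})\geq\kappa^{+}$, split into the two subcases of the $\kappa^{+2}$ proof: if no prior very good model contributes a bound on the $a_{l}$'s, then the $\eta_{\cdot}$-witnesses together with \lemref{union-j-assoc} force $M[G\ast g\ast H]\models cof(\alpha_{l}^{*})\geq j(\kappa)^{+}$, contradicting the fact that $X^{*}$ produces an $M[G\ast g\ast H]$-cofinal trace of size $\leq j(\kappa)$; otherwise pick the minimal bounding ordinal $\alpha^{**}$ lying in a prior model $A$, rule out $\alpha_{l}^{*}<\alpha^{**}$ by the $C_{\alpha^{**}}$-club argument from \lemref{club}, and in the equality case run the iterated $\min(C_{\beta_{i}}\setminus a)$-chasing, using \lemref{contain-t-alpha} to keep the relevant $t$-isomorphisms inside the models, to place each $a$ inside a very good model indexed strictly below $i$ in coordinate $l$.

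Once the top coordinate $l=2$ is reduced, \lemref{very-good-inter-usable} applied at level $l=2$ lets us absorb any needed mid- and bottom-level data into a single new index $i'\lessdot i$, and we recurse into the level-$1$ subproblem and then into the level-$0$ subproblem. The main obstacle is twofold. First, each time we shrink coordinate $l$ we must not silently enlarge a coordinate of strictly lower significance; this is exactly why Definition \ref{def:q} requires every element of $q_{A}(\alpha)$ to lie strictly above $\alpha$, and why the induction must be set up so that invocations of \lemref{very-good-inter-usable} only ever raise coordinates of lower co-lexicographical significance than the one just reduced. Second, the covering must be by the filtered sets $S_{i_{\alpha}'}'$ and not merely by $S_{i_{\alpha}'}$, so for each $a$ the witness has to be replaced by a truly below-$i$ index $i_{\alpha}'$ both containing $a$ and whose own coordinates are large enough for the triple filtering to survive; this is exactly what combining \lemref{very-good-inter-usable} with property \enuref{very-good-contains-p} of very good models is designed to deliver.
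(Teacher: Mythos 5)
Your toolkit is the right one --- the three-way cofinality split, the contradiction via \lemref{union-j-assoc}, the minimal bounding ordinal followed by the $\min(C_{\beta_{k}}\setminus\cdot)$ walk, and \lemref{very-good-inter-usable} together with property \enuref{very-good-contains-p} --- but the induction is organized around the wrong object, and the step you omit is the one that makes the lemma work. The paper does not induct on the triple of suprema of the projections of the whole of $X^{*}$; it first partitions $X^{*}=X_{0}^{*}\cup X_{1}^{*}\cup X_{2}^{*}$ according to the \emph{most significant coordinate at which $i_{a}$ differs from $i$}, and then runs a single one-level induction on $\cup T^{''}X_{l}^{*}$ separately for each piece, where $T$ is the composition of collapses determined by $i$ down to level $l$. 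This partition is not cosmetic. In the case $cof(\alpha_{l}^{*})\geq\kappa^{+}$ the dichotomy is: either $\alpha_{l}^{*}>i^{'}(l)_{0}$ for every $i^{'}\lessdot i$ whose first disagreement with $i$ is at coordinate $l$, in which case $\alpha_{l}^{*}=\cup_{a}T(a)=\cup_{a}i_{a}(l)_{0}=j(\cup_{a}j^{-1}(i_{a}(l)_{0}))$ yields the cofinality contradiction; or some such $i^{'}$ bounds it. Both horns require every witness in play to agree with $i$ strictly above level $l$ (so that $T(a)\leq i_{a}(l)_{0}$, since the collapses defining $T$ are then the same for $i$ and $i_{a}$) and to differ from $i$ at level $l$ (so that the case hypothesis gives $i_{a}(l)_{0}<\alpha_{l}^{*}$). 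For an $a$ whose witness agrees with $i$ at level $l$ and above, $i_{a}(l)_{0}=i(l)_{0}\geq\alpha_{l}^{*}$ and the computation collapses, so neither horn applies; your level-$2$ pass over the undivided $X^{*}$ therefore says nothing about the elements whose witnesses first differ at coordinates $1$ or $0$.

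The partition is also what keeps the output indices below $i$. At the end of the level-$l$ case the covering index is manufactured from some $i^{'}\lessdot i$ by enlarging coordinates strictly below $l$ (to capture the $\delta$'s and to re-realize the $t$-filtration so that the cover is by $S^{'}$ rather than $S$); this stays $\lessdot i$ only because $i^{'}\lessdot i$ is witnessed at coordinate $l$ itself, which you only know after partitioning. Note also that your stated precaution is inverted: raising coordinates of \emph{lower} significance is harmless and is exactly what \lemref{very-good-inter-usable} does, while what must be protected is the witnessing coordinate and everything above it; and the requirement in \defref{q} that $q_{A}(\alpha)$ lie above $\alpha$ serves to make the new index dominate the old one, so that $X_{i}\subseteq X_{c(i,l,(\beta_{0},\beta_{1}))}$ holds, not to prevent coordinate growth. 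Finally, the ``$\eta$-witnesses'' you invoke belong to the good models of \partref{2} only; here their role is played by the fact that $I_{l}^{0}\subseteq j^{''}\kappa^{+l+1}$, so each $i_{a}(l)_{0}$ is already of the form $j(\cdot)$.
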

\begin{proof}
Assume inductively that it is true up to $i$, and let us prove for
$i$. We will divide $X^{*}$ into three sets, and handle each one
separately - since in the end we can just take the union of the computed
indices and we will still have $\leq\kappa$ indices. We will want
to divide $X^{*}$ into sets according to the first coordinate in
which $i,i_{a}$ differ (for $a\in X^{*})$. So for $j=0,1,2$ Define
$X_{j}^{*}=\{a\in X^{*}|\forall j^{'}>j:i(j^{'})=i_{a}(j^{'})\bigwedge i_{a}(j)\neq i(j)\}$.
Notice that their union is all of $X^{*}$. First, we will look at
$X_{2}^{*}$ and prove that it can be covered by at most $\kappa$
elements of $S^{'}$ indexed before $i$. Define $\alpha_{2}^{*}=\cup X_{2}^{*}$,
and we will prove this by induction on $\alpha_{2}^{*}$. If $\alpha_{2}^{*}\in X_{2}^{*}$,
then we can throw it and prove without it, and then just add $i_{\alpha_{2}^{*}}$.
So we can assume $\alpha_{2}^{*}\notin X_{2}^{*}$, and by the induction
we can assume that $\alpha_{2}^{*}$ is a limit ordinal. If $cof(\alpha_{2}^{*})\leq\kappa$,
then take a club in $\alpha_{2}^{*}$ that witnesses this property.
Use the induction for each element of the club intersected with $X_{2}^{*}$.
Then just take the union of all the found indices, and since this
is a union of $\leq\kappa$ sets of size $\leq\kappa$, the union
size is $\leq\kappa$ as well. So assume $cof(\alpha_{2}^{*})\geq\kappa^{+}$.

First, assume $\alpha_{2}^{*}>i^{'}(2)_{0}$ for all $i^{'}\in I$
that are $\lessdot i$ and have $i^{'}(2)\neq i(2)$ (notice that
all the $i_{a}$ for $a\in X_{2}^{*}$ are such $i^{'}$). Then we
have: 
\[
\alpha_{2}^{*}=\cup X_{2}^{*}=\cup_{a\in X_{2}^{*}}a=\cup_{a\in X_{2}^{*}}i_{a}(2)_{0}=\cup_{a\in X_{2}^{*}}j(j^{-1}(i_{a}(2)_{0}))=j(\cup_{a\in X_{2}^{*}}j^{-1}(i_{a}(2)_{0}))
\]
 For the third equality $\leq$ is because $a\leq i_{a}(2)_{0}$ (since
$a\in S_{i_{a}}^{'}\subseteq(i_{a}(2)_{0}+1)$), and $\geq$ is from
our assumption. The fifth equality is from \lemref{union-j-assoc}
(where the $j^{-1}(i_{a}(2)_{0})$'s are of cofinality $\geq\kappa^{+}$
since if they aren't, so is the cofinality of the $i_{a}(2)_{0}$'s,
of which the limit is $\alpha_{2}^{*}$ which is of cofinality $\geq\kappa^{+}$).
From this we get that $M[G\ast g\ast H]\models cof(\alpha_{2}^{*})\geq j(\kappa)^{+}$
which is impossible since we have seen that $M[G\ast g\ast H]\models cof(\alpha_{2}^{*})\leq j(\kappa)$.

So we can assume that there is some such $i^{'}$ with $\alpha_{2}^{*}\leq i^{'}(2)_{0}$.
Since $i^{'}(2)_{0}\in S_{i^{'}}$ (because $i^{'}\in S_{i^{'}}$)
this means that every such $S_{i^{'}}$ contains ordinals which are
$\geq\alpha_{2}^{*}$ (and $<j(\kappa^{+3})$). Denote the minimal
possible ordinal like this which is$\geq\alpha_{2}^{*}$ by $\alpha_{2}^{**}$,
and its model by $A$ (i.e. $\alpha_{2}^{*}\leq\alpha_{2}^{**}\in A=S_{i^{'}}$).
First, assume $\alpha_{2}^{*}<\alpha_{2}^{**}$. Notice that from
\lemref{club} $C_{\alpha_{2}^{**}}\in A$, and there must be some
$\delta<j(\kappa^{+2})$ such that $C_{\alpha_{2}^{**}}(\delta)\geq\alpha_{2}^{*}$.
From \lemref{very-good-inter-usable} there must be some $\beta<j(\kappa^{+2})$
such that $\{\delta\}\cup A\subseteq X_{c(i^{'},1,\beta)}$. But then
$C_{\alpha_{2}^{**}},\delta\in X_{c(i^{'},1,\beta)}$ and so $C_{\alpha_{2}^{**}}(\delta)\in X_{c(i^{'},1,\beta)}$
which contradicts the minimality of $\alpha_{2}^{**}$. 

So we can assume $\alpha_{2}^{*}=\alpha_{2}^{**}$. From \lemref{very-good-inter-usable}
there must be some $\beta<j(\kappa^{+2})$ such that $A\cup(S_{i}\cap j(\kappa^{+2}))\subseteq S_{c(i^{'},1,\beta)}$.
Now let $a\in X_{2}^{*}$. Define $\beta_{0}=\alpha_{2}^{*}$, and
inductively $\beta_{k+1}=min(C_{\beta_{k}}\setminus a)$ as long as
possible - i.e. until we reach $a$ (Notice that it must happen after
a finite amount of steps, since otherwise we have a decreasing sequence
of ordinals with no minimum). From property \enuref{very-good-closed}
of very good models, $\beta_{0}\in S_{i}$, and then inductively from
\lemref{club} all the $\beta_{k}$ are in $S_{i}$ as well. So if
we denote $\delta_{k}<j(\kappa^{+2})$ such that $\beta_{k+1}=C_{\beta_{k}}(\delta_{k})$,
the $\delta_{k}$ must be in $S_{i}$ as well. Now look at $S_{c(i^{'},1,\beta)}$
- $\beta_{0}$ must be in it, and then inductively if $\beta_{k}$
is in it then $\beta_{k+1}=C_{\beta_{k}}(\delta_{k})$ is in as well
(the $\delta_{k}$ are in since $\delta_{k}\in S_{i}\cap j(\kappa^{+2})\subseteq S_{c(i^{'},1,\beta)}$).
So we get that $a\in S_{c(i^{'},1,\beta)}$ for every $a\in X_{2}^{*}$,
which means $X_{2}^{*}\subseteq S_{c(i^{'},1,\beta)}$.

Denote $\tilde{i}=c(i^{'},1,\beta)$ and get $X_{2}^{*}\subseteq S_{\tilde{i}}$.
If we had $X_{2}^{*}\subseteq S_{\tilde{i}}^{'}$ then we would be
done, but that is not necessarily true. Remember that $S_{\tilde{i}}^{'}=S_{\tilde{i}}\cap(\tilde{i}(2)+1)\cap(t_{\tilde{i}(2)}^{-1})^{''}(\tilde{i}(1)+1)\cap(t_{\tilde{i}(2)_{0}}^{-1}\circ t_{\tilde{i}(1)_{0}}^{-1})^{''}(\tilde{i}(0)_{0}+1)$.
We already know $X_{2}^{*}\subseteq S_{\tilde{i}}$ so it is left
to take care of $\tilde{i}(2)+1$, $(t_{\tilde{i}(2)}^{-1})^{''}(\tilde{i}(1)+1)$
and $(t_{\tilde{i}(2)_{0}}^{-1}\circ t_{\tilde{i}(1)_{0}}^{-1})^{''}(\tilde{i}(0)_{0}+1)$.
For $\tilde{i}(2)+1$ remember that $\alpha_{2}^{*}\leq i^{'}(2)=\tilde{i}(2)<\tilde{i}(2)+1$.
Since for every $a\in X_{2}^{*}$ we have $a<\alpha_{2}^{*}$ we get
$a<\tilde{i}(2)+1$ so $X_{2}^{*}\subseteq\tilde{i}(2)+1$. So it
is left to handle $(t_{\tilde{i}(2)}^{-1})^{''}(\tilde{i}(1)+1)$
and $(t_{\tilde{i}(2)_{0}}^{-1}\circ t_{\tilde{i}(1)_{0}}^{-1})^{''}(\tilde{i}(0)_{0}+1)$.
It actually might not be true that $X_{2}^{*}\subseteq(t_{\tilde{i}(2)}^{-1})^{''}(\tilde{i}(1)+1)$,
so we will need to change $\tilde{i}(1)$. Look at $(t_{\tilde{i}(2)})^{''}X_{2}^{*}$
(which is defined for every element of $X_{2}^{*}$ because $X_{2}^{*}\subseteq\tilde{i}(2)+1$).
It is a subset of $j(\kappa^{++})$ of size at most $j(\kappa)$,
so it must be bounded. So find $\beta_{0},\beta_{1}$ for $S_{\tilde{i}}$
as in \lemref{very-good-inter-usable} such that $\beta_{0}$ bounds
$(t_{\tilde{i}(2)})^{''}X_{2}^{*}$, and get $\tilde{\tilde{i}}=c(\tilde{i},1,(\beta_{0},\beta_{1}))$
such that $S_{\tilde{i}}\subseteq S_{\tilde{\tilde{i}}}$. Now if
$a\in X_{2}^{*}$ it is still in $S_{\tilde{\tilde{i}}}$ and in $\tilde{i}(2)+1=\tilde{\tilde{i}}(2)+1$,
and since $t_{\tilde{\tilde{i}}(2)}(a)=t_{\tilde{i}(2)}(a)<\beta_{0}=\tilde{\tilde{i}}(1)<\tilde{\tilde{i}}(1)+1$
we have $a\in(t_{\tilde{\tilde{i}}(2)}^{-1})^{''}(\tilde{\tilde{i}}(1)+1)$
which means $X_{2}^{*}\subseteq(t_{\tilde{\tilde{i}}(2)}^{-1})^{''}(\tilde{\tilde{i}}(1)+1)$.
Similarly we can change the 0 coordinate to get $X_{2}^{*}\subseteq(t_{\tilde{\tilde{i}}(2)_{0}}^{-1}\circ t_{\tilde{\tilde{i}}(1)_{0}}^{-1})^{''}(\tilde{\tilde{i}}(0)_{0}+1)$
Together we get $X_{2}^{*}\subseteq S_{\tilde{\tilde{i}}}\cap(\tilde{\tilde{i}}(2)+1)\cap(t_{\tilde{\tilde{i}}(2)}^{-1})^{''}(\tilde{\tilde{i}}(1)+1)\cap(t_{\tilde{\tilde{i}}(2)_{0}}^{-1}\circ t_{\tilde{\tilde{i}}(1)_{0}}^{-1})^{''}(\tilde{\tilde{i}}(0)_{0}+1)=S_{\tilde{\tilde{i}}}^{'}$.
The last thing to notice is that still $\tilde{\tilde{i}}\lessdot i$
since to get $\tilde{\tilde{i}}$ from $\tilde{i}$ and $\tilde{i}$
from $i^{'}$ we only changed the coordinates $0,1$, and we had $i^{'}\lessdot i$
because of a difference in coordinate $2$.

Now look at $X_{1}^{*}$, and we will prove that it can be covered
by at most $\kappa$ elements of $S^{'}$ indexed before $i$. Define
$\alpha_{1}^{*}=\cup t_{i(2)_{0}}^{''}X_{1}^{*}$, and we will prove
by induction on $\alpha_{1}^{*}$. If $\alpha_{1}^{*}\in t_{i(2)_{0}}^{''}X_{1}^{*}$,
then we can throw it and prove without it, and the just add $i_{\alpha_{1}^{*}}$.
So we can assume $\alpha_{1}^{*}\notin t_{i(2)_{0}}^{''}X_{1}^{*}$,
and by the induction we can assume that $\alpha_{1}^{*}$ is a limit
ordinal. If $cof(\alpha_{1}^{*})\leq\kappa$, then take a club in
$\alpha_{1}^{*}$ that witnesses this property. Use the induction
for each element of the club intersected with $t_{i(2)_{0}}^{''}X_{1}^{*}$.
Then just take the union of all the found indices, and since this
is a union of $\leq\kappa$ sets of size $\leq\kappa$, the union
size is $\leq\kappa$ as well. So assume $cof(\alpha_{1}^{*})\geq\kappa^{+}$. 

First, assume $\alpha_{1}^{*}>i^{'}(1)_{0}$ for all $i^{'}\in I$
that are $\lessdot i$ and have $i^{'}(2)=i(2),i^{'}(1)\neq i(1)$
(notice that all the $i_{a}$ for $a\in X_{1}^{*}$ are such $i^{'}$).
Then we have:
\begin{align*}
\alpha_{1}^{*} & =\cup t_{i(2)_{0}}^{''}X_{1}^{*}=\cup_{a\in X_{1}^{*}}t_{i(2)_{0}}(a)=\cup_{a\in X_{1}^{*}}i_{a}(1)_{0}=\\
 & =\cup_{a\in X_{1}^{*}}j(j^{-1}(i_{a}(1)_{0}))=j(\cup_{a\in X_{1}^{*}}j^{-1}(i_{a}(1)_{0}))
\end{align*}
For the third equality $\leq$ is because $t_{i(2)_{0}}(a)\leq i_{a}(1)_{0}$
(since $a\in S_{i_{a}}^{'}\subseteq(t_{i(2)_{0}}^{-1})^{''}(i_{a}(1)_{0}+1)$),
and $\geq$ is from our assumption. The fifth equality is from \lemref{union-j-assoc}
(where the $j^{-1}(i_{a}(1)_{0})$'s are of cofinality $\geq\kappa^{+}$
since if they aren't, so is the cofinality of the $i_{a}(1)_{0}$'s,
of which the limit is $\alpha_{1}^{*}$ which is of cofinality $\geq\kappa^{+}$).
From this we get that $M[G\ast g\ast H]\models cof(\alpha_{1}^{*})\geq j(\kappa)^{+}$
which is impossible since we have seen that $M[G\ast g\ast H]\models cof(\alpha_{1}^{*})\leq j(\kappa)$.

So we can assume that there is some such $i^{'}$ with $\alpha_{1}^{*}\leq i^{'}(1)_{0}$.
Since $i^{'}(1)_{0}\in S_{i^{'}}$ this means every such $S_{i^{'}}$
contains ordinals which are $\geq\alpha_{1}^{*}$ (and $<j(\kappa^{+2})$).
Denote the minimal possible ordinal like this which is $\geq\alpha_{1}^{*}$
by $\alpha_{1}^{**}$, and its model by $A$ (i.e. $\alpha_{1}^{*}\leq\alpha_{1}^{**}\in A=S_{i^{'}}$).
First, assume $\alpha_{1}^{*}<\alpha_{1}^{**}$. From \lemref{club}
$C_{\alpha_{1}^{**}}\in A$, and there must be some $\delta<j(\kappa^{+})$
such that $C_{\alpha_{1}^{**}}(\delta)\geq\alpha_{1}^{*}$. From \lemref{very-good-inter-usable}
there must be some $\beta<j(\kappa^{+1})$ such that $\{\delta\}\cup A\subseteq X_{c(i^{'},0,\beta)}$.
But then $C_{\alpha_{1}^{**}},\delta\in X_{c(i^{'},0,\beta)}$ and
so $C_{\alpha_{1}^{**}}(\delta)\in X_{c(i^{'},0,\beta)}$ which contradicts
the minimality of $\alpha_{1}^{**}$. 

So we can assume $\alpha_{1}^{*}=\alpha_{1}^{**}$. From \lemref{very-good-inter-usable}
there must be some $\beta$ such that $A\cup(S_{i}\cap j(\kappa^{+}))\subseteq S_{c(i^{'},0,\beta)}$.
Now let $a\in X_{1}^{*}$. Define $\beta_{0}=\alpha_{1}^{*}$, and
inductively $\beta_{k+1}=min(C_{\beta_{k}}\setminus t_{i(2)_{0}}(a))$
as long as possible - i.e. until we reach $t_{i(2)_{0}}(a)$ (Notice
that it must happen after a finite amount of steps, since otherwise
we have a decreasing sequence of ordinals with no minimum). From property
\enuref{very-good-closed} of very good models, $\beta_{0}\in S_{i}$,
and then inductively from \lemref{club} (and since $t_{i(2)_{0}}\in S_{i}$
and so $t_{i(2)_{0}}(a)\in S_{i}$) all the $\beta_{k}$ are in $S_{i}$
as well. So if we denote $\delta_{k}<j(\kappa^{+})$ such that $\beta_{k+1}=C_{\beta_{k}}(\delta_{k})$,
the $\delta_{k}$ must be in $S_{i}$ as well. Now look at $S_{c(i^{'},0,\beta)}$
- we know that $\beta_{0}\in t_{i(2)_{0}}^{''}S_{i^{'}}\subseteq t_{i(2)_{0}}^{''}S_{c(i^{'},0,\beta)}$.
Since we know $i(2)=i^{'}(2)=c(i^{'},0,\beta)(2)$, from \lemref{contain-t-alpha}
$t_{i(2)_{0}}\in S_{c(i^{'},0,\beta)}$ and therefore $\beta_{0}\in S_{c(i^{'},0,\beta)}$.
Now inductively if $\beta_{k}$ is in it then $\beta_{k+1}=C_{\beta_{k}}(\delta_{k})$
is in as well (the $\delta_{k}$ are in since $\delta_{k}\in S_{i}\cap j(\kappa^{+})\subseteq S_{c(i^{'},1,\beta)}$,
and the $C_{\beta_{k}}$ from \lemref{club}). So we get that $t_{i(2)_{0}}(a)\in S_{c(i^{'},0,\beta)}$,
and since $t_{i(2)_{0}}\in S_{c(i^{'},0,\beta)}$ we get $a\in S_{c(i^{'},0,\beta)}$.
This is true for every $a\in X_{1}^{*}$, which means $X_{1}^{*}\subseteq S_{c(i^{'},0,\beta)}$.

Denote $\tilde{i}=c(i^{'},0,\beta)$ and get $X_{1}^{*}\subseteq S_{\tilde{i}}$.
If we have $X_{1}^{*}\subseteq S_{\tilde{i}}^{'}$ then we are done.
Remember that $S_{\tilde{i}}^{'}=S_{\tilde{i}}\cap(\tilde{i}(2)_{0}+1)\cap(t_{\tilde{i}(2)_{0}}^{-1})^{''}(\tilde{i}(1)_{0}+1)\cap(t_{\tilde{i}(2)_{0}}^{-1}\circ t_{\tilde{i}(1)_{0}}^{-1})^{''}(\tilde{i}(0)_{0}+1)$.
Since $X_{1}^{*}\subseteq S_{i}^{'}$ and $i,\tilde{i}$ agree on
coordinate $2$ we get $X_{1}^{*}\subseteq(i(2)_{0}+1)=(\tilde{i}(2)_{0}+1)$.
So it is left to take care of $(t_{\tilde{i}(2)_{0}}^{-1})^{''}(\tilde{i}(1)_{0}+1)$
and $(t_{\tilde{i}(2)_{0}}^{-1}\circ t_{\tilde{i}(1)_{0}}^{-1})^{''}(\tilde{i}(0)_{0}+1)$.
Remember that $\alpha_{1}^{*}\leq i^{'}(1)_{0}=\tilde{i}(1)_{0}<\tilde{i}(1)_{0}+1$.
Since for every $a\in X_{1}^{*}$ we have $t_{i(2)_{0}}(a)<\alpha_{1}^{*}$
we get $t_{i(2)_{0}}(a)<\tilde{i}(1)_{0}+1$ so $t_{i(2)_{0}}^{''}X_{1}^{*}\subseteq(\tilde{i}(1)_{0}+1)$,
which means that $X_{1}^{*}\subseteq(t_{\tilde{i}(2)_{0}}^{-1})^{''}(\tilde{i}(1)_{0}+1)$.
To handle $(t_{\tilde{i}(2)_{0}}^{-1}\circ t_{\tilde{i}(1)_{0}}^{-1})^{''}(\tilde{i}(0)_{0}+1)$
change the 0 coordinate as we have done in the previous case ($X_{2}^{*}$),
to get $\tilde{\tilde{i}}$ with $X_{1}^{*}\subseteq S_{\tilde{\tilde{i}}}\cap(\tilde{\tilde{i}}(2)_{0}+1)\cap(t_{\tilde{\tilde{i}}(2)_{0}}^{-1})^{''}(\tilde{\tilde{i}}(1)_{0}+1)\cap(t_{\tilde{i}(2)_{0}}^{-1}\circ t_{\tilde{i}(1)_{0}}^{-1})^{''}(\tilde{i}(0)_{0}+1)=S_{\tilde{\tilde{i}}}^{'}$.
The last thing to notice is that still $\tilde{\tilde{i}}\lessdot i$
since to get $\tilde{\tilde{i}}$ from $\tilde{i}$ and $\tilde{i}$
from $i^{'}$ we only changed the coordinate $0$, and we had $i^{'}\lessdot i$
because of a difference in coordinate $1$.

Finally look at $X_{0}^{*}$, and again we will prove that it can
be covered by at most $\kappa$ elements of $S^{'}$ indexed before
$i$. Define $\alpha_{0}^{*}=\cup(t_{i(1)_{0}}\circ t_{i(2)_{0}})^{''}X_{0}^{*}$,
and we will prove by induction on $\alpha_{0}^{*}$. If $\alpha_{0}^{*}\in(t_{i(1)_{0}}\circ t_{i(2)_{0}})^{''}X_{1}^{*}$,
then we can throw it and prove without it, and then just add $i_{\alpha_{0}^{*}}$.
So we can assume $\alpha_{0}^{*}\notin(t_{i(1)_{0}}\circ t_{i(2)_{0}})^{''}X_{0}^{*}$,
and by the induction we can assume that $\alpha_{0}^{*}$ is a limit
ordinal. If $cof(\alpha_{0}^{*})\leq\kappa$, then take a club in
$\alpha_{0}^{*}$ that witnesses this property. Use the induction
for each element of the club intersected with $(t_{i(1)_{0}}\circ t_{i(2)_{0}})^{''}X_{0}^{*}$.
Then just take the union of all the found indices, and since this
is a union of $\leq\kappa$ sets of size $\leq\kappa$, the union
size is $\leq\kappa$ as well. So assume $cof(\alpha_{0}^{*})\geq\kappa^{+}$. 

First, assume $\alpha_{0}^{*}>i^{'}(0)_{0}$ for all $i^{'}\in I$
that are $\lessdot i$ and have $i^{'}(2)=i(2),i^{'}(1)=i(1),i^{'}(0)\neq i(0)$
(notice that all the $i_{a}$ for $a\in X_{0}^{*}$ are such $i^{'}$).
Then we have:
\begin{align*}
\alpha_{0}^{*} & =\cup(t_{i(1)_{0}}\circ t_{i(2)_{0}})^{''}X_{0}^{*}=\cup_{a\in X_{0}^{*}}(t_{i(1)_{0}}\circ t_{i(2)_{0}})(a)=\cup_{a\in X_{0}^{*}}i_{a}(0)_{0}=\\
 & =\cup_{a\in X_{0}^{*}}j(j^{-1}(i_{a}(0)_{0}))=j(\cup_{a\in X_{0}^{*}}j^{-1}(i_{a}(0)_{0}))
\end{align*}
For the third equality $\leq$ is because $(t_{i(1)_{0}}\circ t_{i(2)_{0}})(a)\leq i_{a}(0)_{0}$
(since $a\in S_{i_{a}}^{'}\subseteq(t_{i(2)_{0}}^{-1}\circ t_{i(1)_{0}}^{-1})^{''}(i_{a}(0)_{0}+1)$),
and $\geq$ is from our assumption. The fifth equality is from \lemref{union-j-assoc}
(where the $j^{-1}(i_{a}(0)_{0})$'s are of cofinality $\geq\kappa^{+}$
since if they aren't, so is the cofinality of the $i_{a}(0)_{0}$'s,
of which the limit is $\alpha_{0}^{*}$ which is of cofinality $\geq\kappa^{+}$).
From this we get that $M[G\ast g\ast H]\models cof(\alpha_{0}^{*})\geq j(\kappa)^{+}$
which is impossible since we have seen that $M[G\ast g\ast H]\models cof(\alpha_{0}^{*})\leq j(\kappa)$.

So we can assume that there is some such $i^{'}$ with $\alpha_{0}^{*}\leq i^{'}(0)_{0}$.
Since $i^{'}(0)_{0}\in S_{i^{'}}$ this means that every such $S_{i^{'}}$
contains ordinals which are $\geq\alpha_{0}^{*}$. Denote the minimal
possible ordinal like this which is $\geq\alpha_{0}^{*}$ by $\alpha_{0}^{**}$,
and its model by $A$ (i.e. $\alpha_{0}^{*}\leq\alpha_{0}^{**}\in A=S_{i^{'}}$).
First, assume $\alpha_{0}^{*}<\alpha_{0}^{**}$. From \lemref{club}
$C_{\alpha_{0}^{**}}\in A$, and there must be some $\delta<j(\kappa)$
such that $C_{\alpha_{0}^{**}}(\delta)\geq\alpha_{0}^{*}$. But from
property \ref{enu:very-good-j_k} of very good models $\delta\in A$,
which means $C_{\alpha_{0}^{**}}(\delta)\in A$ which contradicts
the minimality of $\alpha_{0}^{**}$. 

So we can assume $\alpha_{0}^{*}=\alpha_{0}^{**}$. Now let $a\in X_{0}^{*}$.
Define $\beta_{0}=\alpha_{0}^{*}$, and inductively $\beta_{k+1}=min(C_{\beta_{k}}\setminus(t_{i(1)_{0}}\circ t_{i(2)_{0}})(a))$
as long as possible - i.e. until we reach $(t_{i(1)_{0}}\circ t_{i(2)_{0}})(a)$
(Notice that it must happen after a finite amount of steps, since
otherwise we have a decreasing sequence of ordinals with no minimum).
From property \enuref{very-good-closed} of very good models, $\beta_{0}\in S_{i}$,
and then inductively from \lemref{club} (and since $t_{i(1)_{0}}\circ t_{i(2)_{0}}\in S_{i}$
and so $(t_{i(1)_{0}}\circ t_{i(2)_{0}})(a)\in S_{i}$) all the $\beta_{k}$
are in $S_{i}$ as well. So if we denote $\delta_{k}<j(\kappa)$ such
that $\beta_{k+1}=C_{\beta_{k}}(\delta_{k})$, the $\delta_{k}$ must
be in $S_{i}$ as well. We know that $\beta_{0}\in(t_{i(1)_{0}}\circ t_{i(2)_{0}})^{''}S_{i^{'}}$.
Since we know $i(2)=i^{'}(2)$ and $i(1)=i^{'}(1)$, from \lemref{contain-t-alpha}
$t_{i(1)_{0}}\circ t_{i(2)_{0}}\in S_{i^{'}}$ and therefore $\beta_{0}\in S_{i^{'}}$.
Now inductively if $\beta_{k}$ is in $S_{i^{'}}$ then $\beta_{k+1}=C_{\beta_{k}}(\delta_{k})$
is in as well (the $\delta_{k}$ are in from property \ref{enu:very-good-j_k}
of very good models, and the $C_{\beta_{k}}$ from \lemref{club}).
So we get that $(t_{i(1)_{0}}\circ t_{i(2)_{0}})(a)\in S_{i^{'}}$,
and since $t_{i(1)_{0}}\circ t_{i(2)_{0}}\in S_{i^{'}}$ we get $a\in S_{i^{'}}$.
This is true for every $a\in X_{0}^{*}$, so $X_{0}^{*}\subseteq S_{i^{'}}$.

If we have $X_{0}^{*}\subseteq S_{i^{'}}^{'}$ then we are done. Remember
that $S_{i^{'}}^{'}=S_{i^{'}}\cap(i^{'}(2)_{0}+1)\cap(t_{i^{'}(2)_{0}}^{-1})^{''}(i^{'}(1)_{0}+1)$.
Since $X_{0}^{*}\subseteq S_{i}^{'}$ and $i,i^{'}$ agree on coordinates
$1,2$ we get $X_{0}^{*}\subseteq(i(2)_{0}+1)\cap(t_{i(2)_{0}}^{-1})^{''}(i(1)_{0}+1)=(i^{'}(2)_{0}+1)\cap(t_{i^{'}(2)_{0}}^{-1})^{''}(i^{'}(1)_{0}+1)$.
Together we get $X_{0}^{*}\subseteq S_{i^{'}}\cap(i^{'}(2)_{0}+1)\cap(t_{i^{'}(2)_{0}}^{-1})^{''}(i^{'}(1)_{0}+1)=S_{i^{'}}^{'}$.
\end{proof}
From here the proof continues similar to the $\kappa^{+2}$ case,
using $S^{'}$ instead of $S$ (complete explanation is in the next
part).

\newpage{}

\section{\label{part:4}$Add(\kappa,\gamma)$}

\subsection{Building Great Models}

Let us see now that the result holds for $Add(\kappa,\gamma)$ for
every cardinal $\gamma$, $\kappa<\gamma<\kappa^{+\kappa}$. For that
we will again redefine $S,I$.
\begin{defn}
For a $M[G\ast g\ast H]$-cardinal $\alpha\geq j(\kappa^{+})$ define
$I_{\alpha}=I_{\alpha}^{0}\times I_{\alpha}^{1}$, where $I_{\alpha}^{1}=(\alpha^{j(\kappa)})^{M[G\ast g\ast H]}$,
$I_{\alpha}^{0}=(\alpha\cap j^{''}Ord)\setminus j(\kappa^{+})$ for
$\alpha>j(\kappa^{+})$ (e.g. $I_{j(\kappa^{++})}^{0}=j^{''}\kappa^{++}\setminus j(\kappa^{+})$),
and $I_{j(\kappa^{+})}^{0}=j^{''}\kappa^{+}$. Note that since GCH
is assumed, $(\alpha^{j(\kappa)})^{M[G\ast g\ast H]}$ is either $\alpha$
or $(\alpha^{+})^{M[G\ast g\ast H]}$. Order $I_{\alpha}$ by the
lexicographical order and denote it by $\lessdot_{\alpha}$. 
\end{defn}
\begin{lem}
$I_{\alpha}$ is well ordered.
\end{lem}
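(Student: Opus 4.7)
The plan is to reduce the well-orderedness of $I_{\alpha}$ to the standard fact that the lexicographic product of two well-orders is a well-order. The only real content is to check that each factor $I_{\alpha}^{0}, I_{\alpha}^{1}$ is itself well-ordered in the ambient model.

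First I would observe that $I_{\alpha}^{1} = (\alpha^{j(\kappa)})^{M[G\ast g\ast H]}$ is either $\alpha$ or $(\alpha^{+})^{M[G\ast g\ast H]}$ (this is stated right before the lemma, using GCH), hence in either case it is an ordinal, and so trivially well-ordered by $\in$. Second, I would note that $I_{\alpha}^{0}$, in both its definitions, is a set of ordinals: for $\alpha > j(\kappa^{+})$ we have $I_{\alpha}^{0} = (\alpha \cap j''\mathrm{Ord}) \setminus j(\kappa^{+})$, and elements of $j''\mathrm{Ord}$ are ordinals because $j$ sends ordinals to ordinals; for $\alpha = j(\kappa^{+})$ we have $I_{\alpha}^{0} = j''\kappa^{+}$, again a set of ordinals. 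Hence $I_{\alpha}^{0}$ is well-ordered by the usual $\in$-order inherited from $\mathrm{Ord}$.

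Then I would verify the lex product is a well-order in the standard way. Given a nonempty $A \subseteq I_{\alpha}$, consider its projection $\pi_{0}(A) \subseteq I_{\alpha}^{0}$; since $I_{\alpha}^{0}$ is well-ordered, this projection has a least element $a_{0}$. Now consider the nonempty slice $\{b \in I_{\alpha}^{1} : (a_{0},b) \in A\} \subseteq I_{\alpha}^{1}$; since $I_{\alpha}^{1}$ is well-ordered, it has a least element $b_{0}$. The pair $(a_{0},b_{0})$ is then the $\lessdot_{\alpha}$-minimum of $A$, directly from the definition of lexicographic order.

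There is essentially no obstacle here: the lemma is really just a bookkeeping check that the two definitions of $I_{\alpha}^{0}$ do land in $\mathrm{Ord}$, together with a routine product-of-well-orders argument. The one spot worth a moment's care is noticing that for $\alpha = j(\kappa^{+})$ the definition does \emph{not} subtract $j(\kappa^{+})$, so one must handle that case separately when identifying $I_{\alpha}^{0}$ as a set of ordinals; but this poses no real difficulty.
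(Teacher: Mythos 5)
Your proof is correct and is essentially the paper's argument: both reduce the claim to the standard fact that a lexicographic product of two well-orders is a well-order, the paper phrasing it via the nonexistence of infinite descending sequences and you via least elements of nonempty subsets. Your extra check that both versions of $I_{\alpha}^{0}$ (and $I_{\alpha}^{1}$) are sets of ordinals is a harmless bit of bookkeeping the paper leaves implicit.
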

\begin{proof}
This is a basic property of lexicographical order of a finite Cartesian
product - assume there is a decreasing sequence. Look at the most
significant part. It must be decreasing as well, so it must have a
minimum, and we can look at the sequence from the first time we get
to that minimum. Now simply repeat the argument for the second coordinate.
\end{proof}
\begin{defn}
Define $I\subseteq\prod_{\alpha\in[j(\kappa^{+}),j(\gamma)]\cap Card}I_{\alpha}$,
which consists of all the functions which have a finite support (where
a coordinate $\alpha$ is ``zero'' if it has the minimal value possible
in $I_{\alpha}$). Order it co-lexicographically with $\lessdot$
(where $j(\gamma)$ is the most significant and $j(\kappa^{+})$ is
the least, i.e. $i^{'}\lessdot i$ iff there is $\alpha\in[j(\kappa^{+}),j(\gamma)]\cap Card$
such that $i^{'}(\alpha)\lessdot_{\alpha}i(\alpha)$ and for all $\alpha^{'}\in(\alpha,j(\gamma)]\cap Card$
we have $i^{'}(\alpha)=i(\alpha)$).
\end{defn}
\begin{lem}
\label{lem:I-well-ordered}$I$ is well ordered.
\end{lem}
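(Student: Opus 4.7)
My plan is to prove well-foundedness of $\lessdot$ on $I$ (equivalently, well-ordering, since $\lessdot$ is linear) by induction on $\alpha \in [j(\kappa^{+}), j(\gamma)] \cap \mathrm{Card}$, showing at each stage that the set $I^{\leq \alpha}$ of those $i \in I$ whose support is contained in $\mathrm{Card} \cap [j(\kappa^{+}), \alpha]$ is well-ordered under the restriction of $\lessdot$. Applied at $\alpha = j(\gamma)$ this will give $I$ itself.

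The base case $\alpha = j(\kappa^{+})$ is immediate, since $I^{\leq \alpha}$ is order-isomorphic to $I_{j(\kappa^{+})}$, which is well-ordered by the previous lemma. In the successor case $\alpha = \beta^{+}$, I would decompose each $i \in I^{\leq \alpha}$ as the pair $(i(\alpha), i')$ with $i' \in I^{\leq \beta}$ the restriction to cardinals $\leq \beta$; the co-lexicographic $\lessdot$ on $I^{\leq \alpha}$ then corresponds exactly to the lexicographic order on $I_\alpha \times I^{\leq \beta}$ with $I_\alpha$ dominant, which is a well-order as a lex product of two well-orders (using the inductive hypothesis on $I^{\leq \beta}$).

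The crux is the limit case, where $\alpha$ is a limit cardinal. First I would show that $I^{<\alpha} := \bigcup_{\beta < \alpha,\,\beta \in \mathrm{Card}} I^{\leq \beta}$ is well-ordered, by verifying that each $I^{\leq \beta}$ sits as an \emph{initial segment} of $I^{\leq \beta'}$ whenever $\beta \leq \beta' < \alpha$: if $i \in I^{\leq \beta'}$ satisfies $i \lessdot j$ for some $j \in I^{\leq \beta}$, and $i$ had any support coordinate $\gamma \in (\beta, \beta']$, then at the maximal such $\gamma$ in $\mathrm{supp}(i)$ the functions agree on all larger coordinates (both equal the minimum), while $j(\gamma)$ is minimal and $i(\gamma)$ is not, forcing $j \lessdot i$---contradicting $i \lessdot j$. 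Hence $I^{<\alpha}$ is an increasing union of well-ordered initial segments, and so itself well-ordered. Then $I^{\leq \alpha}$ splits as $I^{<\alpha}$ (those $i$ with $i(\alpha) = \min$) together with $(I_\alpha \setminus \{\min\}) \times I^{<\alpha}$ (those $i$ with $i(\alpha) \neq \min$), the second factor carrying the lex order with $I_\alpha$ dominant; every element of $I^{<\alpha}$ is $\lessdot$ every element of the second part, exhibiting $I^{\leq \alpha}$ as a well-ordered sum.

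The main obstacle is exactly the initial-segment verification in the limit case, and this is where the finite-support hypothesis on $I$ is used essentially: without it, an element with support cofinal in $\alpha$ could be interposed between elements of a fixed $I^{\leq \beta}$, destroying both the initial-segment structure and the reduction of the limit case to an increasing union.
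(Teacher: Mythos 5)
Your proof is correct, but it takes a different route from the paper's. The paper argues directly with a putative descending sequence $\langle a_{\alpha}\mid\alpha<\xi\rangle$: the top support coordinates $b_{\alpha}$ form a non-increasing sequence, so by induction on $b_{0}$ one may assume they are constant; then the values $a_{\alpha}(b_{0})$ stabilize because $I_{b_{0}}$ is well ordered, and zeroing out that coordinate drops the top of the support, allowing the induction hypothesis to finish. There is no successor/limit case split there, since the induction on $b_{0}$ is used only to pass to strictly smaller values. You instead never touch a descending sequence: you build $I$ structurally as $\bigcup_{\alpha}I^{\leq\alpha}$, handling successors via a lexicographic product of two well-orders and limits via your initial-segment observation (which is the one place the finite-support hypothesis does real work, and you correctly isolate it). Your argument is somewhat longer but gives more information --- it exhibits $I$ as an iterated well-ordered sum/product and shows each $I^{\leq\beta}$ is an initial segment of $I$, facts the paper's proof does not produce --- while the paper's peeling argument is shorter and closer to the standard proof that a finite-support reverse-lexicographic product of well-orders is well ordered. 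Both rest on the same two ingredients: well-orderedness of each $I_{\alpha}$ and finiteness of supports.
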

\begin{proof}
Let $\emptyset\neq\langle a_{\alpha}|\alpha<\xi\rangle=A\subseteq I$
be a decreasing sequence. Denote by $b_{\alpha}$ the highest non
zero coordinate of $a_{\alpha}$ (which exists since every element
of $I$ has a finite support). We will proceed by induction on $b_{0}$.
Notice that $\langle b_{\alpha}|\alpha<\xi\rangle$ must be a decreasing
sequence as well. So if there is some $\alpha^{'}$ such that $b_{\alpha^{'}}<b_{0}$,
just us the induction for $\langle a_{\alpha}|\alpha^{'}\leq\alpha<\xi\rangle$.
So we can assume that for every $\alpha<\xi$ we have $b_{0}=b_{\alpha}$.
Now look at $\langle a_{\alpha}(b_{0})|\alpha<\xi\rangle$, which
must be a decreasing sequence as well. Since $I_{b_{0}}$ is well
ordered, this sequence must have a minimal value, and there is some
$\alpha^{'}$ such that for every $\alpha\in[\alpha^{'},\xi)$ the
value of $a_{\alpha}(b_{0})$ is fixed. We can throw all the elements
below $\alpha^{'}$, so assume $a_{\alpha}(b_{0})$ is fixed for every
$\alpha<\xi$. So we can now create a new sequence $\langle a_{\alpha}^{'}|\alpha<\xi\rangle$
where $a_{\alpha}^{'}$ is $a_{\alpha}$ except the value at $b_{0}$
is zeroed. So we can use the induction on this sequence, which means
that from some element this sequence has a fixed value, which means
that so did $A$.
\end{proof}
\begin{defn}
For $i\in I$ define $c(i,\alpha,\beta)$ to be $i^{'}\in I$ such
that for every index $\neq\alpha$ it is exactly like $i$, and for
$\alpha$: $i^{'}(\alpha)=\beta$. 
\end{defn}
\begin{defn}
Let $i\in I$. We will define inductively - $\gamma_{0}^{i}=j(\gamma)$.
For every $l<\omega$, assume $\gamma_{l}^{i}$ is defined, and define
$\gamma_{l+1}^{i}=|i(\gamma_{l}^{i})_{0}|^{M[G\ast g\ast H]}$ as
long as it is possible. Notice that it is a decreasing sequence of
ordinals and therefore must stop after a finite number of steps, and
it will stop when $\gamma_{l}^{i}=j(\kappa^{+})$. Denote by $l_{i}$
the length of the sequence.
\end{defn}
\begin{example}
Starting from $\gamma=\kappa^{+\omega}$ and some $i\in I$, we have
$\gamma_{0}^{i}=j(\kappa^{+\omega})$. We could have $i(j(\kappa^{+\omega}))_{0}=j(\kappa^{+10})+\omega$
which means $\gamma_{1}^{i}=j(\kappa^{+10})$. The we might have $i(j(\kappa^{+10}))_{0}=j(\kappa^{+})+2$
which means $\gamma_{2}^{i}=j(\kappa^{+})$ and $l_{i}=3$. 
\end{example}
\begin{lem}
\label{lem:cardinal-form}Every $M[G\ast g\ast H]$-cardinal $\alpha$,
$j(\kappa^{+})\leq\alpha\leq j(\gamma)$ is of the form $\alpha=j(\kappa^{+\eta})$
for some $0<\eta<\kappa$.
\end{lem}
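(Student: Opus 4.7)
The plan is to prove this via two steps: first identify the $M$-cardinals in the target interval using elementarity of $j$, and then use the chain condition of the forcing producing $M[G\ast g\ast H]$ over $M$ to transfer the conclusion to $M[G\ast g\ast H]$.

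First, since $\gamma$ is a cardinal with $\kappa<\gamma<\kappa^{+\kappa}$ and $\kappa^{+\kappa}$ is by definition the supremum of $\{\kappa^{+\xi}:\xi<\kappa\}$, I would write $\gamma=\kappa^{+\eta_0}$ for a unique $0<\eta_0<\kappa$. Applying $j$ and using $\eta_0<\kappa=crit(j)$ so that $j(\eta_0)=\eta_0$, I get $j(\gamma)=j(\kappa^{+\eta_0})=j(\kappa)^{+\eta_0}$ in $M$. By elementarity the $M$-cardinals in the interval $[j(\kappa^{+}),j(\gamma)]=[j(\kappa)^{+},j(\kappa)^{+\eta_0}]$ are exactly $\{j(\kappa)^{+\eta}:1\leq\eta\leq\eta_0\}$; and since each such $\eta$ is below $\kappa$ we again have $j(\eta)=\eta$, hence $j(\kappa)^{+\eta}=j(\kappa^{+\eta})$. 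So the set of $M$-cardinals in the target interval is precisely $\{j(\kappa^{+\eta}):0<\eta\leq\eta_0\}$, which is already of the form promised by the lemma.

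Second, I would pass from ``$M$-cardinal'' to ``$M[G\ast g\ast H]$-cardinal'' by observing that $M[G\ast g\ast H]$ is a generic extension of $M$ by $\mathbb{P}^{M}_{j(\kappa)}$, the Easton iteration inside $M$ up to (but not including) stage $j(\kappa)$: the earlier lemma gives $\mathbb{P}_{\kappa+1}=j(\mathbb{P})_{\kappa+1}$, so $G\ast g$ is $M$-generic for the first $\kappa+1$ stages, and $H$ was constructed to be $M[G\ast g]$-generic for the tail up to $j(\kappa)$. By the standard Easton argument, $\mathbb{P}^{M}_{j(\kappa)}$ has cardinality $j(\kappa)$ in $M$ and satisfies the $j(\kappa)$-c.c.\ in $M$, using that $j(\kappa)$ is inaccessible in $M$ and that each stage below $j(\kappa)$ has underlying set of size below $j(\kappa)$. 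Consequently every $M$-cardinal $\geq j(\kappa^{+})$ is preserved in $M[G\ast g\ast H]$; and since $M[G\ast g\ast H]$-cardinals are always a subset of $M$-cardinals, combining with the first step yields the claim.

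The only real (and minor) obstacle is the chain-condition bookkeeping for $\mathbb{P}^{M}_{j(\kappa)}$ and the verification that $G\ast g\ast H$ is genuinely $M$-generic for this forcing; both are routine once the earlier agreement lemmas on $j(\mathbb{P})_{\kappa+1}$ and the construction of $H$ are in hand. The elementarity step that reduces the lemma to the structure of $M$-cardinals between $j(\kappa^{+})$ and $j(\kappa)^{+\eta_0}$ is essentially formal once $\gamma=\kappa^{+\eta_0}$ has been fixed.
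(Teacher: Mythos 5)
Your first step is correct and is essentially the paper's argument: the paper notes that $B=[\kappa^{+},\gamma]\cap Card$ has size $<\kappa$, hence $j(B)=j''B$, so every $M$-cardinal in $[j(\kappa^{+}),j(\gamma)]$ is $j(\kappa^{+\eta})$ for some $\eta<\kappa$; your route via $j(\eta)=\eta$ and $j(\kappa^{+\eta})=(j(\kappa)^{+\eta})^{M}$ is the same idea in different clothing. Your second step, however, is not needed: the lemma only asserts that every $M[G\ast g\ast H]$-cardinal in the interval has the stated form, and for that the only fact required is the trivial downward absoluteness you mention in passing (any $M[G\ast g\ast H]$-cardinal is an $M$-cardinal, since a collapsing bijection in $M$ would survive into the extension). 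The $j(\kappa)$-c.c.\ bookkeeping and the preservation of $M$-cardinals in $M[G\ast g\ast H]$ would only be relevant for the converse inclusion, which the lemma does not claim and the paper does not prove here; so while that material is correct (and the paper does assert the $j(\kappa)$-c.c.\ elsewhere), you can delete it without loss.
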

\begin{proof}
Look at $B=[\kappa^{+},\gamma]\cap Card$. Since $\gamma<\kappa^{+\kappa}$
we have $|B|<\kappa$. Therefore $j(B)=j^{''}B$. This means that
for $\alpha$ as above there is some $\beta\in B$ such that $j(\beta)=\alpha$.
Also since $\beta\in B=[\kappa^{+},\gamma]\cap Card\subseteq[\kappa^{+},\kappa^{+\kappa})\cap Card$
there is some $0<\eta<\kappa$ such that $\beta=\kappa^{+\eta}$,
so we get $j(\kappa^{+\eta})=\alpha$ as we wanted. 
\end{proof}
\begin{lem}
\label{lem:unbounded-j}Let $\alpha$ be a $M[G\ast g\ast H]$-cardinal
such that $j(\kappa^{+})\leq\alpha\leq j(\gamma)$. Then for every
$A\subseteq\alpha$ with $|A|=j(\kappa)$ there is a subset $B\subseteq\alpha$
with $|B|<\kappa$ such that $B\subset j^{''}Ord$ and $B$ is unbounded
in $A\cup B$.
\end{lem}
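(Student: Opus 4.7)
By the preceding lemma I may write $\alpha=j(\kappa^{+\eta})$ for some $0<\eta<\kappa$, and my plan is to argue case by case on whether $A$ is bounded in $\alpha$.

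As a preparatory step I would verify that $j^{''}\kappa^{+\eta}$ is cofinal in $\alpha$. When $\eta$ is a successor, $\kappa^{+\eta}$ is a regular cardinal greater than $\kappa$, so \lemref{1} applies directly. When $\eta$ is a limit, set $\mu:=cof(\eta)\leq\eta<\kappa$ and fix a cofinal sequence $\langle\eta_{\xi}:\xi<\mu\rangle$ in $\eta$; since $j$ fixes both the length $\mu$ and each $\eta_{\xi}$, the sequence $\langle j(\kappa^{+\eta_{\xi}}):\xi<\mu\rangle$ lies in $j^{''}Ord$ and is cofinal in $\alpha=j(\kappa^{+\eta})$ by elementarity.

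With this in hand, set $\beta:=\sup A$. In the case $\beta<\alpha$, which I expect to be the main case of interest, I pick a single $\xi<\kappa^{+\eta}$ with $j(\xi)\geq\beta$ (available from the preparation) and take $B:=\{j(\xi)\}$; then $B\subseteq j^{''}Ord\cap\alpha$, $|B|=1<\kappa$, and every $a\in A$ satisfies $a\leq\beta\leq j(\xi)$, so $B$ is unbounded in $A\cup B$.

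For $\beta=\alpha$: if $\eta$ is a limit, the sequence $B:=\{j(\kappa^{+\eta_{\xi}}):\xi<\mu\}$ from the preparation already works, having cardinality $\mu<\kappa$, sitting inside $j^{''}Ord$, and being cofinal in $\alpha$. If $\eta$ is a successor, then $\alpha$ is regular in $M$, and remains regular in $M[G\ast g\ast H]$ because the forcing over $M$ has $M$-size $\leq j(\kappa)<\alpha$, so a cofinal $A$ must live in $V[G\ast g]\setminus M[G\ast g\ast H]$. I would treat this subcase by induction on $\eta$: writing $\alpha=\beta^{+M}$ for $\beta=j(\kappa^{+(\eta-1)})$, applying the inductive hypothesis to the trace of $A$ at $\beta$, and lifting the resulting short $j^{''}$-cofinal sequence through the successor step using the extender representation $j=j_{E}$. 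This last configuration is the main obstacle: the $M$-regularity of $\alpha$ rules out a direct appeal to \lemref{1}, so one has to exploit the finer $V[G\ast g]$-cofinality of $\alpha$ together with the $(\kappa,\kappa^{+\gamma})$-extender structure of $j$ to manufacture the required short $j^{''}$-cofinal subset.
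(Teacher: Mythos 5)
Most of your argument tracks the paper's: the paper splits on the $M[G\ast g\ast H]$-cofinality of $\alpha$, takes a singleton of $j''Ord\cap\alpha$ above $\sup A$ when that cofinality exceeds $j(\kappa)$, and pushes a short ($<\kappa$) increasing cofinal sequence through $j$ when it is $<j(\kappa)$ --- which, by \lemref{cardinal-form} and elementarity, is exactly your limit-$\eta$ case. Your bounded case and your limit-$\eta$ case are therefore fine and essentially identical to the paper's.

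The genuine gap is your last subcase ($\eta$ a successor and $A$ cofinal in $\alpha$), which you leave as a sketch. No induction on $\eta$ or appeal to the extender representation can close it, because in that configuration the conclusion of the lemma is outright false: if $\eta$ is a successor then $j''\kappa^{+\eta}$ is cofinal in $\alpha$ with order type the regular cardinal $\kappa^{+\eta}$, so $cof^{V[G\ast g]}(\alpha)=\kappa^{+\eta}>\kappa$, and hence no $B$ of size $<\kappa$ can be unbounded in $A\cup B$ once $\sup A=\alpha$. The resolution is that this subcase is vacuous under the intended reading of the hypothesis ``$|A|=j(\kappa)$'': the cardinality is computed in $M[G\ast g\ast H]$ (the lemma is only ever applied to sets covered by elements of $M[G\ast g\ast H]$ of size $j(\kappa)$ there, such as $T''S_{i}$). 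Since $\eta$ a successor gives $M\models\text{``}\alpha\text{ is regular''}$ and the forcing producing $G\ast g\ast H$ is $j(\kappa)^{+}$-c.c.\ over $M$, one gets $M[G\ast g\ast H]\models cof(\alpha)>j(\kappa)$, so such an $A$ is automatically bounded and you fall back into your first case. (Incidentally, your justification that $\alpha$ stays regular --- that the forcing has $M$-size $\leq j(\kappa)$ --- is not right, since the top factor $Add(j(\kappa),j(\gamma))$ alone has $M$-size $j(\gamma)>j(\kappa)$; the chain condition is what does the work.) Replace the induction sketch with this observation and your proof becomes complete and coincides with the paper's.
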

\begin{proof}
First assume $M[G\ast g\ast H]\models cof(\alpha)>j(\kappa)$. So
we have $cof(j^{-1}(\alpha))>\kappa$ and from \lemref{1} we get
that $j^{''}Ord$ is unbounded inside $\alpha$. On the other hand,
since $|A|=j(\kappa)$ it is bounded in $\alpha$ - so we can just
choose the singleton of some element of $j^{''}Ord\cap\alpha$ above
$A$ as $B$ and we are done. So assume $M[G\ast g\ast H]\models cof(\alpha)\leq j(\kappa)$.
Notice that since $\alpha$ is of the form $j(\kappa^{+\beta})$ for
$1\leq\beta<\kappa$ this means that $M[G\ast g\ast H]\models cof(\alpha)<j(\kappa)$.
Using elementarity we get $cof(j^{-1}(\alpha))<\kappa$, so if we
take some increasing sequence witnessing it, and then apply $j$,
we will get the wanted $B$.
\end{proof}
\begin{defn}
For $i\in I$ and $l<l_{i}$ define $T_{l}^{i}=t_{i(\gamma_{l-1}^{i})_{0}}\circ\ldots\circ t_{i(\gamma_{0}^{i})_{0}}$
(where when $l=0$ we have $T_{0}^{i}=Id_{j(\gamma)}$).
\end{defn}
\begin{lem}
\label{lem:T-Im}$Rng(T_{l}^{i})=\gamma_{l}^{i}$.
\end{lem}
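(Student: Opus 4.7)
My plan is to prove $Rng(T_l^i) = \gamma_l^i$ by induction on $l < l_i$. The base case $l=0$ is immediate from the definition $T_0^i = Id_{j(\gamma)}$, which gives $Rng(T_0^i) = j(\gamma) = \gamma_0^i$.

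For the inductive step, suppose $Rng(T_l^i) = \gamma_l^i$. By definition
\[
T_{l+1}^i = t_{i(\gamma_l^i)_0} \circ T_l^i,
\]
so its range is $t_{i(\gamma_l^i)_0}\,''\bigl(Rng(T_l^i) \cap dom(t_{i(\gamma_l^i)_0})\bigr)$. The function $t_{i(\gamma_l^i)_0}$ is the minimal isomorphism from $i(\gamma_l^i)_0 + 1$ onto $|i(\gamma_l^i)_0|^{M[G*g*H]}$, so its domain is $i(\gamma_l^i)_0 + 1$ and its range (as a function) is exactly $\gamma_{l+1}^i$ by the recursive definition of $\gamma_{l+1}^i$.

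The small bookkeeping point I have to verify is that $i(\gamma_l^i)_0 + 1 \leq \gamma_l^i$, so that the induction hypothesis $Rng(T_l^i) = \gamma_l^i$ guarantees the entire domain $i(\gamma_l^i)_0 + 1$ of $t_{i(\gamma_l^i)_0}$ lies in $Rng(T_l^i)$. This follows from the definition of $I_{\gamma_l^i}^0$: for $\gamma_l^i > j(\kappa^+)$ we have $I_{\gamma_l^i}^0 \subseteq \gamma_l^i \cap j''Ord \subseteq \gamma_l^i$, and for $\gamma_l^i = j(\kappa^+)$ we have $I_{j(\kappa^+)}^0 = j''\kappa^+ \subseteq j(\kappa^+)$; in either case $i(\gamma_l^i)_0 < \gamma_l^i$, hence $i(\gamma_l^i)_0 + 1 \leq \gamma_l^i$. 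Combining this with the inductive hypothesis:
\[
Rng(T_{l+1}^i) = t_{i(\gamma_l^i)_0}\,''\bigl(\gamma_l^i \cap (i(\gamma_l^i)_0+1)\bigr) = t_{i(\gamma_l^i)_0}\,''(i(\gamma_l^i)_0+1) = |i(\gamma_l^i)_0|^{M[G*g*H]} = \gamma_{l+1}^i,
\]
closing the induction.

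The proof is essentially bookkeeping; the only real thing to check is the containment $i(\gamma_l^i)_0 + 1 \leq \gamma_l^i$, which is the main (very mild) obstacle and is handled by unpacking the definition of $I_{\gamma_l^i}^0$ as above. No deeper machinery from the preceding sections is needed.
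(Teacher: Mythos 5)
Your proof is correct and takes essentially the same route as the paper, whose entire argument is the chain $Rng(T_{l}^{i})=Rng(t_{i(\gamma_{l-1}^{i})_{0}})=|i(\gamma_{l-1}^{i})_{0}|^{M[G\ast g\ast H]}=\gamma_{l}^{i}$. The induction and the check that $i(\gamma_{l}^{i})_{0}+1\subseteq\gamma_{l}^{i}$ (so that the inner composition covers the domain of the outermost $t$) is precisely the detail the paper leaves implicit in its first equality, so your version is if anything more careful.
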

\begin{proof}
The last function in the chain is $t_{i(\gamma_{l-1}^{i})_{0}}$,
and from its definition:
\[
Rng(T_{l}^{i})=Rng(t_{i(\gamma_{l-1}^{i})_{0}})=|i(\gamma_{l-1}^{i})_{0}|^{M[G\ast g\ast H]}=\gamma_{l}^{i}
\]
\end{proof}
\begin{rem}
Note that for every $i\in I$ since $\gamma_{l_{i}-1}^{i}=j(\kappa^{+})$
we get $Rng(T_{l_{i}-1})=j(\kappa^{+})$.
\end{rem}
\begin{defn}
$X_{i}\in H_{\theta}^{M[G\ast g\ast H]}$ will be called a great model
for $i\in I$ iff:
\begin{enumerate}
\item $M[G\ast g\ast H]\models X_{i}\preceq\langle H_{\theta}^{M[G\ast g\ast H]},\vartriangleleft\rangle$.
\item $M[G\ast g\ast H]\models|X_{i}|=j(\kappa)$.
\item \label{enu:great-contains-j_k}$j(\kappa)\subseteq X_{i}$.
\item \label{enu:great-gamma}$\forall l<l_{i}:\gamma_{l}^{i}\in X_{i}$.
\item \label{enu:great-i-gamma}$\forall l<l_{i}:i(\gamma_{l}^{i})_{0}\in X_{i}$.
\item \label{enu:great-contains-p}For every $\alpha$ such that $i(\alpha)$
is not zero, if there is $\beta<i(\alpha)_{1}$ such that $i(\alpha)_{1}\in q_{I_{\alpha}^{1}}(\beta)$
in the index $\beta^{'}$ (i.e. $i(\alpha)_{1}=q_{I_{\alpha}^{1}}(\beta)_{\beta^{'}}$),
then $p_{\alpha}(\beta^{'})\subseteq X_{i}$ (where $p,q$ are as
defined in \defref{q} and \defref{p}).
\item \label{enu:great-closed-cof}If $D\subseteq X_{i}$ is an increasing
ordinal sequence of cofinality $>\omega$ then $\cup D\in X_{i}$.
\end{enumerate}
\end{defn}
\begin{lem}
\label{lem:great-inter}For every $i\in I$ there is a great model
$X_{i}$, such that for every $\alpha$ such that $i(\alpha)$ is
not zero, if there is $\beta_{0}<i(\alpha)_{0}$ such that $i(\alpha)_{0}\in q_{\alpha}(\beta_{0})$
and $\beta_{1}<i(\alpha)_{1}$ such that $i(\alpha)_{1}\in q_{(\alpha^{j(\kappa)})^{M[G\ast g\ast G]}}(\beta_{1})$,
then $X_{c(i,\alpha,(\beta_{0},\beta_{1}))}\subseteq X_{i}$ .
\end{lem}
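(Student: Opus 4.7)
The plan is to imitate the construction of Lemma \ref{lem:very-good-inter}, working by transfinite induction on the well ordering $\lessdot$ of $I$ (which is well founded by Lemma \ref{lem:I-well-ordered}). Fix $i\in I$ and assume $X_{i'}$ has been built for every $i' \lessdot i$. I would use Löwenheim-Skolem-Tarski in $M[G\ast g\ast H]$ to form $X_i^0 \preceq \langle H_\theta^{M[G\ast g\ast H]},\vartriangleleft\rangle$ of size $j(\kappa)$ containing the following ingredients: (a) the ordinal $j(\kappa)$ as a subset; (b) the finite chain $\{\gamma_l^i, i(\gamma_l^i)_0 : l<l_i\}$; (c) for each coordinate $\alpha$ in the (finite) support of $i$, the set $p_\alpha(\beta')$ whenever $i(\alpha)_1 = q_{I_\alpha^1}(\beta)_{\beta'}$ for some $\beta<i(\alpha)_1$; and (d) the previously constructed model $X_{c(i,\alpha,(\beta_0,\beta_1))}$ whenever $i(\alpha)_0 \in q_\alpha(\beta_0)$ and $i(\alpha)_1 \in q_{(\alpha^{j(\kappa)})^{M[G\ast g\ast H]}}(\beta_1)$ for some $\beta_0 < i(\alpha)_0$, $\beta_1 < i(\alpha)_1$. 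By the disjointness property of the $q$-maps (Definition \ref{def:q}), for each coordinate $\alpha$ there is at most one such pair $(\beta_0,\beta_1)$, and since $i$ has finite support, the total number of additional generators required is finite. Hence the demand $|X_i^0|^{M[G\ast g\ast H]} = j(\kappa)$ can be met.

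Next I iterate to achieve property \ref{enu:great-closed-cof}: given $X_i^k$ with ordinal enumeration $\langle x_\alpha^k \mid \alpha < \psi_k\rangle$ in increasing order ($|\psi_k|\le j(\kappa)$), form $X_i^{k+1}\supseteq X_i^k$ of size $j(\kappa)$ which additionally contains $\cup_{\alpha<\beta} x_\alpha^k$ for every $\beta\le\psi_k$. Then set $X_i = \bigcup_{k<\omega}X_i^k$. Since $\langle X_i^k\mid k<\omega\rangle$ is an $\omega$-sequence of elements of $M[G\ast g\ast H]$, Lemma \ref{lem:M-k-sequence} places the sequence, and hence $X_i$, in $M[G\ast g\ast H]$; standard arguments give $X_i\preceq\langle H_\theta^{M[G\ast g\ast H]},\vartriangleleft\rangle$ and $|X_i|^{M[G\ast g\ast H]}=j(\kappa)$. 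Properties \ref{enu:great-contains-j_k}–\ref{enu:great-contains-p} follow from $X_i^0\subseteq X_i$, and the inclusion $X_{c(i,\alpha,(\beta_0,\beta_1))}\subseteq X_i$ required by the lemma is ensured by ingredient (d).

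Property \ref{enu:great-closed-cof} is established exactly as in Lemma \ref{lem:very-good-inter}: given an increasing sequence $D=\langle d_\alpha\mid\alpha<\psi\rangle\subseteq X_i$ of uncountable cofinality, reduce to the case where $\psi$ is a regular uncountable cardinal and consider the map $\alpha\mapsto k(\alpha)$ sending $\alpha$ to the least $k$ with $d_\alpha\in X_i^k$. This is a regressive function on the club $\{\alpha<\psi : \alpha\ge\omega\}$, so Fodor produces a stationary $S'\subseteq\psi$ on which $k$ is constant, say equal to $k'$. Then $\cup D = \cup_{\alpha\in S'}d_\alpha$ is the supremum of an initial segment of the enumeration of $X_i^{k'}$ and therefore lies in $X_i^{k'+1}\subseteq X_i$.

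The one place that requires care, and which I expect to be the main obstacle, is the verification that the finite-support assumption on $I$ together with the disjointness of the $q$-maps actually keeps the list of generators in (c) and (d) finite. Without this, the size bound $|X_i^0|^{M[G\ast g\ast H]}=j(\kappa)$ could fail, because each $X_{c(i,\alpha,(\beta_0,\beta_1))}$ already has size $j(\kappa)$. The finiteness is what makes the induction go through uniformly for all cardinals $\gamma<\kappa^{+\kappa}$; in particular it exploits Lemma \ref{lem:cardinal-form}, which guarantees that the $M[G\ast g\ast H]$-cardinals in $[j(\kappa^+),j(\gamma)]$ are precisely the images $j(\kappa^{+\eta})$ for $0<\eta<\kappa$, so the finite-support indexing really captures all the cardinals of interest.
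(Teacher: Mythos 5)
Your proposal is correct and follows essentially the same route as the paper: transfinite induction along $\lessdot$, a L\"owenheim--Skolem--Tarski hull seeded with $j(\kappa)$, the finite chain of $\gamma_l^i$'s, the relevant $p_\alpha(\beta')$'s and the previously built models, followed by an $\omega$-iteration closed off with \lemref{M-k-sequence} and a Fodor argument for property \enuref{great-closed-cof}. You also correctly isolate the one point needing justification, namely that finite support plus the disjointness of the $q$-maps keeps the list of generators finite, which is exactly the remark the paper makes.
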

\begin{proof}
We will build by induction on $I$ (which is possible from \lemref{I-well-ordered}).
Assume we have built up to $i$, and now build for $i$. Using Lowenheim-Skolem-Tarski
build $X_{i}^{0}$ such that: 
\begin{enumerate}
\item $M[G\ast g\ast H]\models X_{i}^{0}\preceq\langle H_{\theta}^{M[G\ast g\ast H]},\vartriangleleft\rangle$.
\item $M[G\ast g\ast H]\models|X_{i}^{0}|=j(\kappa)$.
\item $j(\kappa)\subseteq X_{i}^{0}$.
\item $\forall l<l_{i}:\gamma_{l}^{i}\in X_{i}^{0}$.
\item $\forall l<l_{i}:i(\gamma_{l}^{i})_{0}\in X_{i}^{0}$.
\item For every $\alpha$ such that $i(\alpha)$ is not zero, if there is
$\beta<i(\alpha)_{1}$ such that $i(\alpha)_{1}\in q_{I_{\alpha}^{1}}(\beta)$
in the index $\beta^{'}$ (i.e. $i(\alpha)_{1}=q_{I_{\alpha}^{1}}(\alpha)_{\beta^{'}}$),
then $p_{\alpha}(\beta^{'})\subseteq X_{i}^{0}$. Notice that this
is a reasonable assumption since there is a finite number of such
$\alpha$, and $i(\alpha)_{1}$ can appear at most once in some $q_{I_{\alpha}^{1}}(\beta)$.
\item For every $\alpha$ such that $i(\alpha)$ is not zero, if there is
$\beta_{0}<i(\alpha)_{0}$ such that $i(\alpha)_{0}\in q_{I_{\alpha}^{0}}(\beta_{0})$
and $\beta_{1}<i(\alpha)_{1}$ such that $i(\alpha)_{1}\in q_{I_{\alpha}^{1}}(\beta_{1})$,
then $X_{c(i,\alpha,(\beta_{0},\beta_{1}))}\subseteq X_{i}^{0}$ (It
is already built by the induction because we only made the $\alpha$
coordinate smaller, so $c(i,\alpha,(\beta_{0},\beta_{1}))\lessdot i$).
Notice that this is a reasonable assumption since there is a finite
number of such $\alpha$, and $i(\alpha)_{0},i(\alpha)_{1}$ can appear
at most once in some $q_{I_{\alpha}^{0}}(\beta_{0}),q_{I_{\alpha}^{1}}(\beta_{1})$
respectively.
\end{enumerate}
Now work inductively on $k<\omega$ - assume we have $X_{i}^{k}$
which has $M[G\ast g\ast H]\models|X_{i}^{k}|=j(\kappa)$. Denote
by $\langle x_{\alpha}^{k}|\alpha<\psi_{k}\rangle$ an increasing
ordinal sequence which enumerates the ordinals of $X_{i}^{k}$ (notice
that $|\psi_{k}|\leq j(\kappa)$). Build $X_{i}^{k+1}$ with Lowenheim-Skolem-Tarski
such that: 
\begin{enumerate}
\item $M[G\ast g\ast H]\models X_{i}^{k+1}\preceq\langle H_{\theta}^{M[G\ast g\ast H]},\vartriangleleft\rangle$.
\item $M[G\ast g\ast H]\models|X_{i}^{k+1}|=j(\kappa)$.
\item $X_{i}^{k}\subseteq X_{i}^{k+1}$.
\item $\forall\beta\leq\psi_{k}:\cup_{\alpha<\beta}x_{\alpha}^{k}\in X_{i}^{k+1}$.
\end{enumerate}
Finally, define $X_{i}=\cup_{k<\omega}X_{i}^{k}$. Notice that we
have the wanted property from $X_{i}^{0}\subseteq X_{i}$. It is left
to check that $X_{i}$ satisfies all the properties of a great model:
\begin{enumerate}
\item Notice that the sequence $\langle X_{i}^{k}|k<\omega\rangle$ is an
$\omega$-sequence of elements of $M[G\ast g\ast H]$, so from \lemref{M-k-sequence}
it is also in $M[G\ast g\ast H]$. Therefore so is $X_{i}$, and it
is a basic property that the union of an increasing sequence of elementary
submodels is also an elementary submodel.
\item From the induction we know that all of the $X_{i}^{k}$ are of size
$j(\kappa)$ in $M[G\ast g\ast H]$, and since there are only $\aleph_{0}$
so is their union.
\item $j(\kappa)\subseteq X_{i}^{0}\subseteq X_{i}$.
\item $\forall l<l_{i}:\gamma_{l}^{i}\in X_{i}^{0}\subseteq X_{i}$.
\item $\forall l<l_{i}:i(\gamma_{l}^{i})_{0}\in X_{i}^{0}\subseteq X_{i}$.
\item For every $\alpha$ such that $i(\alpha)$ is not zero, if there is
$\beta<i(\alpha)_{1}$ such that $i(\alpha)_{1}\in q_{I_{\alpha}^{1}}(\beta)$
in the index $\beta^{'}$ (i.e. $i(\alpha)_{1}=q_{I_{\alpha}^{1}}(\alpha)_{\beta^{'}}$),
then $p_{\alpha}(\beta^{'})\subseteq X_{i}^{0}\subseteq X_{i}$. 
\item Let $\langle d_{\alpha}|\alpha<\psi\rangle=D\subseteq X_{i}$ be some
increasing ordinal sequence of cofinality $>\omega$, i.e. $cof(\psi)>\omega$.
It is clear that it is enough to look at $D$ such that $\psi$ is
regular (otherwise just take an unbounded subsequence of $D$ of size
$cof(\psi)$), so we can assume that $\psi$ is uncountable. Define
the mapping which for $\alpha<\psi$ returns the minimal $k$ such
that $d_{\alpha}\in X_{i}^{k}$. Notice that from $\alpha\geq\omega$
the function is regressive, so from Fodor's lemma there is some stationary
set $S^{'}\subseteq\psi$ and $k^{'}<\omega$ such that $\forall\alpha\in S^{'}:d_{\alpha}\in X_{i}^{k^{'}}$.
But then since $S^{'}$ is unbounded in $\psi$ we get that $\cup D=\cup_{\alpha\in S^{'}}d_{\alpha}$,
and we have $\cup_{\alpha\in S^{'}}d_{\alpha}\in X_{i}^{k^{'}+1}$
from the last requirement on $X_{i}^{k^{'}+1}$. Together we have
that $\cup D\in X_{i}^{k^{'}+1}\subseteq X_{i}$.
\end{enumerate}
\end{proof}
\begin{lem}
\label{lem:great-inter-usable}Let $i\in I$ and $Q\in(P_{j(\kappa^{+})}\alpha)^{M[G\ast g\ast H]}$
(for some $\alpha\in dom(i)$). Then there are $\beta_{0},\beta_{1}$
where $i(\alpha)_{0}<\beta_{0}<\alpha$ ($\beta_{0}$ arbitrarily
large) and $i(\alpha)_{1}<\beta_{1}<(\alpha^{j(\kappa)})^{M[G\ast g\ast H]}$
such that $X_{i}\cup Q\subseteq X_{c(i,\alpha,(\beta_{0},\beta_{1}))}$.
\end{lem}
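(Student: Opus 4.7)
The plan is to mimic the proof of \lemref{very-good-inter-usable}, now working with the general index set $I$ in place of the three-coordinate structure. Given $i\in I$, $\alpha\in dom(i)$, and $Q\in(P_{j(\kappa^{+})}\alpha)^{M[G\ast g\ast H]}$, I will exhibit a single perturbation $i'=c(i,\alpha,(\beta_0,\beta_1))$ that simultaneously places $Q$ into $X_{i'}$ (via property \enuref{great-contains-p} of great models) and dominates $X_i$ (via \lemref{great-inter}).

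First, I will use \defref{p}: since each element in the range of $p_\alpha$ appears unboundedly often, there exists some $Q'<(\alpha^{j(\kappa)})^{M[G\ast g\ast H]}$ with $Q'>i(\alpha)_1$ and $p_\alpha(Q')=Q$. Now pick $\beta_0\in q_{I_\alpha^0}(i(\alpha)_0)$ arbitrarily large below $\alpha$ (possible since $q_{I_\alpha^0}(i(\alpha)_0)$ has size $|I_\alpha^0|$ and its elements exceed $i(\alpha)_0$), and set $\beta_1=q_{I_\alpha^1}(i(\alpha)_1)_{Q'}$, the $Q'$-th element of $q_{I_\alpha^1}(i(\alpha)_1)$. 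Define $i'=c(i,\alpha,(\beta_0,\beta_1))$, so that $i=c(i',\alpha,(i(\alpha)_0,i(\alpha)_1))$.

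To see $Q\subseteq X_{i'}$: by construction $i'(\alpha)_0=\beta_0>i(\alpha)_0$ and $i'(\alpha)_1=\beta_1>i(\alpha)_1$, and $i'(\alpha)_1=q_{I_\alpha^1}(i(\alpha)_1)_{Q'}$, i.e. $i'(\alpha)_1$ appears in $q_{I_\alpha^1}(i(\alpha)_1)$ at index $Q'$. Applying property \enuref{great-contains-p} of great models to $X_{i'}$ with witness $\beta=i(\alpha)_1$ and $\beta'=Q'$ yields $p_\alpha(Q')\subseteq X_{i'}$, i.e.\ $Q\subseteq X_{i'}$. To see $X_i\subseteq X_{i'}$: the hypothesis of \lemref{great-inter} applied to $i'$ is satisfied at coordinate $\alpha$ with the witnesses $i(\alpha)_0<i'(\alpha)_0$ in $q_\alpha(i(\alpha)_0)$ and $i(\alpha)_1<i'(\alpha)_1$ in $q_{(\alpha^{j(\kappa)})^{M[G\ast g\ast H]}}(i(\alpha)_1)$, which gives $X_{c(i',\alpha,(i(\alpha)_0,i(\alpha)_1))}\subseteq X_{i'}$; but the left-hand side is exactly $X_i$.

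Combining the two inclusions gives $X_i\cup Q\subseteq X_{i'}=X_{c(i,\alpha,(\beta_0,\beta_1))}$ with $\beta_0,\beta_1$ satisfying the stated size constraints. The only mild subtlety — and it is not really an obstacle — is bookkeeping: one must verify that the coding scheme $q$ behaves correctly on both coordinates $I_\alpha^0$ and $I_\alpha^1$ simultaneously, and that the inductive construction of the $X_i$'s in \lemref{great-inter} has already handled $i'$ by the time we need it (which follows from $i\lessdot i'$ because only coordinate $\alpha$ is changed and both $\beta_0>i(\alpha)_0$ and $\beta_1>i(\alpha)_1$, so $i(\alpha)\lessdot_\alpha i'(\alpha)$ in the lexicographic order on $I_\alpha$).
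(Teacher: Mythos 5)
Your proof is correct and follows essentially the same route as the paper's: choose $Q'$ with $p_\alpha(Q')=Q$ above $i(\alpha)_1$, set $\beta_0\in q_{I_\alpha^0}(i(\alpha)_0)$ and $\beta_1=q_{I_\alpha^1}(i(\alpha)_1)_{Q'}$, then read off $Q\subseteq X_{i'}$ from property \enuref{great-contains-p} and $X_i\subseteq X_{i'}$ from \lemref{great-inter}. The bookkeeping remarks at the end match what the paper's construction already guarantees.
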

\begin{proof}
Let $i,Q,\alpha$ as above. There is some $i(\alpha)_{1}<Q^{'}<(\alpha^{j(\kappa)})^{M[G\ast g\ast H]}$
such that $p_{\alpha}(Q^{'})=Q$. So choose $\beta_{0}\in q_{I_{\alpha}^{0}}(i(\alpha)_{0})$
(as big as we want) and $\beta_{1}=q_{I_{\alpha}^{1}}(i(\alpha)_{1})_{Q^{'}}$.
Notice that if we denote $i^{'}=c(i,\alpha,(\beta_{0},\beta_{1}))$,
then $i=c(i^{'},\alpha,i(\alpha))$. Rewriting with $i^{'}$ we get
$i^{'}(\alpha)_{0}\in q_{\alpha}(i(\alpha)_{0})$ and $i^{'}(\alpha)_{1}=q_{I_{\alpha}^{1}}(i(\alpha)_{1})_{Q^{'}}$,
so from property \enuref{great-contains-p} of great models we get
$Q=p_{\alpha}(Q^{'})\subseteq X_{i^{'}}$. Also from \lemref{great-inter}
we get $X_{i}=X_{c(i^{'},\alpha,(i(\alpha)_{0},i(\alpha)_{1}))}\subseteq X_{i^{'}}$.
Together we have $X_{i}\cup Q\subseteq X_{i^{'}}=X_{c(i,\alpha,(\beta_{0},\beta_{1}))}$.
\end{proof}

\subsection{Main Lemma}
\begin{defn}
Let $S=\langle S_{i}|i\in I\rangle$ where for every $i\in I$, $S_{i}=X_{i}$
for some $X_{i}$ built by \lemref{great-inter}. Let $S^{'}=\langle S_{i}^{'}|i\in I\rangle$
where $S_{i}^{'}=S_{i}\cap(\cap_{l<l_{i}}((T_{l}^{i})^{-1})^{''}(i(\gamma_{l}^{i})_{0}+1))$.
\end{defn}
\begin{lem}
\label{lem:bounded-l}For every $l<l_{i}$ we have $S_{i}^{'}\subseteq dom(T_{l}^{i})$
and $(T_{l}^{i})^{''}(S_{i}^{'})\subseteq(i(\gamma_{l}^{i})_{0}+1)$.
\end{lem}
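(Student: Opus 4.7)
The plan is to observe that the statement is essentially definitional once the $t^{''}$ notation and the basic behaviour of $T_{l}^{i}$ are unwound. The key facts needed are that $T_{l}^{i}$ is injective with $Rng(T_{l}^{i}) = \gamma_{l}^{i}$, so that $(T_{l}^{i})^{-1}$ is a genuine function on $\gamma_{l}^{i}$, and that $i(\gamma_{l}^{i})_{0} + 1 \subseteq \gamma_{l}^{i}$.

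First I would verify injectivity of $T_{l}^{i}$ by induction on $l$: each $t_{\alpha}$ is an isomorphism, hence injective, and a composition of injective functions is injective (the base case $l=0$ is trivial since $T_{0}^{i} = Id_{j(\gamma)}$). The range identity $Rng(T_{l}^{i}) = \gamma_{l}^{i}$ is exactly \lemref{T-Im}. For the inclusion $i(\gamma_{l}^{i})_{0} + 1 \subseteq \gamma_{l}^{i}$: when $\gamma_{l}^{i} > j(\kappa^{+})$ we have $i(\gamma_{l}^{i})_{0} \in I_{\gamma_{l}^{i}}^{0} \subseteq \gamma_{l}^{i}$ directly from the definition of $I_{\gamma_{l}^{i}}^{0}$, and when $\gamma_{l}^{i} = j(\kappa^{+})$ the same conclusion holds using $I_{j(\kappa^{+})}^{0} = j^{''}\kappa^{+} \subseteq j(\kappa^{+})$. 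In either case $i(\gamma_{l}^{i})_{0} + 1 \leq \gamma_{l}^{i}$.

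Combining these, $dom((T_{l}^{i})^{-1}) = Rng(T_{l}^{i}) = \gamma_{l}^{i}$ contains $i(\gamma_{l}^{i})_{0} + 1$, so by the definition $t^{''}A = \{t(a) \mid a \in A \cap dom(t)\}$ we compute
\[
((T_{l}^{i})^{-1})^{''}(i(\gamma_{l}^{i})_{0} + 1) = \{(T_{l}^{i})^{-1}(a) \mid a \in i(\gamma_{l}^{i})_{0} + 1\} = \{x \in dom(T_{l}^{i}) \mid T_{l}^{i}(x) \in i(\gamma_{l}^{i})_{0} + 1\}.
\]
The rightmost set is, by construction, contained in $dom(T_{l}^{i})$ and mapped by $T_{l}^{i}$ into $i(\gamma_{l}^{i})_{0} + 1$. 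Since $S_{i}^{'}$ is built by intersecting with $((T_{l}^{i})^{-1})^{''}(i(\gamma_{l}^{i})_{0} + 1)$ for every $l < l_{i}$, both halves of the lemma follow immediately: every $x \in S_{i}^{'}$ lies in $dom(T_{l}^{i})$, giving $S_{i}^{'} \subseteq dom(T_{l}^{i})$, and $T_{l}^{i}(x) \in i(\gamma_{l}^{i})_{0} + 1$, giving $(T_{l}^{i})^{''}(S_{i}^{'}) \subseteq i(\gamma_{l}^{i})_{0} + 1$.

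There is essentially no obstacle beyond unwinding notation; all the bookkeeping about injectivity, ranges and ordinal bounds is either inductive on $l$ or already recorded in \lemref{T-Im} and in the definitions of $I_{\alpha}$ and $\gamma_{l}^{i}$.
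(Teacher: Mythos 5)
Your proof is correct and takes essentially the same route as the paper, which simply observes that $S_{i}^{'}\subseteq((T_{l}^{i})^{-1})^{''}(i(\gamma_{l}^{i})_{0}+1)$ by definition and reads off both conclusions; your additional checks (injectivity of $T_{l}^{i}$, $Rng(T_{l}^{i})=\gamma_{l}^{i}$, and $i(\gamma_{l}^{i})_{0}+1\subseteq\gamma_{l}^{i}$) are a harmless, slightly more explicit unwinding of the same definitional argument.
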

\begin{proof}
From the definition we have $S_{i}^{'}\subseteq((T_{l}^{i})^{-1})^{''}(i(\gamma_{l}^{i})_{0}+1)$
which means that $S_{i}^{'}\subseteq dom(T_{l}^{i})$ and $(T_{l}^{i})^{''}(S_{i}^{'})\subseteq(i(\gamma_{l}^{i})_{0}+1)$.
\end{proof}
\begin{lem}
\label{lem:contain-t-i-gamma}For every $i\in I$ and $l<l_{i}-1$,
we have $t_{i(\gamma_{l}^{i})_{0}}\in S_{i}$.
\end{lem}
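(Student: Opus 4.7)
The plan is to reduce the statement to \lemref{contain-t-alpha} together with property \enuref{great-i-gamma} of great models. First I would note that $S_i = X_i$ is, by the very first clause in the definition of a great model, an elementary submodel $S_i \preceq \langle H_\theta^{M[G\ast g\ast H]}, \vartriangleleft\rangle$ in $M[G\ast g\ast H]$.

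Next I would invoke property \enuref{great-i-gamma}, which asserts that $i(\gamma_l^i)_0 \in X_i$ for every $l < l_i$; in particular this holds for the range $l < l_i - 1$ required by the lemma. Given this membership, \lemref{contain-t-alpha} applies verbatim to the ordinal $\alpha = i(\gamma_l^i)_0 \in S_i$: that lemma says that for any elementary submodel $X$ of $\langle H_\theta^{M[G\ast g\ast H]}, \vartriangleleft\rangle$ containing an ordinal $\alpha$, the canonical isomorphism $t_\alpha$ (minimal under $\vartriangleleft$ from $\alpha+1$ onto $|\alpha|^{M[G\ast g\ast H]}$) belongs to $X$. Applying this with $X = S_i$ and $\alpha = i(\gamma_l^i)_0$ yields $t_{i(\gamma_l^i)_0} \in S_i$, which is exactly what we need.

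There is essentially no obstacle here: the lemma is a direct bookkeeping consequence of the definitions, since the definition of great model was engineered precisely so that all the ordinals appearing as indices of the $t$'s used in the definition of $T_l^i$ belong to $X_i$, after which \lemref{contain-t-alpha} does all the real work. The only mild point worth checking is that $l < l_i - 1$ ensures $\gamma_l^i > j(\kappa^+)$ so that $i(\gamma_l^i)_0$ is a genuine ordinal rather than a degenerate last coordinate; this is immediate from the inductive definition of the sequence $\langle \gamma_l^i \rangle$ and the stopping condition $\gamma_{l_i - 1}^i = j(\kappa^+)$.
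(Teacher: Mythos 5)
Your proposal is correct and is exactly the paper's argument: property \enuref{great-i-gamma} gives $i(\gamma_{l}^{i})_{0}\in S_{i}$, and then \lemref{contain-t-alpha} yields $t_{i(\gamma_{l}^{i})_{0}}\in S_{i}$. The extra remark about the role of $l<l_{i}-1$ is a reasonable sanity check but not needed beyond what the paper states.
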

\begin{proof}
From property \enuref{great-i-gamma} of great models we have $i(\gamma_{l}^{i})_{0}\in S_{i}$,
so we can use \lemref{contain-t-alpha} and get that $t_{i(\gamma_{l}^{i})_{0}}\in S_{i}$.
\end{proof}
\begin{lem}
\label{lem:contains-T}For every $i\in I$ and $l<l_{i}$, we have
$T_{l}^{i}\in S_{i}$.
\end{lem}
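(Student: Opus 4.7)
The plan is to proceed by induction on $l$. The key observation is that $T_l^i$ is built as a composition of $l$ many functions $t_{i(\gamma_k^i)_0}$ for $k<l$, and $S_i$ is an elementary submodel of $\langle H_{\theta}^{M[G\ast g\ast H]},\vartriangleleft\rangle$, hence closed under any operation definable from parameters in $S_i$. So the task really reduces to verifying that each constituent function lives in $S_i$, and then invoking elementarity to assemble them.

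For the base case $l=0$, I would note that $T_0^i = Id_{j(\gamma)}$, and by property \enuref{great-gamma} of great models we have $\gamma_0^i = j(\gamma)\in S_i$. Since the identity function on an ordinal $\alpha$ is definable from $\alpha$ in $H_{\theta}^{M[G\ast g\ast H]}$, elementarity of $S_i$ yields $Id_{j(\gamma)}\in S_i$.

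For the inductive step, I would assume $T_l^i\in S_i$ for some $0\le l<l_i-1$ and show $T_{l+1}^i = t_{i(\gamma_l^i)_0}\circ T_l^i\in S_i$. Since $l<l_i-1$, \lemref{contain-t-i-gamma} applies and gives $t_{i(\gamma_l^i)_0}\in S_i$. With both $t_{i(\gamma_l^i)_0}$ and $T_l^i$ in $S_i$, their composition is definable in the structure $\langle H_{\theta}^{M[G\ast g\ast H]},\vartriangleleft\rangle$ from these two parameters, so again by elementarity it lies in $S_i$. Iterating gives $T_l^i\in S_i$ for every $l<l_i$.

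There is no real obstacle here, but one has to be careful about the indexing: the recursion produces $T_{l+1}^i$ from $T_l^i$ using $t_{i(\gamma_l^i)_0}$, which \lemref{contain-t-i-gamma} guarantees to be in $S_i$ exactly in the range $l<l_i-1$, i.e.\ precisely what we need to reach $T_{l_i-1}^i$. The endpoint function $t_{i(\gamma_{l_i-1}^i)_0}$ (whose domain is $j(\kappa^+)$) is never actually composed with anything, so its absence from \lemref{contain-t-i-gamma} causes no trouble.
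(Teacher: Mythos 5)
Your proof is correct and follows essentially the same route as the paper: the paper's own argument is just the one-line observation that $T_{l}^{i}$ is the composition of the $t_{i(\gamma_{k}^{i})_{0}}$ for $k<l$, each of which lies in $S_{i}$ by Lemma \ref{lem:contain-t-i-gamma}, so the composition is in $S_{i}$ by elementarity. Your version merely makes the induction and the base case $T_{0}^{i}=Id_{j(\gamma)}$ explicit, and your indexing check ($k<l_{i}-1$ suffices to reach $T_{l_{i}-1}^{i}$) is accurate.
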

\begin{proof}
Since $T_{l}^{i}=t_{i(\gamma_{l-1}^{i})_{0}}\circ\ldots\circ t_{i(\gamma_{0}^{i})_{0}}$
simply use \lemref{contain-t-i-gamma} and get the wanted result.
\end{proof}
\begin{lem}
\label{lem:T-definable}For every $i\in I$ and $l<l_{i}$, and for
every $\alpha$, $\alpha\in S_{i}$ iff $T_{l}^{i}(\alpha)\in S_{i}$.
\end{lem}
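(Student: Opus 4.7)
The plan is to use the fact that $T_{l}^{i}$ is a definable bijection that already lives inside $S_{i}$. By \lemref{contains-T} we have $T_{l}^{i}\in S_{i}$. Each factor $t_{i(\gamma_{k}^{i})_{0}}$ (for $k<l$) is by definition an isomorphism from $i(\gamma_{k}^{i})_{0}+1$ onto $|i(\gamma_{k}^{i})_{0}|^{M[G\ast g\ast H]}=\gamma_{k+1}^{i}$, i.e. a bijection, so their composition $T_{l}^{i}$ is a bijection from $dom(T_{l}^{i})\subseteq j(\gamma)$ onto $\gamma_{l}^{i}$ (as recorded in \lemref{T-Im}). In particular $(T_{l}^{i})^{-1}$ is a well-defined element of $H_{\theta}^{M[G\ast g\ast H]}$ which is definable from $T_{l}^{i}$.

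For the forward direction I would argue as follows. Suppose $\alpha\in S_{i}\cap dom(T_{l}^{i})$. Since $T_{l}^{i}\in S_{i}$, $dom(T_{l}^{i})\in S_{i}$, so $\alpha\in dom(T_{l}^{i})$ is witnessed inside $S_{i}$. Then
\[
\langle H_{\theta}^{M[G\ast g\ast H]},\vartriangleleft\rangle\models\exists!\beta\,(T_{l}^{i}(\alpha)=\beta),
\]
and by elementarity the unique such $\beta$ is an element of $S_{i}$; this $\beta$ must equal $T_{l}^{i}(\alpha)$, so $T_{l}^{i}(\alpha)\in S_{i}$.

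For the reverse direction, observe that since $T_{l}^{i}\in S_{i}$ and inversion of a bijection is definable, the bijection $(T_{l}^{i})^{-1}$ also belongs to $S_{i}$ by elementarity. Now run the same Tarski--Vaught style argument with $(T_{l}^{i})^{-1}$ in place of $T_{l}^{i}$: if $T_{l}^{i}(\alpha)\in S_{i}$ then $(T_{l}^{i})^{-1}(T_{l}^{i}(\alpha))\in S_{i}$, and this value is precisely $\alpha$.

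I do not expect a real obstacle here; the content is entirely elementarity plus the closure fact that both $T_{l}^{i}$ and its inverse lie in $S_{i}$. The only mild point to keep straight is the implicit restriction to $\alpha\in dom(T_{l}^{i})$, which is unproblematic since outside the domain the expression $T_{l}^{i}(\alpha)$ has no referent and the biconditional is read vacuously. In spirit this lemma simply packages, in the general $\gamma$ setting, the kind of elementarity step that was invoked in \partref{3} for the particular compositions $t_{i(2)_{0}}$ and $t_{i(1)_{0}}\circ t_{i(2)_{0}}$.
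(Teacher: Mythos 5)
Your proof is correct and is essentially the paper's own argument, which also reduces the lemma to $T_{l}^{i}\in S_{i}$ (via \lemref{contains-T}) and then invokes definability/elementarity in both directions. Your extra care about the implicit restriction to $\alpha\in dom(T_{l}^{i})$ and the explicit use of $(T_{l}^{i})^{-1}$ only spells out what the paper leaves tacit.
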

\begin{proof}
From \lemref{contains-T} we have $T_{l}^{i}\in S_{i}$, so if $\alpha\in S_{i}$
then $T_{l}^{i}(\alpha)$ is definable in $S_{i}$ and therefore in
it, and vice versa.
\end{proof}
\begin{lem}
\label{lem:great-main}Let $i\in I$ and $X^{*}\subseteq S_{i}^{'}$.
Suppose that for every $a\in X^{*}$ we have some $i_{a}\in I$ where
$i_{a}\lessdot i$ and $a\in S_{i_{a}}^{'}$. Then there are some
$\langle i_{\alpha}^{'}|\alpha<\kappa\rangle$ such that $X^{*}\subseteq\cup_{\alpha<\kappa}S_{i_{\alpha}^{'}}^{'}$
and $\forall\alpha<\kappa:i_{\alpha}^{'}\lessdot i$.
\end{lem}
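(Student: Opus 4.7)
The plan is to induct on $i$ in the well-order $\lessdot$ on $I$. Given $i$ and $X^*$ satisfying the hypotheses, I would partition $X^*$ according to the highest cardinal $\alpha$ at which $i_a$ disagrees with $i$: set $X_\alpha^{*} = \{a \in X^{*} : \alpha \text{ is the largest coordinate with } i_a(\alpha) \neq i(\alpha)\}$. Since $i_a \lessdot i$ forces the witnessing coordinate to lie in the support of $i$ (otherwise $i(\alpha)$ would already be the minimum of $I_\alpha$, leaving no room beneath it), and since every element of $I$ has finite support, only finitely many pieces arise. It suffices to cover each nonempty $X_\alpha^{*}$ by $\leq\kappa$ elements of $S^{'}$ indexed below $i$ and take the union.

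For a fixed $\alpha$, I locate the level $l < l_i$ at which $\alpha$ appears in the descending chain $\gamma_{\cdot}^{i}$ and set $\alpha^{*} = \cup (T_l^{i})^{''}X_\alpha^{*}$, then secondarily induct on $\alpha^{*}$. The easy sub-cases are when $\alpha^{*} \in (T_l^{i})^{''}X_\alpha^{*}$ (strip it off and add the single witness $i_{\alpha^{*}}$) and when $cof(\alpha^{*}) \leq \kappa$ (apply the induction to each element of a witnessing sequence and union the resulting $\leq\kappa$ collections of indices).

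The essential case is $cof(\alpha^{*}) \geq \kappa^{+}$. First I would rule out $\alpha^{*} > i^{'}(\alpha)_0$ for every admissible $i^{'} \lessdot i$ matching $i$ above $\alpha$: otherwise \lemref{union-j-assoc} yields
\[
\alpha^{*} = \cup_{a \in X_\alpha^{*}} (T_l^{i})(a) = \cup_{a \in X_\alpha^{*}} i_a(\alpha)_0 = j(\cup_{a \in X_\alpha^{*}} j^{-1}(i_a(\alpha)_0)),
\]
forcing $M[G\ast g\ast H] \models cof(\alpha^{*}) \geq j(\kappa)^{+}$, which contradicts $M[G\ast g\ast H] \models cof(\alpha^{*}) \leq j(\kappa)$ (since $S_i$ has size $j(\kappa)$ and contains an unbounded preimage of $\alpha^{*}$). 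Picking a minimal such $\alpha^{**} \geq \alpha^{*}$ in a witnessing model $A = S_{i^{'}}$, a $C_{\alpha^{**}}$-argument against \lemref{great-inter-usable} forces $\alpha^{**} = \alpha^{*}$. I then enlarge $i^{'}$ via \lemref{great-inter-usable} to some $\tilde{i}$ with $A \cup (S_i \cap \gamma_{l+1}^{i}) \subseteq S_{\tilde{i}}$, and for each $a \in X_\alpha^{*}$ iterate $\beta_{k+1} = \min(C_{\beta_k} \setminus (T_l^{i})(a))$ starting from $\beta_0 = \alpha^{*}$. This chain terminates in finitely many steps and stays inside $S_{\tilde{i}}$, because each $\delta_k$ with $\beta_{k+1} = C_{\beta_k}(\delta_k)$ lies in $S_i \cap \gamma_{l+1}^{i} \subseteq S_{\tilde{i}}$, and $a$ is recovered from $(T_l^{i})(a)$ via \lemref{contains-T} and \lemref{T-definable}. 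Finally, to pass from $X_\alpha^{*} \subseteq S_{\tilde{i}}$ to $X_\alpha^{*} \subseteq S_{\tilde{i}}^{'}$, I apply \lemref{great-inter-usable} repeatedly at coordinates strictly below $\alpha$ to enforce each of the required $((T_{l^{'}}^{\tilde{i}})^{-1})^{''}(\tilde{i}(\gamma_{l^{'}}^{\tilde{i}})_0+1)$ bounds for $l^{'} > l$; the resulting $\tilde{\tilde{i}}$ is still $\lessdot i$ because only coordinates below $\alpha$ were modified.

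The main obstacle I anticipate is at the bottom level $l = l_i - 1$ where $\gamma_l^{i} = j(\kappa^{+})$: there is no lower level available to absorb new ordinals through \lemref{great-inter-usable}, so the step-down argument must rely directly on property \enuref{great-contains-j_k} (that $j(\kappa) \subseteq X_{i^{'}}$) to keep the $\delta_k$'s inside the witnessing model, exactly as in the $X_0^{*}$ sub-case of Part 3. A secondary subtlety is verifying that the $\gamma_\cdot^{i}$ chain actually visits each cardinal $\alpha$ that can serve as a highest-difference coordinate, so the level $l$ is unambiguously defined for every partition class; this should follow from the recursion $\gamma_{l+1}^{i} = |i(\gamma_l^{i})_0|^{M[G\ast g\ast H]}$ tracking precisely the active cardinals encoded by $i$.
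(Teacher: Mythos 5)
Your overall architecture matches the paper's: split $X^{*}$ into finitely many pieces according to where $i_{a}$ first differs from $i$, induct secondarily on $\alpha^{*}=\cup T^{''}X^{*}$, dispose of the cases $\alpha^{*}\in T^{''}X^{*}$ and $cof(\alpha^{*})\leq\kappa$, get the cofinality contradiction from \lemref{union-j-assoc}, and run the $C_{\alpha^{**}}$ walk-down. But two steps as you describe them do not go through.

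First, the displayed identity $\cup_{a}(T_{l}^{i})(a)=\cup_{a}i_{a}(\alpha)_{0}$ is unjustified when the highest-difference coordinate $\alpha$ is not one of the chain cardinals $\gamma_{l}^{i}$. The only bound the hypothesis $a\in S_{i_{a}}^{'}$ gives is $T_{l}^{i_{a}}(a)\leq i_{a}(\gamma_{l}^{i_{a}})_{0}$, a bound at the \emph{chain} coordinate, since $S_{i_{a}}^{'}$ is cut down only by the sets $((T_{l'}^{i_{a}})^{-1})^{''}(i_{a}(\gamma_{l'}^{i_{a}})_{0}+1)$; nothing in the definition of $S^{'}$ relates $T_{l}^{i}(a)$ to $i_{a}(\alpha)_{0}$ for a support coordinate $\alpha$ strictly between $\gamma_{l}^{i}$ and $\gamma_{l-1}^{i}$. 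Your flagged ``secondary subtlety'' --- that the chain visits every possible highest-difference coordinate --- is in fact false: $i$ can have support at $j(\kappa^{+5})$ while $i(j(\kappa^{+10}))_{0}<j(\kappa^{+5})$ makes the chain jump past it. The paper's remedy is to partition by the interval $[\gamma_{l}^{i},\gamma_{l-1}^{i})$ containing the first-difference coordinate (still finitely many pieces since $l_{i}<\omega$) and to compare $T_{l}^{i}(a)$ with $i_{a}(\gamma_{l}^{i_{a}})_{0}$ throughout, which is the quantity the definition of $S^{'}$ actually controls.

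Second, the final upgrade from $X^{*}\subseteq S_{\tilde{i}}$ to a cover by $S^{'}$-sets cannot in general be achieved by a single $\tilde{\tilde{i}}$. For $l<l^{'}<l_{i}$ the set $(T_{l^{'}}^{i^{''}})^{''}X^{*}$ lives in $\gamma^{'}=\gamma_{l^{'}}^{i^{''}}$, and when $M[G\ast g\ast H]\models cof(\gamma^{'})<\kappa$ --- which happens for $\gamma^{'}=j(\kappa^{+\eta})$ with $\eta$ a limit of small cofinality, the genuinely new phenomenon of this section compared with \partref{3} --- that image may be unbounded in $\gamma^{'}$, so no single application of \lemref{great-inter-usable} captures it. The paper handles this by a further induction on the maximal offending $\gamma^{'}$, splitting $X^{*}$ along a short cofinal sequence and recursing, so that this stage alone may contribute up to $\kappa$ indices. (A smaller slip: in the walk-down you absorb $S_{i}\cap\gamma_{l+1}^{i}$ into $S_{\tilde{i}}$, but the club indices $\delta_{k}$ are only bounded by $|\alpha^{*}|^{M[G\ast g\ast H]}$, which can exceed $\gamma_{l+1}^{i}$; the paper uses $S_{i}\cap|\alpha^{*}|^{M[G\ast g\ast H]}$.)
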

\begin{proof}
Assume by induction the claim is true up to $i$, and let us prove
it for $i$. First, for every $l<l_{i}$ define: 
\begin{align*}
I_{i,l}=\{i^{'}\in I| & \exists\alpha\in dom(i)\cap[\gamma_{l}^{i},\gamma_{l-1}^{i}):\\
 & i^{'}(\alpha)\lessdot_{\alpha}i(\alpha)\land\forall\alpha^{'}\in dom(i)\cap(\alpha,\gamma_{0}^{i}]:i(\alpha^{'})=i^{'}(\alpha^{'})\}
\end{align*}
\[
X_{l}^{*}=\{a\in X^{*}|i_{a}\in I_{i,l}\}
\]
I.e. $X_{l}^{*}$ is all the $a\in X^{*}$ such that first coordinate
(from the top) in which $i_{a}$ differ from $i$ is $\alpha$ where
$\gamma_{l}^{i}\leq\alpha<\gamma_{l-1}^{i}$ (where for $l=0$ just
ask for $\gamma_{0}^{i}\leq\alpha$). Since there are only $l_{i}<\omega$
options for $l$, we can prove the wanted result for each set separately
and then just take the union of all the results. So it is enough to
prove for one such set, and so we will assume there is some $l<\omega$
such that if the first index (from the top) in which $i_{a}$ differs
from $i$ is $\alpha$ then $\gamma_{l}^{i}\leq\alpha<\gamma_{l-1}^{i}$.
Notice that since for every $l^{'}<l$ we have $\alpha<\gamma_{l^{'}}^{i}$
we get that $T_{l}^{i}=T_{l}^{i_{a}}$. For simplicity we will denote
in this scope $T=T_{l}^{i}$. Now denote $\alpha^{*}=\cup T^{''}X^{*}$
and we will proceed by induction on $\alpha^{*}$. If $\alpha^{*}\in T^{''}X^{*}$,
then $T^{-1}(\alpha^{*})\in X^{*}$. Notice we can throw $T^{-1}(\alpha^{*})$
from $X^{*}$, prove without it, and finally add $i_{T^{-1}(\alpha^{*})}$
to the (at most) $\kappa$ indices we already have, which will still
be at most $\kappa$ indices. So assume $\alpha^{*}\notin T^{''}X^{*}$.
Notice that if $\alpha^{*}$ is a successor ordinal then $\alpha^{*}\in T^{''}X^{*}$,
which means it must be a limit ordinal. If $cof(\alpha^{*})\leq\kappa$,
we can take a club $\langle\alpha_{\beta}^{*}|\beta<cof(\alpha^{*})\rangle$
witnessing its cofinality, and then use the induction on each of $(T^{-1})^{''}((T^{''}X^{*})\cap\alpha_{\beta}^{*})$
(Notice that their union is exactly $X^{*}$), and finally take the
union of all the resulting indices. Since there are at most $\kappa$
sets of at most $\kappa$ indices, their union is of size at most
$\kappa$ as well. So we can assume that $cof(\alpha^{*})\geq\kappa^{+}$.
Notice that $M[G\ast g\ast H]\models cof(\alpha^{*})\leq j(\kappa)$
since $T^{''}S_{i}\cap\alpha^{*}$ is unbounded in $\alpha^{*}$ and
of size at most $j(\kappa)$ in $M[G\ast g\ast H]$. Now assume first
that $\alpha^{*}>i^{'}(\gamma_{l}^{i^{'}})_{0}$ for all $i^{'}\in I_{i,l}$
such that $i^{'}\lessdot i$ (notice that all the $i_{a}$ for $a\in X^{*}$
are such $i^{'}$). We get:
\begin{align*}
\alpha^{*} & =\cup T^{''}X^{*}=\cup_{a\in X^{*}}T(a)=\cup_{a\in X^{*}}i_{a}(\gamma_{l}^{i_{a}})_{0}=\\
 & =\cup_{a\in X^{*}}j(j^{-1}(i_{a}(\gamma_{l}^{i_{a}})_{0}))=j(\cup_{a\in X^{*}}j^{-1}(i_{a}(\gamma_{l}^{i_{a}})_{0}))
\end{align*}
For the third equality $\leq$ is because $T(a)\leq i_{a}(\gamma_{l}^{i_{a}})_{0}$
(since $a\in S_{i_{a}}^{'}\subseteq(T^{-1})^{''}(i_{a}(\gamma_{l}^{i_{a}})_{0}+1)$),
and $\geq$ is from our assumption. The fifth equality is from \lemref{union-j-assoc}
(where the $j^{-1}(i_{a}(\gamma_{l}^{i_{a}})_{0})$'s are of cofinality
$\geq\kappa^{+}$ since if they aren't, so is the cofinality of the
$i_{a}(\gamma_{l}^{i_{a}})_{0}$'s, of which the limit is $\alpha^{*}$
which is of cofinality $\geq\kappa^{+}$). From this we get that $M[G\ast g\ast H]\models cof(\alpha^{*})\geq j(\kappa)^{+}$
which is impossible since we have seen that $M[G\ast g\ast H]\models cof(\alpha^{*})\leq j(\kappa)$.

So we can assume that there is some such $i^{'}$ with $\alpha^{*}\leq i^{'}(\gamma_{l}^{i^{'}})_{0}$.
Since $i^{'}(\gamma_{l}^{i^{'}})_{0}\in S_{i^{'}}$ this means every
such $S_{i^{'}}$ contains ordinals which are $\geq\alpha^{*}$. Denote
the minimal possible such ordinal that is $\geq\alpha^{*}$ by $\alpha^{**}$,
and denote the model by $A=S_{i^{'}}$ (i.e. $\alpha^{**}\in A$).
Remember that $\alpha^{**}\leq i^{'}(\gamma_{l}^{i^{'}})_{0}<\gamma_{l}^{i}$. 

First, assume $\alpha^{*}<\alpha^{**}$. From \lemref{club} $C_{\alpha^{**}}\in A$.
There is some $\delta<cof(\alpha^{**})^{M[G\ast g\ast H]}$ such that
$C_{\alpha^{**}}(\delta)\geq\alpha^{*}$. If $\alpha^{**}<j(\kappa^{+})$
then $cof(\alpha^{**})^{M[G\ast g\ast H]}\leq j(\kappa)$, and then
from property \enuref{great-contains-j_k} of great models we have
$\delta\in A$ which means that $C_{\alpha^{**}}(\delta)\in A$. But
then from \lemref{T-definable} $T^{-1}(C_{\alpha^{**}}(\delta))\in A$,
which means that $\alpha^{*}\leq C_{\alpha^{**}}(\delta)\in T^{''}A$
which contradicts the minimality of $\alpha^{**}$. Otherwise $\alpha^{**}\geq j(\kappa^{+})$,
so together with $\alpha^{**}<\gamma_{l}^{i}$ and $\gamma_{l_{i}-1}=j(\kappa^{+})$,
we can assume $l<l_{i}-1$ and $j(\kappa^{+})<\gamma_{l}^{i}$. Also
$\delta<cof(\alpha^{**})^{M[G\ast g\ast H]}\leq|\alpha^{**}|^{M[G\ast g\ast H]}<\gamma_{l}^{i}$.
So notice that if we change $i^{'}$ in the coordinate $\alpha^{'}=|\alpha^{**}|^{M[G\ast g\ast H]}$
it will still be $\lessdot i$, because they differ already in a coordinate
$\geq\gamma_{l}^{i}$. From \lemref{great-inter-usable} there is
some $\beta$ such that $A,\{\delta\}\subseteq S_{c(i^{'},\alpha^{'},\beta)}$.
But then since $\delta,C_{\alpha^{**}}\in S_{c(i^{'},\alpha^{'},\beta)}$
again - $C_{\alpha^{**}}(\delta)\in S_{c(i^{'},\alpha^{'},\beta)}$,
and then also $T^{-1}(C_{\alpha^{**}}(\delta))\in S_{c(i^{'},\alpha^{'},\beta)}$,
which means that $\alpha^{*}\leq C_{\alpha^{**}}(\delta)\in T^{''}S_{c(i^{'},\alpha^{'},\beta)}$
which contradicts the minimality of $\alpha^{**}$.

So assume $\alpha^{*}=\alpha^{**}$. Again we will check first the
case where $\alpha^{*}<j(\kappa^{+})$. Then for every $a\in X^{*}$
we can define $\beta_{0}=\alpha^{*}$, and then inductively $\beta_{k+1}=min(C_{\beta_{k}}\setminus T(a))$
until $\beta_{k+1}=T(a)$ for some $k$ (it has to happen, otherwise
we have a decreasing sequence of ordinals with no minimum). Notice
that since $\beta_{0}=\alpha^{*}\in T^{''}A$ from \lemref{T-definable}
we get $\beta_{0}\in A$. All the indexes into the $C_{\beta_{k}}$
are $\leq j(\kappa)$ (since we are working on cofinalities of ordinals
$<j(\kappa^{+})$), so by induction all the $\beta_{k}$ are in $A$
and we get $T(a)\in A$. Finally, again from \lemref{T-definable},
we get $a\in A$ which is true for all $a\in X^{*}$ and finishes
the proof. So assume $j(\kappa^{+})\leq\alpha^{*}$ and denote $\alpha^{'}=|\alpha^{*}|^{M[G\ast g\ast H]}$.
Again using \lemref{great-inter-usable} there is some $\beta$ such
that $A,S_{i}\cap\alpha^{'}\subseteq S_{c(i^{'},\alpha^{'},\beta)}$.
Denote $i^{''}=c(i^{'},\alpha^{'},\beta)$ and notice that as before
$i^{''}\lessdot i$. Then for every $a\in X^{*}$ we can define $\beta_{0}=\alpha^{*}$,
and then inductively $\beta_{k+1}=min(C_{\beta_{k}}\setminus T(a))$
until $\beta_{k+1}=T(a)$ for some $k$ (it has to happen, otherwise
we have a decreasing sequence of ordinals with no minimum). Notice
that since $\beta_{0}=\alpha^{*}\in T^{''}A$ from \lemref{T-definable}
we get $\beta_{0}\in A$. Also since $X^{*}\subseteq S_{i}$ and $T$
is definable in $S_{i}$ we have $T^{''}X^{*}\subseteq S_{i}$ so
$\alpha^{*}=\cup T^{''}X^{*}$ is the limit of a sequence in $S_{i}$
and from property \enuref{great-closed-cof} of great models we get
$\beta_{0}=\alpha^{*}\in S_{i}$. Since $a\in S_{i}$ and therefore
$T(a)\in S_{i}$ we can see that inductively from \lemref{club} all
the $\beta_{k}$ are in $S_{i}$ as well. That means that if $\beta_{k+1}=C_{\beta_{k}}(\delta_{k})$
then $\delta_{k}\in S_{i}$. Notice that all the $\delta_{k}$ are
$<\alpha^{'}$ (since we are working on cofinalities of ordinals $\leq\alpha^{*}$),
so since $S_{i}\cap\alpha^{'}\subseteq S_{i^{''}}$ they are in $S_{i^{''}}$.
Again by induction since $\beta_{0}\in A\subseteq S_{i^{''}}$ all
the $\beta_{k}$ are in $S_{i^{''}}$, which means that $T(a)$ as
well. Then from \lemref{T-definable} we get $a\in S_{i^{''}}$ which
is true for all $a\in X^{*}$, i.e. $X^{*}\subseteq S_{i^{''}}$.

Remember that what we actually want is $X^{*}\subseteq S_{i^{''}}^{'}$,
but that is not necessarily true. So we will show that there are $\langle i_{\alpha}^{'}|\alpha<\kappa\rangle$,
all $\lessdot i$, which we can build from $i^{''}$ such that $X^{*}\subseteq\cup_{\alpha<\kappa}S_{i_{\alpha}^{'}}^{'}$.
Notice that since $X^{*}\subseteq S_{i}^{'}=S_{i}\cap(\cap_{l^{'}<l_{i}}((T_{l^{'}}^{i})^{-1})^{''}(i(\gamma_{l^{'}}^{i})_{0}+1))$
we have that $X^{*}\subseteq((T_{l^{'}}^{i})^{-1})^{''}(i(\gamma_{l^{'}}^{i})_{0}+1)$
for every $l^{'}<l_{i}$. Since we know the first coordinate in which
$i^{''},i$ differ is $<\gamma_{l-1}^{i}$, we have $X^{*}\subseteq((T_{l^{'}}^{i^{''}})^{-1})^{''}(i^{''}(\gamma_{l^{'}}^{i^{''}})_{0}+1)$
for every $l^{'}<l$. Also, notice that from the definition of $i^{'}$
we have $i^{'}(\gamma_{l}^{i^{'}})_{0}\geq\alpha^{*}$. Now if $a\in X^{*}$
then from the definition $T(a)\leq\alpha^{*}$, so together we have
$a\in T^{-1}(i^{'}(\gamma_{l}^{i^{'}})_{0}+1)$. Since $i^{'},i^{''}$
differ in $\alpha^{'}<\gamma_{l}^{i}$ we get $a\in((T_{l}^{i^{''}})^{-1})^{''}(i^{''}(\gamma_{l}^{i^{''}})_{0}+1)$.
So from the definition of $S_{i}^{'}$ the problem is with $X^{*}\nsubseteq((T_{l^{'}}^{i^{''}})^{-1})^{''}(i^{''}(\gamma_{l^{'}}^{i^{''}})_{0}+1)$
where $l<l^{'}<l_{i}$. So denote $\gamma^{'}$ to be the maximal
of the $\gamma_{l^{'}}^{i^{''}}$ for which $X^{*}\nsubseteq((T_{l^{'}}^{i^{''}})^{-1})^{''}(i^{''}(\gamma_{l^{'}}^{i^{''}})_{0}+1)$
(If there is no such $l^{'}$ we have $X^{*}\subseteq S_{i^{''}}^{'}$
and we are done). This means that $\gamma^{'}=\gamma_{l^{'}}^{i^{''}}$
for some $l<l^{'}<l_{i}$, i.e. $\gamma^{'}<\gamma_{l}^{i^{''}}=\gamma_{l}^{i}$.
We will continue by induction on $\gamma^{'}$. If $M[G\ast g\ast H]\models cof(\gamma^{'})>j(\kappa)$
then $(T_{l^{'}}^{i^{''}})^{''}X^{*}$ is bounded in $\gamma^{'}$
and so from \lemref{great-inter-usable} we can change $i^{''}$ on
the coordinate $\gamma^{'}$ to get $\tilde{i}$, such that $\tilde{i}(\gamma^{'})_{0}>\cup(T_{l^{'}}^{i^{''}})^{''}X^{*}$.
This will mean that $X^{*}\subseteq((T_{l^{'}}^{\tilde{i}})^{-1})^{''}(\tilde{i}(\gamma^{'})_{0}+1)$,
and we can continue from the induction on $\tilde{i}$ (notice that
$i^{''}\lessdot i$ and they differ in some coordinate $\geq\gamma_{l}^{i}$,
so changing $i^{''}$ in the coordinate $\gamma^{'}<\gamma_{l}^{i}$
to get $\tilde{i}$ still gives us $\tilde{i}\lessdot i$). So assume
$M[G\ast g\ast H]\models cof(\gamma^{'})\leq j(\kappa)$. Since $\kappa\leq j^{-1}(\gamma^{'})<\kappa^{+\kappa}$
we get $M[G\ast g\ast H]\models cof(\gamma^{'})<\kappa$. So take
some increasing sequence $\langle d_{\alpha}|\alpha<cof(\gamma^{'})\rangle\in M[G\ast g\ast H]$
which witnesses this property (i.e. $\cup d_{\alpha}=\gamma^{'}$).
Now for every $\alpha$ use \lemref{great-inter-usable} to change
$i^{''}$ on the coordinate $\gamma^{'}$ to get $i_{\alpha}^{''}$
such that $i_{\alpha}^{''}(\gamma^{'})_{0}\geq d_{\alpha}$. For each
such $i_{\alpha}^{''}$ continue by induction with $X^{*}\cap((T_{l^{'}}^{i_{\alpha}^{''}})^{-1})^{''}(i_{\alpha}^{''}(\gamma^{'})_{0}+1)$,
and finally take the union of all the results. Since this is a union
of $<\kappa$ sets of size $\leq\kappa$, the result will also be
$\leq\kappa$. Finally, notice that if $a\in X^{*}$ then there is
some $\alpha$ such that $T_{l^{'}}^{i^{''}}(a)<d_{\alpha}$ so all
of $X^{*}$ will be covered as needed.
\end{proof}
\begin{lem}
\label{lem:subset-in-s'}Let $Q\in(P_{j(\kappa^{+})}j(\gamma))^{M[G\ast g\ast H]}$.
Then there are $\langle j_{\delta}|\delta<\kappa\rangle\subseteq I$
such that $Q\subseteq\cup_{\delta<\kappa}S_{j_{\delta}}^{'}$.
\end{lem}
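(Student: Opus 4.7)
The plan is to apply \lemref{great-main} after constructing an index $i^* \in I$ such that $Q \subseteq S'_{i^*}$ and, for each $q \in Q$, a witness $i_q \lessdot i^*$ with $q \in S'_{i_q}$.

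First I construct $i^*$ by iterated application of \lemref{great-inter-usable}. Starting from any $i_0 \in I$, at stage $l = 0, 1, \ldots$, I apply the lemma to $Q_l := (T_l^{i_l})^{''} Q \subseteq \gamma_l^{i_l}$ (which has cardinality $<j(\kappa^+)$) at coordinate $\alpha = \gamma_l^{i_l}$, choosing $\beta_0 \geq \sup Q_l$. Because each application alters only the single coordinate $\gamma_l^{i_l}$, the earlier $\gamma_{l'}^{i_l}$ and $T_{l'}^{i_l}$ for $l' \leq l$ are preserved, while the new $\gamma_{l+1}$ becomes $|\beta_0|^{M[G\ast g\ast H]}$. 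Since $\gamma_l^{i_l}$ is a strictly decreasing sequence that terminates at $j(\kappa^+)$, the iteration ends in finitely many steps, producing $i^*$ with $Q \subseteq S_{i^*}$ and $T_l^{i^*}(Q) \subseteq i^*(\gamma_l^{i^*})_0 + 1$ for every $l < l_{i^*}$, and hence $Q \subseteq S'_{i^*}$.

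Next, for each $q \in Q$ I construct $i_q$ by modifying $i^*$ at two coordinates. Pick an $M[G\ast g\ast H]$-cardinal $\alpha_q \in dom(i^*)$ with $\alpha_q > q$, and set $i_q(\alpha_q)_0 = i^*(\alpha_q)_0$ together with $i_q(\alpha_q)_1 = q_{I^1_{\alpha_q}}(\beta)_{\beta'_q}$, where $\beta$ is arbitrary and $\beta'_q$ satisfies $q \in p_{\alpha_q}(\beta'_q)$ (such $\beta'_q$ exists because $p_{\alpha_q}^{-1}(\{q\})$ is unbounded by \defref{p}). Property \enuref{great-contains-p} of great models then gives $q \in p_{\alpha_q}(\beta'_q) \subseteq X_{i_q} = S_{i_q}$. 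To force $i_q \lessdot i^*$, keep $i_q(j(\gamma))_0 = i^*(j(\gamma))_0$ but set $i_q(j(\gamma))_1 = 0 < i^*(j(\gamma))_1$ (positive after the first step): this makes $j(\gamma)$ the top coordinate of disagreement with $i_q(j(\gamma)) \lessdot_{j(\gamma)} i^*(j(\gamma))$ in the lex order, yielding $i_q \lessdot i^*$. Since no first component of any $\gamma_l^{i^*}$-coordinate is changed, $\gamma_l^{i_q} = \gamma_l^{i^*}$ and $T_l^{i_q} = T_l^{i^*}$ for all $l$, so $T_l^{i_q}(q) = T_l^{i^*}(q) \leq i^*(\gamma_l^{i^*})_0 = i_q(\gamma_l^{i_q})_0$ follows from $q \in S'_{i^*}$, giving $q \in S'_{i_q}$.

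Finally, \lemref{great-main} applied to $X^* = Q \subseteq S'_{i^*}$ with the witnesses $\{i_q\}_{q \in Q}$ yields $\langle i'_\alpha : \alpha < \kappa\rangle$ with $Q \subseteq \bigcup_{\alpha < \kappa} S'_{i'_\alpha}$; setting $j_\delta = i'_\delta$ completes the proof. The main obstacle is the second step: arranging $q \in S_{i_q}$ (via the $p_{\alpha_q}$-encoding) and $i_q \lessdot i^*$ (via the ordering on $I$) simultaneously, which I handle by splitting the two constraints onto distinct coordinates, using $\alpha_q$ for the containment and $j(\gamma)$ for the order; the borderline case where no cardinal strictly between $q$ and $j(\gamma)$ lies in $dom(i^*)$ (forcing $\alpha_q = j(\gamma)$) requires additional care, resolved by choosing $Q'$ in the first step with enough room below to accommodate a small $\beta'_q$ with $p_{j(\gamma)}(\beta'_q) \ni q$.
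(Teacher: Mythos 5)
There is a genuine gap in your first step. You claim that at each stage you can apply \lemref{great-inter-usable} at the coordinate $\alpha=\gamma_{l}^{i_{l}}$ ``choosing $\beta_{0}\geq\sup Q_{l}$'' where $Q_{l}=(T_{l}^{i_{l}})^{''}Q$. But that lemma only supplies $\beta_{0}$ with $i(\alpha)_{0}<\beta_{0}<\alpha$, so your choice is possible only when $Q_{l}$ is bounded in $\gamma_{l}^{i_{l}}$. By \lemref{cardinal-form} every relevant cardinal has the form $j(\kappa^{+\eta})$ with $0<\eta<\kappa$; when $\eta$ is a limit ordinal we get $M[G\ast g\ast H]\models cof(\gamma_{l}^{i_{l}})<\kappa\leq j(\kappa)$, and since $|Q_{l}|$ can be as large as $j(\kappa)$, the set $Q_{l}$ may be cofinal in $\gamma_{l}^{i_{l}}$, in which case no admissible $\beta_{0}$ exists. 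This is not a corner case: it is the entire reason the lemma asserts a cover by $\kappa$ many sets $S_{j_{\delta}}^{'}$ rather than containment in a single one. Indeed, if your step 1 went through, the constant sequence $j_{\delta}=i^{*}$ would already prove the lemma, and your steps 2 and 3 (the construction of the witnesses $i_{q}$ and the appeal to \lemref{great-main}) would be superfluous.

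The paper's proof is organized around exactly this obstruction. After arranging $Q\subseteq S_{i}$ via \lemref{great-inter-usable}, it inducts on the largest offending coordinate $\alpha$. If $M[G\ast g\ast H]\models cof(\alpha)>j(\kappa)$ then $(T_{l}^{i})^{''}Q$ is bounded and a single change of coordinate suffices, as in your argument; if $M[G\ast g\ast H]\models cof(\alpha)<\kappa$ it takes a short cofinal sequence $\langle\alpha_{\beta}\mid\beta<cof(\alpha)\rangle$, splits $Q$ into the pieces lying below each $\alpha_{\beta}$, pushes $i(\alpha)_{0}$ above each piece separately to get indices $i_{\beta}$, and recurses on each piece; the total count stays $\leq\kappa$ because it is a union of $<\kappa$ families each of size $\leq\kappa$. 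Your argument needs this splitting mechanism; without it the construction of $i^{*}$ halts at the first limit cardinal of small cofinality in $dom(i)$.
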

\begin{proof}
Start from some $i\in I$. From \lemref{great-inter-usable} we can
change $i$ and assume $Q\subseteq S_{i}$. Remember that $S_{i}^{'}=S_{i}\cap(\cap_{l<l_{i}}((T_{l}^{i})^{-1})^{''}(i(\gamma_{l}^{i})_{0}+1))$,
and denote $\alpha=max(\{\gamma_{l}^{i}|Q\nsubseteq((T_{l}^{i})^{-1})^{''}(i(\gamma_{l}^{i})_{0}+1)\}_{l<l_{i}}\})$.
Notice that if $\alpha$ is not well defined, i.e. the set is empty,
from the definition of $S_{i}^{'}$ we will get that $Q\subseteq S_{i}^{'}$
and we are done. So we can assume $\alpha$ is well defined, and we
will proceed by induction on it. Since $\alpha$ is of the form $j(\kappa^{+\eta})$
for some $0<\eta<\kappa$ (from \lemref{cardinal-form}) we have $M[G\ast g\ast H]\models cof(\alpha)<\kappa\vee cof(\alpha)>j(\kappa)$.
If $M[G\ast g\ast H]\models cof(\alpha)>j(\kappa)$ then $(T_{l}^{i})^{''}Q$
which is in $\alpha$ must be bounded, so we can use \lemref{great-inter-usable}
to change $i$ in the coordinate $\alpha$ and get $i^{'}$, such
that $Q\subseteq((T_{l}^{i^{'}})^{-1})^{''}(i^{'}(\alpha)_{0}+1)$
and $S_{i}\subseteq S_{i^{'}}$. Now we can just use the induction
for $i^{'}$. So assume $M[G\ast g\ast H]\models cof(\alpha)<\kappa$.
So an increasing sequence inside $\alpha$ with limit $\alpha$ and
of size $<\kappa$, and denote it by $\langle\alpha_{\beta}|\beta<cof(\alpha)\rangle$.
Then again from \lemref{great-inter-usable} for each $\beta<cof(\alpha)$
we can change $i$ in the coordinate $\alpha$ to get $i_{\beta}$
such that $\alpha_{\beta}<i_{\beta}(\alpha)_{0}$ and $S_{i}\subseteq S_{i_{\beta}}$.
Then continue for each $\beta$ by the induction on the set $Q\cap((T_{l}^{i})^{-1})^{''}(i_{\beta}(\alpha)_{0}+1)$,
and get some $\langle i_{\beta,\delta}|\delta<\kappa\rangle$ such
that $Q\cap((T_{l}^{i})^{-1})^{''}(i_{\beta}(\alpha)_{0}+1)\subseteq\cup_{\delta<\kappa}S_{i_{\beta,\delta}}^{'}$.
Now notice that for every $q\in Q$ since $T_{l}^{i}(q)<\alpha$ there
is some $\beta<cof(\alpha)$ such that $T_{l}^{i}(q)<\alpha_{\beta}$,
so we will have $q\in Q\cap((T_{l}^{i})^{-1})^{''}(i_{\beta}(\alpha)_{0}+1)$.
This means that $\langle i_{\beta,\delta}|\delta<\kappa,\beta<cof(\alpha)\rangle$
are the indices we looked for, and indeed since $cof(\alpha)<\kappa$
there are $\kappa$ of them.
\end{proof}
We will now build by induction a sequence $\langle f_{i}|i\in I\rangle$
of one to one partial functions from $j(\kappa)$ to $j(\gamma)$
which are all compatible. The induction will also guarantee the following
properties:
\begin{enumerate}
\item $Rng(f_{i})=S_{i}^{'}$.
\item $f_{i}\in M[G\ast g\ast H]$.
\end{enumerate}
First, for every $i\in I$, denote $A_{i}=S_{i}^{'}\cap(\cup_{i^{'}\lessdot i}S_{i^{'}}^{'})$
and $B_{i}=S_{i}^{'}\setminus A_{i}$. From \lemref{great-main} there
are some $\{i_{\alpha}^{'}|\alpha<\kappa\}$ all $\lessdot i$ such
that $A_{i}\subseteq\cup_{\alpha}S_{i_{\alpha}^{'}}^{'}$. This means
that from \lemref{M-k-sequence} we have $A_{i}\in M[G\ast g\ast H]$,
which means that also $B_{i}\in M[G\ast g\ast H]$. Since the size
of each $B_{i}$ in $M[G\ast g\ast H]$ is at most $j(\kappa)$ and
we know $|I|=\gamma=|j(\kappa)|$, we can divide $j(\kappa)$ into
$|I|$-many disjoint subsets, each one in $M[G\ast g\ast H]$, where
the $i$-th is of size $|B_{i}|_{M[G\ast g\ast H]}$, and let $t$
be the function which describes it - i.e. $t:I\mapsto P^{M[G\ast g\ast H]}(j(\kappa))$.
So assume we have built up to stage $i$, and now we want to build
$f_{i}$. Notice that previous $f_{i^{'}}$'s already define sources
for every element of $A_{i}$. Since from the induction $\forall\alpha<\kappa:\,f_{i_{\alpha}^{'}}\in M[G\ast g\ast H]$
from \lemref{M-k-sequence} we have $f_{i}^{'}=(\cup_{\alpha}f_{i_{\alpha}^{'}})|_{A_{i}}\in M[G\ast g\ast H]$.
Define the partial function $f_{i}^{''}$ to map $f_{i}^{''}(t(i)(\xi))$
to the $\xi$-th element of $B_{i}$. By induction the sources of
$f_{i}^{''}$ are all unique from previous $f_{i^{'}}$ since we are
always using $t$ to find new unused sources in $j(\kappa)$. So this
covers all of $B_{i}$, and so if we define $f_{i}=f_{i}^{'}\cup f_{i}^{''}\in M[G\ast g\ast H]$
it covers all of $S_{i}^{'}$. 

Finally, we can define $f=\cup_{i\in I}f_{i}$. Notice that $f$ is
a one to one function from $j(\kappa)$ onto $j(\gamma)$. If $Y\in(P_{j(\kappa)^{+}}(j(\gamma)))^{M[G\ast g\ast H]}$
then from \lemref{subset-in-s'} there are $\langle j_{\delta}|\delta<\kappa\rangle\subseteq I$
such that $Y\subseteq\cup_{\delta<\kappa}S_{j_{\delta}}^{'}$. Then
$Z=\cup_{\delta<\kappa}dom(f_{j_{\delta}})$ is in $M[G\ast g\ast H]$
and $Y\subseteq f^{''}Z$ so we can use \lemref{build-generic} and
we are done.

\end{document}